\newtheorem{theorem}{Theorem}[section]
\newtheorem{lemma}[theorem]{Lemma}
\newtheorem{cor}[theorem]{Corollary}
\newtheorem{conj}[theorem]{Conjecture}
\newtheorem{prob}[theorem]{Problem}
\theoremstyle{definition}
\newtheorem{definition}[theorem]{Definition}
\newtheorem{assumption}[theorem]{Assumption}
\newtheorem{remark}[theorem]{Remark}
\renewcommand{\subset}{\subseteq}
\renewcommand{\supset}{\supseteq}
\renewcommand{\epsilon}{\varepsilon}
\newcommand{\abs}[1]{\left|#1\right|}                   % Absolute value notation
\newcommand{\vnorm}[1]{\left\|#1\right\|}    % norm notation
\newcommand{\vnormf}[1]{\|#1\|}                         % norm notation, forced to be small
\newcommand{\vnormt}[1]{\left\|#1\right\|}    % norm notation
\newcommand{\E}{\mathbb{E}}
\renewcommand{\L}{L}
\newcommand{\mL}{\mathcal{L}}
\newcommand{\R}{\mathbb{R}}
\newcommand{\embolden}[1]{\textbf {#1}}
\newcommand{\redA}{\Sigma}
\newcommand{\redb}{\partial^{*}}
\newcommand{\redS}{\partial^{*}\Sigma}
\newcommand{\sdimn}{n}
\newcommand{\adimn}{n+1}
\newcommand{\scon}{\lambda}
\newcommand{\pcon}{\delta}
\begin{document}

\title{The Structure of Gaussian Minimal Bubbles}

\author{Steven Heilman}
\address{Department of Mathematics, University of Southern California, Los Angeles, CA 90089-2532}
\email{stevenmheilman@gmail.com}
\date{\today}
\thanks{Supported by NSF Grant DMS 1829383.}
%60E15, 60G15, 53A10, 58E30
\subjclass[2010]{60E15, 53A10, 60G15, 58E30}
\keywords{Gaussian, bubble, minimal surface, calculus of variations}

\begin{abstract}
It is shown that $m$ disjoint sets with fixed Gaussian volumes that partition $\mathbb{R}^{n}$ with minimum Gaussian surface area must be $(m-1)$-dimensional.  This follows from a second variation argument using infinitesimal translations.  The special case $m=3$ proves the Double Bubble problem for the Gaussian measure, with an extra technical assumption.  That is, when $m=3$, the three minimal sets are adjacent $120$ degree sectors.  The technical assumption is that the triple junction points of the minimizing sets have polynomial volume growth.  Assuming again the technical assumption, we prove the $m=4$ Triple Bubble Conjecture for the Gaussian measure.  Our methods combine the Colding-Minicozzi theory of Gaussian minimal surfaces with some arguments used in the Hutchings-Morgan-Ritor\'{e}-Ros proof of the Euclidean Double Bubble Conjecture.
\end{abstract}
\maketitle
%
%
% arxiv subjects: math.DG, math.PR, cs.CC?
%
%  MSC:    60E15, 60G15, 53A10, 58E30
%
%  keywords: Gaussian, bubble, minimal surface, calculus of variations
%
% 35? pages, 4 figures

\section{Introduction}\label{secintro}

%\snote{for future; try doing stability version for three equal volumes; then try to generalize to unequal volumes.}
%\snote{can i make the same statements for colding-minicozzi entropy?  might not be able to.}

Classical isoperimetric theory asks for the minimum total Euclidean surface area of $m$ disjoint volumes in $\R^{\adimn}$.  The case $m=1$ results in the Euclidean ball.  That is, a Euclidean ball has the smallest Euclidean surface area among all (measurable) sets of fixed Lebesgue measure.  The case $m=2$ is the Double Bubble Problem, solved in \cite{hutchings02,reichardt08}.  The case $m\geq3$ is still open, except for the special case $m=3,\adimn=2$ \cite{wichiramala04}.  As Hutchings writes on his website\footnote{\href{math.berkeley.edu/~hutching/pub/bubbles.html}{math.berkeley.edu/$\sim$hutching/pub/bubbles.html}} concerning the $m=3,\adimn=3$ case, ``The triple bubble problem in $\R^{3}$ currently seems hopeless without some brilliant new idea.''

Recent results in theoretical computer science, such as sharp hardness for the MAX-$m$-CUT problem \cite{khot07,isaksson11} motivate the above isoperimetric problem with Lebesgue measure replaced with the Gaussian measure \cite{isaksson11}.  Also, the ``plurality is stablest'' conjecture from social choice theory is closely related to such an isoperimetric problem.  This problem \cite{isaksson11} says that if votes are cast in an election between $m$ candidates, if every candidate has an equal chance of winning, and if no one person has a large influence on the outcome of the election, then taking the plurality is the most noise-stable way to determine the winner of the election.  That is, plurality is the voting method where the outcome is least likely to change due to independent, uniformly random changes to the votes.  The latter conjecture is a generalization of the ``majority is stablest conjecture'' \cite{mossel10}, which was proven using (a generalization of) the Gaussian $m=1$ case of the isoperimetric problem posed above.

In the Gaussian setting, for convenience, we include the complement of the $m$ volumes as a set itself.  That is, in the Gaussian setting, we ask for the minimum total Gaussian surface area of $m$ disjoint volumes in $\R^{\adimn}$ whose union is all of $\R^{\adimn}$.  The case $m=1$ is then vacuous.  The case $m=2$ results in two half spaces.  That is, a set $\Omega\subset\R^{\adimn}$ lying on one side of a hyperplane has the smallest Gaussian surface area $\int_{\partial\Omega}\gamma_{\sdimn}(x)dx$ among all (measurable) sets of fixed Gaussian measure $\int_{\Omega}\gamma_{\adimn}(x)dx$ \cite{sudakov74}.  Here, $\forall$ $k\geq1$, we define
\begin{flalign*}
\gamma_{k}(x)&\colonequals (2\pi)^{-k/2}e^{-\vnormt{x}^{2}/2},\qquad
\langle x,y\rangle\colonequals\sum_{i=1}^{\adimn}x_{i}y_{i},\qquad
\vnormt{x}^{2}\colonequals\langle x,x\rangle,\\
&\qquad\forall\,x=(x_{1},\ldots,x_{\adimn}),y=(y_{1},\ldots,y_{\adimn})\in\R^{\adimn}.
\end{flalign*}
This Gaussian isoperimetric result \cite{sudakov74} in the case $m=2$ has been elucidated and strengthened over the years \cite{borell85,ledoux94,ledoux96,bobkov97,burchard01,borell03,mossel15,mossel12,eldan13,mcgonagle15,barchiesi16}.
The case $m=3$ is the Gaussian Double Bubble Problem, solved when the sets $\Omega_{1},\Omega_{2},\Omega_{3}\subset\R^{\adimn}$ both have Gaussian measures close to $1/3$ \cite{corneli08}.  The result \cite{corneli08} requires the solution of the Double Bubble problem on a sphere of arbitrary dimension, so their methods seem difficult to apply to the case $m\geq3$.  Independent of the present work, the Gaussian case $m=3$ was recently resolved unconditionally in \cite{milman18a}.

Recent proofs of the Gaussian $m=2$ case \cite{mcgonagle15,barchiesi16} have used the calculus of variations techniques, originating with the related work of Colding and Minicozzi \cite{colding12a}.  We will focus on such calculus of variations techniques in this work, since they show the most promise for resolving the case $m\geq3$.  Such techniques have been applied to related problems in \cite{heilman15,heilman17}.  The work \cite{milman18a} also uses calculus of variations techniques, combined with a second-order matrix-valued differential inequality.  Instead of using volume-preserving variations of sets, the approach of \cite{milman18a} considers arbitrary perturbations of sets, allowing the evaluation of all minimizing sets for all measure restrictions, simultaneously.  The differential inequality infinitesimally ``pieces together'' minimizing sets of different measure restrictions, and ultimately solves the problem via a maximum principle.  In our approach, we instead mostly focus on volume-preserving variations of sets.

\begin{remark}
Unless otherwise stated, all Euclidean sets in this work are assumed to be Lebesgue measurable.  In Section \ref{intsec} below, we ignore technical issues such as non-compactness and regularity of sets, for didactic purposes.
\end{remark}

\subsection{Gaussian Isoperimetry for One Set}\label{intsec}

It follows from the work \cite{colding12a}, as recounted in \cite{mcgonagle15,barchiesi16}, that if $\Omega\subset\R^{\adimn}$ minimizes $\int_{\partial\Omega}\gamma_{\sdimn}(x)dx$ among all subsets of fixed Gaussian volume $\int_{\Omega}\gamma_{\adimn}(x)dx$, then $\exists$ $\lambda\in\R$ such that the boundary of $\Omega$ satisfies the following first variation condition:
\begin{equation}\label{zero0}
H(x)=\langle x,N(x)\rangle+\lambda,\qquad\forall\,x\in\partial\Omega.
\end{equation}
Here $N(x)$ is the exterior unit normal vector of $\Omega$ at $x\in\partial\Omega$ (satisfying $\vnormt{N(x)}=1$), and $H(x)$ is the mean curvature of $\partial\Omega$ at $x\in\partial\Omega$, i.e. the divergence of $N(x)$.  (See Section \ref{seccurvature} for more detailed definitions.)  Furthermore, the following second variation condition holds:
$$\forall\,f\colon\partial\Omega\to\R\,\,\mathrm{such}\,\,\mathrm{that}\,\,\int_{\partial\Omega}f(x)\gamma_{\sdimn}(x)dx=0,
\qquad\int_{\partial\Omega}f(x)Lf(x)\gamma_{\sdimn}(x)dx\leq0,$$
where $L$ is a second-order elliptic differential operator on $\partial\Omega$ (defined in \eqref{three4.5}), similar to the Ornstein-Uhlenbeck operator.

So, the problem of classifying the sets $\Omega$ of fixed Gaussian volume and minimal Gaussian surface area reduces to investigating the spectrum (and eigenfunctions) of a self-adjoint differential operator $L$ on $\partial\Omega$.  Fortunately, as shown in \cite{colding12a,mcgonagle15,barchiesi16}, there is a large class of eigenfunctions of $L$ with eigenvalue $1$.  That is,
\begin{equation}\label{zero1}
\forall\,v\in\R^{\adimn},\qquad L\langle v,N(x)\rangle=\langle v,N(x)\rangle,\qquad\forall\,x\in\partial\Omega.
\end{equation}
Combining this equality with the second variation condition, we see that
\begin{equation}\label{zero11}
\forall\,v\in\R^{\adimn}\,\,\mathrm{such}\,\,\mathrm{that}\,\,\int_{\partial\Omega}\langle v,N(x)\rangle\gamma_{\sdimn}(x)dx=0,
\qquad\int_{\partial\Omega}\langle v,N(x)\rangle^{2}\gamma_{\sdimn}(x)dx\leq0.
\end{equation}
If we define $V\colonequals\{v\in\R^{\adimn}\colon\int_{\partial\Omega}\langle v,N(x)\rangle\gamma_{\sdimn}(x)dx=0\}$, then \eqref{zero11} implies
$$\forall\,v\in V,\,\forall\,x\in\partial\Omega,\quad\langle v,N(x)\rangle=0.$$
Since $V\subset\R^{\adimn}$ is a linear subspace of dimension at least $\sdimn$, we conclude that $\partial\Omega$ consists of a set of parallel hyperplanes.  From \eqref{zero0}, $\partial\Omega$ must in fact consist of at most two hyperplanes, since $H(x)=0$ and $N(x)$ takes at most two values for all $x\in\partial\Omega$, so \eqref{zero0} gives the equation of at most two hyperplanes.  One can then deduce that in fact $\partial\Omega$ consists of a single hyperplane.

The key of the above argument was the implication \eqref{zero0} implies \eqref{zero1}.  That is, it is crucial to find a large class of eigenfunctions of $L$ with positive eigenvalues.  In the Gaussian setting, this argument seems to have first appeared in \cite{colding12a}, and it was used in several subsequent works, e.g. \cite{colding12,mcgonagle15,guang15,cheng14,cheng15,cheng16,barchiesi16,zhu16,heilman17}.  In the Euclidean setting, an investigation of the eigenfunctions of the second variation of minimal surfaces seems to have originated in the work of Simons \cite{simons68} on the stability of Euclidean minimal cones, reappearing elsewhere such as \cite{barbosa84} and \cite{hutchings02}.

One might ask why \eqref{zero1} would be expected to hold, and if such an identity is specific to the Gaussian setting.  Let $\Omega\subset\R^{\adimn}$, and let $v\in\R^{\adimn}$.  Suppose we translate $\Omega$ in the direction $tv$ for any $t>0$, forming the set $\Omega+tv$.  By formally taking a derivative at $t=0$, we see that $x\in\partial\Omega$ is translated to the point $x+t\langle v,N(x)\rangle N(x)+O_{x}(t^{2})$.  In this way, the function $x\mapsto\langle v,N(x)\rangle$ corresponds to an infinitesimal translation of the set $\Omega$ in Euclidean space.  The Gaussian measure is not translation invariant.  However, for any $v\in\R^{\adimn}$, the function $x\mapsto e^{-\vnorm{x}_{2}^{2}/2}$, $x\in\R^{\adimn}$ can be written as $x\mapsto e^{-\abs{\langle x,v\rangle}^{2}\vnorm{v}_{2}^{2}/2}e^{-\vnorm{x-\langle x,v\rangle v}_{2}^{2}/2}$.  And the second term in the product is invariant under translation by $v$.  Intuitively, this property of ``translation invariance up to one-dimensional distributions'' leads to the identity \eqref{zero1}, and also demonstrates that \eqref{zero1} seems unique to the Gaussian measure.  (Recall that (general) Gaussian measures are the unique rotation invariant product probability measures on Euclidean space.)
% x=<x,v>v+(x-<x,v>v)

In the Euclidean setting, infinitesimal translations and infinitesimal rotations also yield eigenfunctions of the corresponding operator $L$, albeit with eigenvalue $0$.  This observation was one key ingredient in the proof of the Euclidean Double Bubble Conjecture \cite{hutchings02}.

\subsection{Gaussian Isoperimetry of Multiple Sets}

It is an elementary but important observation that the above argument provides a nontrivial conclusion for multiple sets.  Suppose $\Omega_{1},\ldots\Omega_{m}\subset\R^{\adimn}$ are disjoint sets with fixed Gaussian volumes $\gamma_{\adimn}(\Omega_{1}),\ldots,\gamma_{\adimn}(\Omega_{m})$ with $\sum_{i=1}^{m}\gamma_{\adimn}(\Omega_{i})=1$ and of minimal Gaussian surface area
$$\sum_{1\leq i<j\leq m}\int_{(\partial\Omega_{i})\cap(\partial\Omega_{j})}\gamma_{\sdimn}(x)dx.$$
Then the above first and second variation arguments still hold, with suitable modifications (see Lemmas \ref{lemma24}, \ref{lemma21} and \ref{lemma28}).  For any $1\leq i<j\leq m$, there exists $\lambda_{ij}\in\R$ such that
\begin{equation}\label{zero0p}
H_{ij}(x)=\langle x,N_{ij}(x)\rangle+\lambda_{ij},\qquad\forall\,x\in(\partial\Omega_{i})\cap(\partial\Omega_{j}).
\end{equation}
As above, $N_{ij}(x)$ is the unit normal vector of $\Omega_{i}$ pointing into $\Omega_{j}$ at $x\in(\partial\Omega_{i})\cap(\partial\Omega_{j})$ and $H_{ij}(x)$ is the mean curvature of $\partial\Omega_{i}$ at $x\in(\partial\Omega_{i})\cap(\partial\Omega_{j})$, or the divergence of $N_{ij}(x)$.  (See Section \ref{seccurvature} and Definition \ref{defnote} for more detailed definitions.)  (See also Figure \ref{notationfig}.)  An appropriate generalization of the second variation formula now holds (see Lemma \ref{lemma28}).  For simplicity of exposition, we restrict our attention now to the second variation of linear functions of the normal vector:
\begin{flalign*}
&\forall\,v\in\R^{\adimn}\,\,\mathrm{such}\,\,\mathrm{that}\,\,\forall\,1\leq i\leq m,\,\int_{\partial\Omega_{i}}\sum_{j\in\{1,\ldots,m\}\colon j\neq i}\langle v,N_{ij}(x)\rangle\gamma_{\sdimn}(x)dx=0,\\
%&\mathrm{and}\,\,\mathrm{such}\,\,\mathrm{that}\,\,\forall\,1\leq i<j<k\leq m,\,\, f_{ij}(x)+f_{jk}(x)+f_{ki}(x)=0\,\,\forall\,x\in(\partial\Omega_{i})\cap(\partial\Omega_{j})\cap(\partial\Omega_{k}),\\
&\qquad\qquad\qquad\qquad\sum_{1\leq i<j\leq m}\int_{\partial\Omega_{i}}f(x)L_{ij}f(x)\gamma_{\sdimn}(x)dx\leq 0,
\end{flalign*}
where $L_{ij}$ is a second-order elliptic differential operator on $(\partial\Omega_{i})\cap(\partial\Omega_{j})$ defined in a similar way to $L$ (in \eqref{three4.5} or \eqref{one3.5n}).  Just as \eqref{zero0} implies \eqref{zero1}, \eqref{zero0p} implies
\begin{equation}\label{zero1p}
\forall\,v\in\R^{\adimn},\,\forall\,1\leq i<j\leq m,\qquad L_{ij}\langle v,N_{ij}(x)\rangle=\langle v,N_{ij}(x)\rangle,\qquad\forall\,x\in(\partial\Omega_{i})\cap(\partial\Omega_{j}).
\end{equation}
Combining this equality with the second variation condition, we see that
\begin{equation}\label{zero2p}
\begin{aligned}
&\forall\,v\in\R^{\adimn}\,\,\mathrm{such}\,\,\mathrm{that}\,\,\forall\,1\leq i\leq m,\,\int_{\partial\Omega_{i}}\sum_{j\in\{1,\ldots,m\}\colon j\neq i}\langle v,N_{ij}(x)\rangle\gamma_{\sdimn}(x)dx=0,\\
&\qquad\qquad\qquad\qquad\sum_{1\leq i<j\leq m}\int_{\partial\Omega_{i}}\langle v,N_{ij}(x)\rangle^{2}\gamma_{\sdimn}(x)dx\leq0.
\end{aligned}
\end{equation}

Let $V\colonequals\{v\in\R^{\adimn}\colon\int_{\partial\Omega_{i}}\sum_{j\in\{1,\ldots,m\}\colon j\neq i}\langle v,N_{ij}(x)\rangle\gamma_{\sdimn}(x)dx=0,\,\forall\,1\leq i\leq m\}$.  By \eqref{zero2p},
$$\forall\,v\in V,\,\forall\,1\leq i<j\leq m,\,\forall\,x\in(\partial\Omega_{i})\cap(\partial\Omega_{j}),\quad\langle v,N_{ij}(x)\rangle=0.$$
Since $V\subset\R^{\adimn}$ is a linear subspace of dimension at least $\sdimn-m+2$ (when $m\geq2$), after rotating $\Omega_{1},\ldots\Omega_{m}$, there exist $\Omega_{1}',\ldots,\Omega_{m}'\subset\R^{m-1}$ such that
$$\Omega_{i}=\Omega_{i}'\times\R^{\sdimn-m+2},\qquad\forall\,1\leq i\leq m.$$
That is, $\Omega_{1},\ldots\Omega_{m}$ are $(m-1)$-dimensional.

As in the case $m=2$, the implication \eqref{zero0p} implies \eqref{zero1p} is crucial for this argument.  In particular, this argument needs many eigenvectors of $(L_{ij})_{1\leq i<j\leq m}$.  This observation, that Colding-Minicozzi theory (i.e. the implication \eqref{zero0p} implies \eqref{zero1p}) applies to multiple sets, is the starting point of our investigation.

In certain cases, one can assert, as in \cite{colding12a}, that an eigenfunction of $(L_{ij})_{1\leq i<j\leq m}$ exists with eigenvalue greater than $1$.  This eigenfunction may have a nonzero integral on the boundary of some set $\Omega_{i}$ for some $1\leq i\leq m$, in which case the perturbation of sets induced by this eigenfunction does not preserve the Gaussian volumes of the sets $\Omega_{1},\ldots,\Omega_{m}$.  But this problem can be fixed by adding some other eigenfunctions of $(L_{ij})_{1\leq i<j\leq m}$, and using orthogonality of eigenfunctions with different eigenvalues in Lemma \ref{rkorth}.  This observation was also used in \cite{colding12a}.

We now state our main problem of interest more formally.

\begin{prob}[\embolden{Gaussian Multi-Bubble Problem}, {\cite{hutchings97,hutchings02,corneli08}}]\label{prob1}
Let $m\geq3$.  Fix $a_{1},\ldots,a_{m}>0$ such that $\sum_{i=1}^{m}a_{i}=1$.  Find measurable sets $\Omega_{1},\ldots\Omega_{m}\subset\R^{\adimn}$ with $\cup_{i=1}^{m}\Omega_{i}=\R^{\adimn}$ and $\gamma_{\adimn}(\Omega_{i})=a_{i}$ for all $1\leq i\leq m$ that minimize
$$\sum_{1\leq i<j\leq m}\int_{(\partial\Omega_{i})\cap(\partial\Omega_{j})}\gamma_{\sdimn}(x)dx,$$
subject to the above constraints.
\end{prob}

\begin{figure}[h]
\centering
\def\svgwidth{.4\textwidth}
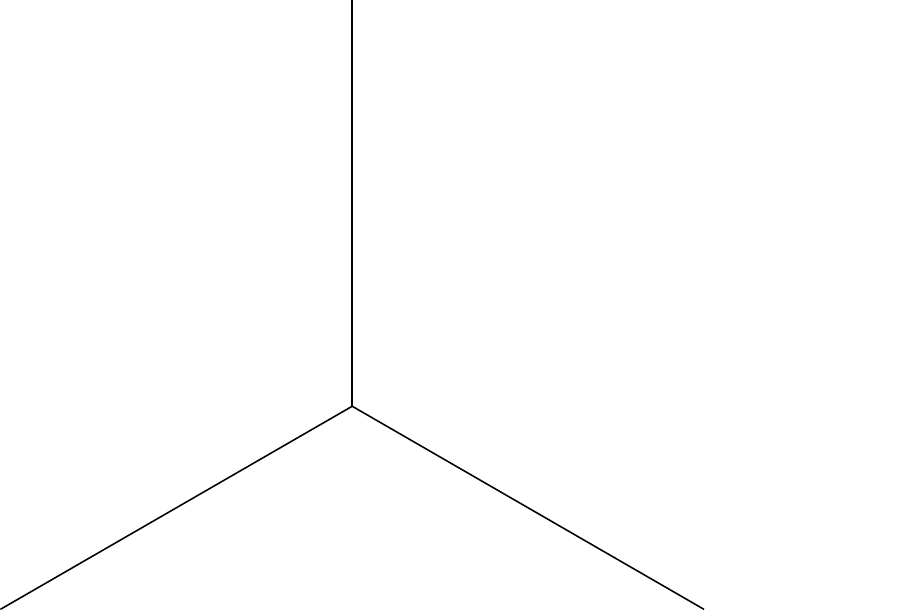
\caption{Optimal Sets for Conjecture \ref{conj0} in the case $m=3$, $\adimn=2$.}
\end{figure}

\begin{conj}[\embolden{Gaussian Multi-Bubble Conjecture} {\cite{hutchings02,corneli08,isaksson11}}]\label{conj0}
Let $\Omega_{1},\ldots\Omega_{m}\subset\R^{\adimn}$ minimize Problem \ref{prob1}.  Let $z_{1},\ldots,z_{m}\in\R^{\adimn}$ be the vertices of a regular simplex in $\R^{\adimn}$ centered at the origin.  Then $\exists$ $y\in\R^{\adimn}$ such that, for all $1\leq i\leq m$,
$$\Omega_{i}=y+\{x\in\R^{\adimn}\colon\langle x,z_{i}\rangle=\max_{1\leq j\leq m}\langle x,z_{j}\rangle\}.$$
\end{conj}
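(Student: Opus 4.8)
The plan is to run the two-step strategy suggested by the discussion above. First, use the calculus of variations --- in the spirit of Colding--Minicozzi, but applied simultaneously to all interfaces of the cluster --- to show that any minimizer of Problem \ref{prob1} must be $(m-1)$-dimensional, i.e.\ splits off a Euclidean factor $\R^{\sdimn-m+2}$. Second, for the small values $m=3$ and $m=4$, classify the remaining $(m-1)$-dimensional minimizer by combining the Euler--Lagrange curvature identity \eqref{zero0p} with the balancing arguments used in the Hutchings--Morgan--Ritor\'{e}--Ros solution of the Euclidean Double Bubble problem.

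For the first step I would proceed as follows. Using existence and regularity theory for perimeter-minimizing clusters (adapted to the Gaussian weight), a minimizer $\Omega_{1},\ldots,\Omega_{m}$ has interfaces $\Sigma_{ij}\colonequals(\partial\Omega_{i})\cap(\partial\Omega_{j})$ that are smooth hypersurfaces away from a singular set of lower dimension, and the first variation under volume-preserving deformations produces constants $\lambda_{ij}$ satisfying \eqref{zero0p}. Differentiating the energy a second time yields the stability inequality with the Jacobi-type operators $L_{ij}$. The key computational step is that \eqref{zero0p} forces $L_{ij}\langle v,N_{ij}\rangle=\langle v,N_{ij}\rangle$ for every $v\in\R^{\adimn}$; this is exactly where the Gaussian is special, via the factorization of $e^{-\vnormt{x}^{2}/2}$ into a one-dimensional factor in the direction $v$ and a factor invariant under translation by $v$. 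Feeding the functions $x\mapsto\langle v,N_{ij}(x)\rangle$ into the stability inequality, with $v$ ranging over the subspace $V$ of dimension at least $\sdimn-m+2$ on which all the volume derivatives vanish --- and using orthogonality of eigenfunctions with distinct eigenvalues (as in Lemma \ref{rkorth}) to absorb any volume drift --- forces $\langle v,N_{ij}(x)\rangle=0$ for all such $v$ and all $x\in\Sigma_{ij}$. Hence every interface, and thus every $\Omega_{i}$, is invariant under translation by $V$, giving $\Omega_{i}=\Omega_{i}'\times\R^{\sdimn-m+2}$ with $\Omega_{i}'\subset\R^{m-1}$.

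For the second step, after reducing to a minimizing cluster in $\R^{m-1}$, I would use regularity theory for planar ($m=3$) and spatial ($m=4$) clusters: the interfaces meet three at a time along $120^{\circ}$ junctions, with \eqref{zero0p} constraining their curvatures, and in $\R^{3}$ the triple curves meet four at a time with tetrahedral angles. Then, as in \cite{hutchings02}, I would combine a force/balancing analysis at the junctions with the Gaussian first-variation identity to rule out curved interfaces and non-standard configurations, concluding that the cluster is a translate of the standard regular simplicial cone; the translation vector $y$ is then determined by the prescribed volumes $a_{1},\ldots,a_{m}$.

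The main obstacle --- and the source of the polynomial volume growth hypothesis on the triple junction set --- is making the variational arguments rigorous near the singular set. Applying the stability inequality with the test fields $\langle v,N_{ij}\rangle$ requires integration by parts over the interfaces, and the boundary terms concentrated on the triple junctions must be shown to vanish or to carry a favorable sign; this needs a quantitative decay of Gaussian measure near those junctions, which is precisely what the polynomial volume growth assumption provides. A secondary obstacle is the geometric endgame in $\R^{2}$ and $\R^{3}$: excluding curved interfaces and non-regular cones is delicate, and is where the Hutchings--Morgan--Ritor\'{e}--Ros balancing arguments, rather than Colding--Minicozzi theory alone, are essential. For $m\geq5$ this classification step appears to require a genuinely new idea and is left open.
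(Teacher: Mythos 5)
Your first step is essentially the paper's Theorem \ref{thm0} and is sound, with one correction: for the linear test functions $\langle v,N_{ij}\rangle$ with $v$ in the kernel of the volume-derivative map $T$, the variation is automatically volume-preserving and the junction boundary terms vanish because the generating vector field is the constant $v$ (Remark \ref{rk30}, Lemma \ref{lemma29.9}). No orthogonality lemma and no decay hypothesis near the singular set are needed there, which is exactly why the dimension reduction is unconditional. You have therefore mislocated the role of the polynomial volume growth hypothesis: Assumption \ref{as2} is not about boundary terms for the translation eigenfunctions, but about the near-orthogonality $\langle F,G\rangle\approx 0$ between a fundamental-tone eigenfunction $F$ (eigenvalue $\pcon>1$) and a linear eigenfunction $G$ (eigenvalue $1$) on a non-compact surface with boundary (Lemma \ref{rkorth}).

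The genuine gap is in your second step. The Hutchings--Morgan--Ritor\'{e}--Ros balancing and symmetrization arguments rely on constant-mean-curvature interfaces and on translation invariance of Lebesgue measure; here $H_{ij}=\langle x,N_{ij}\rangle+\lambda_{ij}$ is not constant, and the paper explicitly notes that adapting that route seems difficult if not impossible. What actually rules out curved interfaces is the Structure Theorem Dichotomy (Theorem \ref{thm1}): if some interface is curved then $\pcon>1$, one extracts a Dirichlet eigenfunction $F$ with $\L F=\pcon F$ (Lemma \ref{lemma33}), restores the volume constraint by adding a suitable $G=\langle v,N_{\cdot}\rangle$ (possible precisely because the cluster is genuinely $(m-1)$-dimensional), and uses Lemma \ref{rkorth} to conclude $Q(F+G,F+G)<0$, contradicting minimality. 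Your outline contains no substitute for this mechanism. Moreover, even once all interfaces are flat, you still must select the simplicial cones over the competing flat configurations (parallel slabs for $m=3$; several cases for $m=4$); the paper does this by a connected-component count plus the Milman--Neeman differentiation argument ($J(a)$, $I''\leq-2\pi K^{-1}$, Section \ref{sectrip}), another substantive ingredient absent from your plan. Finally, as you concede, neither route proves the conjecture as stated: the paper itself establishes it only for $m=3,4$ and only under Assumption \ref{as2}.
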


We sometimes refer to sets $\Omega_{1},\ldots,\Omega_{m}$ that minimize Problem \ref{prob1} as \textbf{Gaussian minimal bubbles}.  Our first result follows from the argument sketched above for \eqref{zero2p}.

\begin{theorem}[\embolden{Dimension Reduction for Gaussian Minimal Bubbles}]\label{thm0}
Suppose $\Omega_{1},\ldots\Omega_{m}\subset\R^{\adimn}$ minimize Problem \ref{prob1}.  Then there exists $0\leq \ell\leq m-1$ and there exist $\Omega_{1}',\ldots,\Omega_{m}'\subset\R^{\ell}$ such that, after rotating $\Omega_{1},\ldots,\Omega_{m}$, we have
$$\Omega_{i}=\Omega_{i}'\times\R^{\sdimn-\ell+1}.$$
Moreover $\ell$ can be chosen to be the dimension of the span of the following $m$ vectors in $\R^{\adimn}$
$$\int_{\Omega_{1}}x\gamma_{\adimn}(x)dx,\ldots,\int_{\Omega_{m}}x\gamma_{\adimn}(x)dx.$$
\end{theorem}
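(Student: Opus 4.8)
The plan is to make rigorous the heuristic derivation of \eqref{zero2p} given in the introduction and then to compute the subspace $V$ that appears there. Fix $v\in\R^{\adimn}$ and let $f$ be the function which on each interface $\Sigma_{ij}:=(\partial\Omega_{i})\cap(\partial\Omega_{j})$ equals $x\mapsto\langle v,N_{ij}(x)\rangle$. Since the $120$-degree condition imposes the fixed linear relation $N_{ij}+N_{jk}+N_{ki}=0$ among the normals along the triple-junction set of any three of the $\Omega_{i}$, this $f$ satisfies the matching condition at the singular set required of an admissible test function in the second variation formula of Lemma \ref{lemma28}; indeed $f$ is nothing but the normal component of the rigid infinitesimal translation $x\mapsto x+tv$ of the whole configuration. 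By the first variation condition \eqref{zero0p} (Lemmas \ref{lemma24} and \ref{lemma21}) together with the Colding--Minicozzi identity \eqref{zero1p}, one has $L_{ij}f=f$ on $\Sigma_{ij}$ for every $i<j$. Substituting $f$ into the second variation inequality therefore gives \eqref{zero2p}: setting
\[
V:=\Big\{v\in\R^{\adimn}:\ \int_{\partial\Omega_{i}}\sum_{j\in\{1,\ldots,m\}\colon j\neq i}\langle v,N_{ij}(x)\rangle\gamma_{\sdimn}(x)\,dx=0,\ \ \forall\,1\le i\le m\Big\},
\]
for every $v\in V$ we obtain $\sum_{i<j}\int_{\Sigma_{ij}}\langle v,N_{ij}(x)\rangle^{2}\gamma_{\sdimn}(x)\,dx\le 0$, hence $\langle v,N_{ij}(x)\rangle=0$ for almost every $x\in\Sigma_{ij}$ and every $i<j$.

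Next I would identify $V$. Up to a set of $\sdimn$-dimensional measure zero one has $\partial\Omega_{i}=\bigcup_{j\neq i}\Sigma_{ij}$, with the exterior unit normal of $\Omega_{i}$ along $\Sigma_{ij}$ equal to $N_{ij}$; applying the divergence theorem on $\Omega_{i}$ to the vector field $x\mapsto v\,e^{-\vnormt{x}^{2}/2}$ then yields
\[
\int_{\partial\Omega_{i}}\sum_{j\neq i}\langle v,N_{ij}(x)\rangle\gamma_{\sdimn}(x)\,dx=-(2\pi)^{1/2}\Big\langle v,\ \int_{\Omega_{i}}x\,\gamma_{\adimn}(x)\,dx\Big\rangle.
\]
Writing $b_{i}:=\int_{\Omega_{i}}x\,\gamma_{\adimn}(x)\,dx$, this identifies $V=(\mathrm{span}\{b_{1},\ldots,b_{m}\})^{\perp}$, so $\dim V=\adimn-\ell$ where $\ell:=\dim\mathrm{span}\{b_{1},\ldots,b_{m}\}$. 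Since $\cup_{i=1}^{m}\Omega_{i}=\R^{\adimn}$ and the sets are essentially disjoint, $\sum_{i=1}^{m}b_{i}=\int_{\R^{\adimn}}x\,\gamma_{\adimn}(x)\,dx=0$, so the $b_{i}$ span an at most $(m-1)$-dimensional subspace and $0\le\ell\le m-1$, i.e.\ $\dim V\ge\sdimn-m+2$. This is already the ``moreover'' clause.

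Finally I would extract the product structure. By the first paragraph, for every $v\in V$ the measure-theoretic outer normal of $\Omega_{i}$ is orthogonal to $v$ almost everywhere on $\partial^{*}\Omega_{i}$; since $D\mathbf 1_{\Omega_{i}}$ is carried by $\partial^{*}\Omega_{i}$ with density this normal, it follows that the distributional directional derivative $\partial_{v}\mathbf 1_{\Omega_{i}}$ vanishes for every $v\in V$. Hence, after modifying $\Omega_{i}$ on a Lebesgue-null set, $\mathbf 1_{\Omega_{i}}$ is independent of the coordinates along $V$, so there is a set $\Omega_{i}'\subset V^{\perp}=\mathrm{span}\{b_{1},\ldots,b_{m}\}$ with $\Omega_{i}=\Omega_{i}'+V$. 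Choosing a rotation of $\R^{\adimn}$ carrying $\mathrm{span}\{b_{1},\ldots,b_{m}\}$ onto $\R^{\ell}\times\{0\}$ and $V$ onto $\{0\}\times\R^{\sdimn-\ell+1}$ (a rotation preserves both the Gaussian measure and the Gaussian surface area, so the rotated sets still minimize Problem \ref{prob1}) turns this into $\Omega_{i}=\Omega_{i}'\times\R^{\sdimn-\ell+1}$ with $\Omega_{i}'\subset\R^{\ell}$, which is the claim.

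The chain of implications above is short; the substantive work, which I would defer to the existence and regularity theory developed later in the paper (Lemmas \ref{lemma24}, \ref{lemma21}, \ref{lemma28} and the surrounding results), is to ensure that Problem \ref{prob1} has a minimizing partition whose interfaces are smooth away from a controlled singular set, that the first and second variation statements \eqref{zero0p}, \eqref{zero1p} and the inequality preceding \eqref{zero2p} remain valid in the presence of triple junctions and the non-compactness of $\R^{\adimn}$, and that $x\mapsto\langle v,N_{ij}(x)\rangle$ is genuinely an admissible test function there. A secondary, routine matter is to justify the Gaussian integration by parts in the second step and the passage from $\partial_{v}\mathbf 1_{\Omega_{i}}=0$ to the product structure in the third step within the framework of sets of finite perimeter, which follows from De Giorgi's structure theorem once the normal is known to avoid the directions of $V$.
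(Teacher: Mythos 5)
Your proposal is correct and follows essentially the same route as the paper's proof: test the second variation with the infinitesimal translation $f_{ij}=\langle v,N_{ij}\rangle$ (admissible since $N_{ij}+N_{jk}+N_{ki}=0$ at triple junctions, with $L_{ij}f_{ij}=f_{ij}$ and vanishing boundary term), conclude $\langle v,N_{ij}\rangle\equiv 0$ for all $v$ in the null space $V$, identify $V$ as the orthogonal complement of the span of the Gaussian barycenters via the divergence theorem, and read off the cylindrical structure. The only cosmetic difference is that you bound $\dim V$ from below via $\sum_i b_i=0$ whereas the paper argues dually that the image of the map $v\mapsto T(v)$ has coordinates summing to zero; the points you defer (cutoffs for non-compactness, admissibility across the singular set) are exactly what the paper's Lemmas \ref{lemma27}, \ref{lemma28}, \ref{lemma29.9} and \ref{lemma97} supply.
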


Theorem \ref{thm0} can be considered a Gaussian analogue of Hutchings' Symmetry Theorem for Euclidean minimizing bubbles \cite{hutchings97}.  In \cite[Theorem 2.6]{hutchings97}, it is shown that if $m\leq\sdimn$ sets in $\R^{\adimn}$ have fixed Euclidean volumes and minimum total Euclidean surface area, then there exists a linear subspace $P\subset\R^{\adimn}$ of dimension $(m-1)$ such that all of the sets are invariant under any isometry of $\R^{\adimn}$ that fixes $P$.  That is, all of the sets are symmetric across $P$.  In particular, when $m=2\leq\sdimn$, any minimal Euclidean double bubble is symmetric with respect to rotations along a fixed line.

The proof of Hutchings \cite[Theorem 2.6]{hutchings97} relies on an inductive version of Euclidean symmetrization together with the Ham Sandwich Theorem, for bisecting $m$ Euclidean sets with a hyperplane.  In particular, the proof uses translation invariance of Lebesgue measure.  So, an adaptation of the methods of \cite[Theorem 2.6]{hutchings97} to prove Theorem \ref{thm0} seems difficult if not impossible.  Moreover, Theorem \ref{thm0} deduces a translational symmetry for Gaussian bubble clusters, whereas \cite[Theorem 2.6]{hutchings97} deduces a rotational symmetry.  Indeed, the minimal Euclidean bubbles should not have any translational symmetry.

The following Theorem implies flatness of boundaries of minimal Gaussian partitions, or it improves on the dimension condition of Theorem \ref{thm0} by one dimension.

\begin{theorem}[\embolden{Structure Theorem Dichotomy}]\label{thm1}
Let $\Omega_{1},\ldots,\Omega_{m}$ minimize Problem \ref{prob1}.  Suppose Assumption \ref{as2} holds.  Then at least one of the following holds.
\begin{itemize}
\item $\forall$ $1\leq i<j\leq m$, there exist two hyperplanes $P_{ij},P_{ij}'\subset\R^{\adimn}$ such that
$$(\partial\Omega_{i})\cap(\partial\Omega_{j})\subset P_{ij}\cup P_{ij}'.$$
\item The sets $\Omega_{1},\ldots,\Omega_{m}$ are $(m-2)$-dimensional.  That is, there exists $0\leq \ell\leq m-2$ and there exist $\Omega_{1}',\ldots,\Omega_{m}'\subset\R^{\ell}$ such that, after rotating $\Omega_{1},\ldots,\Omega_{m}$, we have
$$\Omega_{i}=\Omega_{i}'\times\R^{\sdimn-\ell+1}.$$
\end{itemize}
\end{theorem}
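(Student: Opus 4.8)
The plan is to run the second-variation argument sketched for \eqref{zero2p}, but now exploit \emph{both} infinitesimal translations and infinitesimal rotations, just as in the Euclidean Double Bubble proof \cite{hutchings02}. The key point is that, in the Gaussian setting, translations in direction $v$ give eigenfunctions $\langle v,N_{ij}\rangle$ of $(L_{ij})$ with eigenvalue $1$ (this is \eqref{zero1p}), while infinitesimal rotations should give eigenfunctions with eigenvalue $0$ — and since $0 < 1$, by Lemma \ref{rkorth} (orthogonality of eigenfunctions with distinct eigenvalues) these two families are $L^2(\gamma_{\sdimn})$-orthogonal on each $(\partial\Omega_i)\cap(\partial\Omega_j)$. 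An infinitesimal rotation is generated by a skew-symmetric matrix $A$; the associated normal perturbation is $x\mapsto\langle Ax,N_{ij}(x)\rangle$. I would first verify (using \eqref{zero0p} and the structure of the operator $L_{ij}$, essentially the same computation Colding--Minicozzi do for translations) that $L_{ij}\langle Ax,N_{ij}(x)\rangle=0$ for every skew-symmetric $A$ — here the quadratic term $\langle x,N_{ij}\rangle$ in the first-variation equation \eqref{zero0p} is what makes the rotation-generated function a genuine zero-eigenfunction rather than something with a lower-order error. This is where Assumption \ref{as2} (polynomial volume growth at triple-junction points) enters, to justify the integrations by parts on the stratified boundary and to control boundary terms at the singular set.

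Next I would set up the linear-algebraic pigeonhole. The admissible directions for translation form the subspace $V$ of dimension $\ge \sdimn-m+2$ as in the proof of Theorem \ref{thm0}. Restricting to $V$, Theorem \ref{thm0} already tells us $\Omega_i = \Omega_i'\times\R^{\sdimn-\ell+1}$ with $\ell\le m-1$; so we may assume $\ell=m-1$ exactly (otherwise the second bullet holds with $\ell\le m-2$ and we are done). In that case $\R^{m-1}$ carries the ``essential'' part of the partition, and on that factor there is no further translational freedom. The idea is then to feed infinitesimal rotations of $\R^{m-1}$ (skew-symmetric $A$ acting on the first $m-1$ coordinates) into the second-variation inequality. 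Because each $\langle Ax, N_{ij}\rangle$ is a zero-eigenfunction, and because we can correct its (possibly nonzero) integrals over the $\partial\Omega_i$ by adding the translational eigenfunctions from the complementary factor — which are eigenvalue-$1$ and hence orthogonal, so they do not spoil the sign — we obtain
$$\sum_{1\le i<j\le m}\int_{(\partial\Omega_i)\cap(\partial\Omega_j)}\langle Ax,N_{ij}(x)\rangle^2\,\gamma_{\sdimn}(x)\,dx\le 0,$$
forcing $\langle Ax, N_{ij}(x)\rangle = 0$ for all such $A$ and all $x$. Running over a spanning set of skew-symmetric matrices on $\R^{m-1}$ pins down $N_{ij}(x)$ to lie in a very restricted set: at each point $x$ in the $(m-1)$-dimensional factor, $N_{ij}(x)$ is forced to be parallel to $x$ (since the only vectors annihilated by $\langle Ax,\cdot\rangle$ for all skew $A$ are the multiples of $x$). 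Combined with $\vnormt{N_{ij}(x)}=1$ this says $N_{ij}(x) = \pm x/\vnormt{x}$, i.e. $(\partial\Omega_i)\cap(\partial\Omega_j)$ is a union of portions of spheres centered at the origin — but then plugging back into \eqref{zero0p} gives $H_{ij} = \vnormt{x}+\lambda_{ij}$ on a sphere, which (as for $m=2$ after \eqref{zero11}) is consistent only if the ``sphere'' degenerates; more carefully one argues via \eqref{zero0p} that the only way a sphere-piece can satisfy the first-variation equation with the correct normal orientation is to be a hyperplane, giving the first bullet (two hyperplanes accounting for the two possible signs of the normal).

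The main obstacle I anticipate is \emph{the regularity and boundary-term bookkeeping at the triple junctions}. Unlike the one-set case, $\partial\Omega_i$ is a stratified set: codimension-1 smooth faces meet along codimension-2 (and higher) singular strata, and the second-variation formula of Lemma \ref{lemma28} will involve boundary integrals over these strata. Establishing $L_{ij}\langle Ax,N_{ij}\rangle=0$ and performing the integration by parts needed to apply orthogonality both require controlling these terms — this is precisely what Assumption \ref{as2} is for, but threading it through carefully (and checking that the rotation-generated vector fields are admissible test perturbations in the sense of Lemma \ref{lemma28}, since a rotation of the ambient space need not preserve the $\R^{\sdimn-\ell+1}$ splitting unless one is careful about which plane one rotates in) is the delicate part. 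A secondary subtlety is the volume-correction step: one must check that the translational eigenfunctions available in the complementary $\R^{\sdimn-\ell+1}$ factor really do span enough to zero out all $m$ volume constraints simultaneously while staying orthogonal to the rotation functions; this is a dimension count ($\sdimn-\ell+1 \ge \sdimn-m+2 \ge$ enough directions once $\ell\le m-1$) combined with Lemma \ref{rkorth}, analogous to the correction trick already invoked after \eqref{zero2p}, but it needs to be made precise here.
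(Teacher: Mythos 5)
There is a genuine gap, and it sits at the heart of your argument: the rotation-generated functions $f_{ij}=\langle Ax,N_{ij}\rangle$ (for $A$ skew-symmetric) are indeed Jacobi fields with $L_{ij}f_{ij}=0$ — rotations preserve the Gaussian measure and the Gaussian surface measure exactly, so they generate a curve of critical points — but precisely for that reason they give you \emph{nothing} from the second variation. For an eigenfunction with eigenvalue $\mu$ one has $Q(F,F)=-\langle \L F,F\rangle=-\mu\langle F,F\rangle$; the translation argument works because $\mu=1>0$ forces $-\langle F,F\rangle\geq 0$ and hence $F\equiv 0$. With $\mu=0$ you get $Q(F,F)=0$, which is perfectly consistent with the stability inequality $Q\geq 0$ and does not yield $\sum_{i<j}\int\langle Ax,N_{ij}\rangle^{2}\gamma_{\sdimn}\,dx\leq 0$. (This is exactly why the Euclidean proof of \cite{hutchings02} must use rotational Jacobi fields through delicate nodal-domain and connectedness arguments rather than a direct sign argument.) A secondary problem: even if you could conclude $N_{ij}(x)\parallel x$, the resulting spheres centered at the origin are \emph{not} excluded by \eqref{zero0p} — a sphere of radius $r$ satisfies $H=\sdimn/r=r+\lambda$ for a suitable $\lambda$ (e.g.\ $\sqrt{\sdimn}\,S^{\sdimn}$ is a self-shrinker), so the claimed degeneration to hyperplanes does not follow.

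The paper's proof runs on an entirely different dichotomy, namely on the value of the restricted fundamental tone $\pcon$ of \eqref{seven0}. By Lemma \ref{lemma23}, $\pcon\geq 1$. If $\pcon=1$, the curvature bound of Lemma \ref{lemma29b} forces $\vnormt{A}\equiv 0$ on $\Sigma$, and then \eqref{nine5.3} with $H_{ij}=0$ and locally constant $N_{ij}$ places each $\Sigma_{ij}$ in at most two hyperplanes (first bullet). If $\pcon>1$, one takes a Dirichlet eigenfunction $F$ with $\L F=\pcon_{t}F$, $\pcon_{t}>1$, on a large truncation $\Sigma_{t}$; assuming the sets are $(m-1)$-dimensional, one can add a translational eigenfunction $G$ (eigenvalue $1$) so that $F+G$ is volume-preserving, and the almost-orthogonality $\abs{\langle F,G\rangle}\lesssim\epsilon$ of Lemma \ref{rkorth} — this is where Assumption \ref{as2} actually enters, not in establishing any rotational identity — shows $Q(F+G,F+G)<0$, contradicting minimality via Lemma \ref{lemma28}. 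Hence in that case the sets must be $(m-2)$-dimensional (second bullet). Your proposal never engages with $\pcon$, the curvature bound, or the existence of an eigenfunction with eigenvalue exceeding $1$, which are the actual load-bearing pieces.
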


Intuitively, Theorem \ref{thm1} should be sufficient to prove Conjecture \ref{conj0}, since $(m-2)$-dimensional sets should be ``unstable.''  For example, a small perturbation of the sets should always result in the span of $\int_{\Omega_{1}}x\gamma_{\adimn}(x)dx,\ldots,\int_{\Omega_{m}}x\gamma_{\adimn}(x)dx$ being $(m-1)$-dimensional, so Theorem \ref{thm0} should imply that the second case of Theorem \ref{thm1} does not occur.  We are unable to turn this intuition into a proof.

The following technical assumption is needed to prove the almost orthogonality of eigenfunctions of $(L_{ij})_{1\leq i<j\leq m}$ in the proof of Theorem \ref{thm1}.  Assumption \ref{as2} says that a neighborhood of the singular set has small Gaussian area on a sequence of annuli going to infinity.
\begin{assumption}[\embolden{Polynomial Volume Growth of Singular Set}]\label{as2}
For any $\epsilon,r>0$, define
$$C\colonequals\bigcup_{1\leq i<j<k\leq m}(\partial\Omega_{i})\cap(\partial\Omega_{j})\cap(\partial\Omega_{k}).$$
$$C_{\epsilon,r}\colonequals\{x\in\R^{\adimn}\colon\exists\,c\in C\,\,\mathrm{such}\,\,\mathrm{that}\,\, \vnorm{x-c}<\epsilon\,\,\mathrm{and}\,\,r<\vnorm{c}<r+1\}.$$
So, $C_{\epsilon,r}$ is the $\epsilon$-neighborhood of the part of $C$ lying in the annulus with radii $r$ and $r+1$.

We assume that: for any $\epsilon>0$, there exist a $0<r_{1}<r_{2}<\cdots$ such that $\lim_{\ell\to\infty}r_{\ell}=\infty$ and for any $\ell\geq1$, there exists $\eta_{\ell,\epsilon}\in C_{0}^{\infty}(\R^{\adimn})$ such that $\eta_{\ell,\epsilon}=0$ on $C_{\epsilon,r_{j}}$, $0\leq\eta_{\ell,\epsilon}\leq1$ everywhere, $\eta_{\ell,\epsilon}=1$ on $C_{2\epsilon,r_{j}}^{c}$, and
$$\lim_{\ell\to\infty}\limsup_{\epsilon\to0^{+}}\sum_{1\leq i<j\leq m}\int_{(\partial\Omega_{i})\cap(\partial\Omega_{j})}\vnorm{\nabla \eta_{\ell,\epsilon}}^{2}\gamma_{\sdimn}(x)dx=0.$$
\end{assumption}
For example, if $\adimn=2$ and if $C$ is a finite set of points, then Assumption \ref{as2} automatically holds, since the set $\{x\in C\colon r\leq \vnorm{x}\leq r+1\}$ is empty for all large $r$.  It is unclear whether or not Assumption \ref{as2} can be proven to hold a priori for any minimizers of Problem \ref{prob1}.

Assumption \ref{as2} is only used in the proof of Lemma \ref{rkorth} to prove the orthogonality of two eigenfunctions with different eigenvalues. % In fact, we only that the quantity $\gamma_{\sdimn-1}(\{x\in C\colon r\leq \vnorm{x}\leq r+1\})$ becomes sufficiently small when $r$ is large, since we only use Lemma \ref{rkorth} when $\epsilon=1/2$.
Orthogonality of eigenfunctions with different eigenvalues is nontrivial in this setting since the surface $\Sigma\colonequals\cup_{1\leq i<j\leq m}(\partial\Omega_{i})\cap(\partial\Omega_{j})$ is non-compact with nonempty boundary.

\begin{figure}[h]
\centering
\def\svgwidth{.4\textwidth}
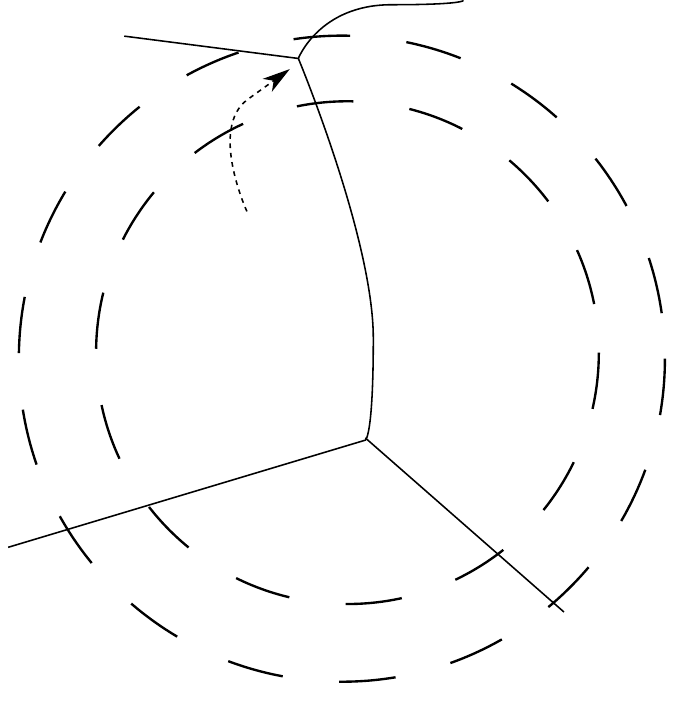
\caption{Depiction of the set $C_{\epsilon,r}$, an annular neighborhood of the singular set $C$.}
\end{figure}

In light of Theorem \ref{thm0}, we say that sets $\Omega_{1},\ldots,\Omega_{m}$ are \textbf{$\ell$-dimensional} if the dimension of the span of $\int_{\Omega_{1}}x\gamma_{\adimn}(x)dx,\ldots,\int_{\Omega_{m}}x\gamma_{\adimn}(x)dx$ is $\ell$.

In the case $m=3$, Theorem \ref{thm0} says the sets $\Omega_{1},\Omega_{2},\Omega_{3}$ minimizing Problem \ref{prob1} are at most two-dimensional.  So, without loss of generality, if $m=3$ in Conjecture \ref{conj0}, we may assume that $\adimn=2$.  Theorem \ref{thm1} then implies that $\Omega_{1},\Omega_{2},\Omega_{3}\subset\R^{2}$ have flat (one-dimensional) boundaries (contained in at most $6$ lines), or the sets $\Omega_{1},\Omega_{2},\Omega_{3}$ are one-dimensional (with boundaries contained in at most $6$ lines, by \eqref{zero0p}).  Given these reductions, one can e.g. check a finite number of cases to conclude that Conjecture \ref{conj0} holds when $m=3$ (if Assumption \ref{as2} holds.)

\begin{cor}[\embolden{Gaussian Double Bubble Problem}]\label{cor1}
If any $\Omega_{1},\ldots,\Omega_{m}$ minimizing Problem \ref{prob1} satisfy Assumption \ref{as2}, then Conjecture \ref{conj0} holds when $m=3$.
\end{cor}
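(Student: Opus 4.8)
The plan is to combine Theorems~\ref{thm0} and~\ref{thm1} to reduce Conjecture~\ref{conj0} for $m=3$ to a finite case analysis of piecewise-linear clusters in $\R^{2}$. Fix $a_{1},a_{2},a_{3}>0$ with $a_{1}+a_{2}+a_{3}=1$ and let $\Omega_{1},\Omega_{2},\Omega_{3}$ be any minimizer of Problem~\ref{prob1}, which by hypothesis satisfies Assumption~\ref{as2}. First I would apply Theorem~\ref{thm1}: either (i) each pairwise interface $(\partial\Omega_{i})\cap(\partial\Omega_{j})$ lies in a union of two hyperplanes of $\R^{\adimn}$, or (ii) the sets are $1$-dimensional. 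In case (ii) one has $\Omega_{i}=\Omega_{i}'\times\R^{\sdimn}$ with $\Omega_{i}'\subset\R$ (the alternative $\ell=0$ forces some $\Omega_{i}$ to be empty or all of $\R^{\adimn}$, contradicting $0<a_{i}<1$); by Fubini $\Omega_{1}',\Omega_{2}',\Omega_{3}'$ minimize the one-dimensional Gaussian $3$-bubble problem with the same volumes, hence they are three consecutive intervals, and the cluster is a product of $\R^{\sdimn}$ with three parallel slabs. In case (i) I would invoke Theorem~\ref{thm0}: the span of the vectors $\int_{\Omega_{i}}x\,\gamma_{\adimn}(x)\,dx$ has dimension $\ell\le 2$, so after a rotation $\Omega_{i}=\Omega_{i}'\times\R^{\sdimn-1}$ with $\Omega_{i}'\subset\R^{2}$ (embedding $\R^{\ell}\subset\R^{2}$ if $\ell<2$); by Fubini the $\Omega_{i}'$ minimize Problem~\ref{prob1} in $\R^{2}$, and since the interfaces are products over $\R^{\sdimn-1}$ each $(\partial\Omega_{i}')\cap(\partial\Omega_{j}')$ lies in a union of at most two lines, so the full wireframe $\cup_{i<j}(\partial\Omega_{i}')\cap(\partial\Omega_{j}')$ lies in at most six lines. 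Because the $m=3$ regular-simplex cluster in $\R^{\adimn}$ is itself the product of its two-dimensional version with $\R^{\sdimn-1}$, it suffices to establish Conjecture~\ref{conj0} for the $\Omega_{i}'$.

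Next I would use the regularity theory for minimizing clusters (interfaces are smooth away from triple junctions, where three sheets meet at $120^{\circ}$), together with the first variation identity~\eqref{zero0p}, to describe the wireframe in $\R^{2}$: on each segment $H_{ij}=\langle x,N_{ij}(x)\rangle+\lambda_{ij}$ forces $H_{ij}\equiv 0$ and $\langle x,N_{ij}(x)\rangle$ constant there; there are no creases (each connected component of each interface is a full line, a ray, or a segment); the only singular points are triple junctions where $\Omega_{1},\Omega_{2},\Omega_{3}$ all meet in three arcs at $120^{\circ}$ (no higher-order junctions, since there are only three sets); and every endpoint of a ray or segment is such a triple junction. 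I would then enumerate the finitely many combinatorial types by the number of triple junctions. With zero triple junctions the interfaces are parallel lines: exactly two of them give three parallel slabs (the configuration of case (ii)), while three or more give a disconnected region. With exactly one triple junction the wireframe is three infinite rays from one point at $120^{\circ}$, which is precisely the cluster of Conjecture~\ref{conj0} with $y$ that point. With two or more triple junctions — for instance two triple junctions joined by a segment with four attached rays — a short case check shows one of the three regions is disconnected.

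To finish I would exclude the remaining configurations. Any cluster with a disconnected region is non-minimal, by connectedness of minimizing Gaussian clusters (or, failing that, by a direct surface-area comparison within the relevant finite family of piecewise-linear competitors). For the three-slab cluster (equivalently, the one-dimensional cluster of case (ii)), I would use that the volume map $y\mapsto(\gamma_{2}(\Omega_{1}(y)),\gamma_{2}(\Omega_{2}(y)),\gamma_{2}(\Omega_{3}(y)))$ of the $120^{\circ}$-sector cluster $(\Omega_{i}(y))$ is surjective onto the open $2$-simplex (a continuity/degree argument: it equals the barycenter at $y=0$ and tends to each vertex as $y\to\infty$ along the appropriate ray), so there is a sector cluster with volumes $a_{1},a_{2},a_{3}$; a direct Gaussian computation then shows its Gaussian surface area is strictly smaller than that of every three-slab cluster with these volumes, so the three-slab cluster is not a minimizer. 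Hence the minimizing $\Omega_{i}'$ form the one-triple-junction configuration, i.e.\ a translate of the three-sector cluster, the translation realizing the prescribed volumes existing by surjectivity of the volume map. Undoing the reduction to $\R^{2}$ gives Conjecture~\ref{conj0} for $m=3$.

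The step I expect to be hardest is this finite case analysis in $\R^{2}$: ruling out cleanly the configurations with two or more triple junctions (needing either a connectedness statement for minimizing Gaussian clusters or an explicit surface-area comparison over a small family of piecewise-linear competitors), and proving the quantitative inequality that the three-sector cluster strictly beats the three-slab cluster for \emph{every} admissible volume vector — an elementary but genuinely one- or two-parameter Gaussian estimate that must hold uniformly up to the boundary of the volume simplex. The regularity inputs (no creases, $120^{\circ}$ junctions, endpoints are triple junctions) and the surjectivity of the volume map are also needed as ingredients.
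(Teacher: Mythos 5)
Your reduction skeleton matches the paper's: Theorem \ref{thm0} puts the minimizer in $\R^{2}$ (times a Euclidean factor), Theorem \ref{thm1} forces either flat interfaces or a one-dimensional (three-slab) configuration, the first-variation identity \eqref{zero0p} plus the regularity of Assumption \ref{as1} (flat edges, $120^{\circ}$ triple junctions, no higher-order singularities) constrains the wireframe, and the slabs are finally excluded by a direct Gaussian comparison or the Milman--Neeman differentiation argument. Your combinatorial enumeration by number of triple junctions is also consistent with what the paper needs: zero junctions gives parallel lines, one junction gives the conjectured tripod, and two or more junctions (or three or more parallel lines) force some $\Omega_{i}$ to be disconnected.

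The genuine gap is the step you yourself flag as hardest: eliminating the configurations in which some region is disconnected. You propose to cite ``connectedness of minimizing Gaussian clusters,'' but no such theorem is available here --- connectedness of the regions of a minimizer is precisely the nontrivial content at this stage (it is a serious issue even in the Euclidean double bubble literature), so invoking it is circular. Your fallback, a ``direct surface-area comparison within the relevant finite family,'' is not actually over a finite family: each combinatorial type carries continuous parameters (the constants $\lambda_{ij}$ and the junction locations), and the comparison would have to be carried out uniformly in the volume vector. The paper instead disposes of these configurations variationally: if some region is disconnected, one can partition the edges of the wireframe into two nonempty equivalence classes (joined through junctions by edges not parallel to a fixed direction $v$) and build a third eigenfunction of $\L$ with eigenvalue $1$ by taking $X=v$ on one class and $X=-v$ on the other, in addition to the two global translations of Remark \ref{rk30}. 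Three independent eigenvalue-$1$ eigenfunctions admit a nontrivial linear combination $F$ satisfying the volume constraint \eqref{eight2}, and then Lemmas \ref{lemma28} and \ref{lemma29.9} give $Q(F,F)<0$ for a volume-preserving variation, contradicting minimality. Replacing your connectedness appeal with this ``piecewise translation'' second-variation argument (or with an honest, fully worked comparison over the parametrized family of flat competitors) is what is needed to close the proof; the rest of your outline is sound.
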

Corollary \ref{cor1} was recently proven independently by \cite{milman18a}, without the need for Assumption \ref{as2}.  We emphasize that the proof of Corollary \ref{cor1} has a different strategy than the proof of the main result of \cite{milman18a}.  In particular, the proof of Corollary \ref{cor1} proceeds along the lines of \cite{colding12a,zhu16,heilman17}.

In the case $m=4$, Theorem \ref{thm0} says the sets $\Omega_{1},\ldots,\Omega_{4}$ minimizing Problem \ref{prob1} are at most three-dimensional.  So, without loss of generality, if $m=4$ in Conjecture \ref{conj0}, we may assume that $\adimn=3$.  If these sets are three-dimensional, then the first case of Theorem \ref{thm1} implies that the sets have flat boundaries.  If the sets are at most two-dimensional, then we adapt the argument of \cite{milman18a} to show that the sets must have flat boundaries.   So, in any case, the sets $\Omega_{1},\ldots,\Omega_{4}$ have flat boundaries, and the boundaries are contained in at most $8$ planes in $\R^{3}$, by \eqref{zero0p}.  Given these reductions, one can check various cases to conclude that Conjecture \ref{conj0} holds when $m=4$ (if Assumption \ref{as2} holds.)

\begin{cor}[\embolden{Gaussian Triple Bubble Problem}]\label{cor2}
If any $\Omega_{1},\ldots,\Omega_{m}$ minimizing Problem \ref{prob1} satisfy Assumption \ref{as2}, then Conjecture \ref{conj0} holds when $m=4$.
\end{cor}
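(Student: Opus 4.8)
The plan is to combine Theorems \ref{thm0} and \ref{thm1} to reduce to a \emph{flat} (piecewise-linear) minimizing partition of low-dimensional Euclidean space, and then to dispose of a finite case analysis. First, apply Theorem \ref{thm0} with $m=4$: the span of the four vectors $\int_{\Omega_i}x\gamma_{\adimn}(x)\,dx$ has some dimension $\ell\le m-1=3$, and after a rotation one has $\Omega_i=\Omega_i'\times\R^{\sdimn-\ell+1}$ with $\Omega_i'\subset\R^{\ell}$ for $1\le i\le 4$. Since both the Gaussian surface area and the Gaussian volumes split off the Euclidean factor in such a product, it suffices to prove Conjecture \ref{conj0} for the partition $\Omega_1',\dots,\Omega_4'$ of $\R^{\ell}$ with $\ell\in\{1,2,3\}$; in particular one may assume $\adimn=3$.

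Second, establish that every interface is flat. If $\ell=3$, then the sets are $3$-dimensional, so the second alternative of Theorem \ref{thm1} (the sets being $(m-2)=2$-dimensional) is impossible; hence the first alternative holds and each $(\partial\Omega_i)\cap(\partial\Omega_j)$ lies in two hyperplanes $P_{ij}\cup P_{ij}'$. Feeding this back into the first variation identity \eqref{zero0p}: on a flat piece the mean curvature $H_{ij}$ vanishes, so $\langle x,N_{ij}(x)\rangle=-\lambda_{ij}$ there, which is the equation of a hyperplane, so the whole singular set of the cluster lies in finitely many (at most $8$) hyperplanes of $\R^{3}$. If instead $\ell\le 2$, then after the reduction we have a minimizing partition of $\R^{2}$ (or of $\R$), where Theorem \ref{thm1} no longer forces flatness on its own; here I would adapt the second-order matrix differential inequality and the maximum principle of \cite{milman18a} to conclude that the boundary curves are straight line segments (the case $\ell\le 1$ being immediate, since then the boundaries are isolated points). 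Throughout, Assumption \ref{as2} is the hypothesis that makes Lemma \ref{rkorth}, and hence Theorem \ref{thm1}, available.

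Third, with all interfaces flat the minimizer is a polyhedral partition of $\R^{3}$ (or $\R^{2}$, $\R$) into four regions of prescribed Gaussian volumes. By the regularity theory for minimizing clusters, its singular set is locally Euclidean: flat pieces of hypersurface meet in threes along $(\sdimn-1)$-dimensional flats at $120^{\circ}$, and these in turn meet at the tetrahedral angle. Combined with \eqref{zero0p}, this cuts the classification down to finitely many combinatorial/geometric types, and what remains is to compare Gaussian surface areas across these types — crucially including the low-dimensional clusters arising from $\ell<3$ — and to show that the partition of $\R^{3}$ into the four Voronoi sectors of a regular simplex, i.e. the configuration described in Conjecture \ref{conj0}, is the unique minimizer. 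Here I would use the $m=3$ result (Corollary \ref{cor1}) in situations where a region can be split off, and direct Gaussian surface-area computations for the remaining finitely many flat $4$-clusters.

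I expect the two main obstacles to be: (i) the branch $\ell\le 2$, where Theorem \ref{thm1} gives nothing and one must genuinely import and adapt the maximum-principle machinery of \cite{milman18a} to prove flatness of planar minimizing $4$-clusters; and (ii) the concluding finite case analysis, namely verifying that no flat $4$-cluster in any of the relevant dimensions beats the regular simplicial partition. Step (ii) is elementary in principle but laborious, and it is unavoidable precisely because $(m-2)$-dimensional clusters cannot be ruled out a priori, as remarked after Theorem \ref{thm1}.
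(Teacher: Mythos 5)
Your first two reduction steps (reduce to $\R^{3}$ via Theorem \ref{thm0}, then use the dichotomy of Theorem \ref{thm1} to get flat interfaces contained in at most $8$ planes when $\ell=3$) match the paper. But the proposal has two genuine gaps, precisely at the two places you flag as "obstacles," and in both places the paper does something different and more specific than what you describe.

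First, the branch $\ell\le 2$. You propose to "import and adapt the maximum-principle machinery of \cite{milman18a} to prove flatness of planar minimizing $4$-clusters." That is not a proof step; it is a deferral of the hardest remaining case to an entirely different method, and if one could simply import that machinery one would not need this paper's framework at all. The paper does \emph{not} prove flatness in the low-dimensional case. Instead it observes that when the cluster lives in $\R^{2}$ one still has two translation eigenfunctions of $\L$ with eigenvalue $1$ (Remark \ref{rk30}), and, since either $\pcon=1$ (giving flatness via Lemma \ref{lemma29b}) or $\pcon>1$, one can add a Dirichlet fundamental tone $F$ with $\L F=\pcon_{t}F$ and use the almost-orthogonality Lemma \ref{rkorth} (this is where Assumption \ref{as2} is invoked a second time, beyond its use inside Theorem \ref{thm1}) to produce, for every $\epsilon>0$, a \emph{three}-dimensional subspace on which $Q(G,G)\le-(1-\epsilon)\langle G,G\rangle$ and whose image under the first-variation map is all of the trace-zero hyperplane. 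Flatness is never established in this branch and is not needed.

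Second, the concluding step. Your plan ends with a finite case analysis comparing Gaussian perimeters of all polyhedral $4$-clusters (slabs, prisms over tripods, simplicial cones, \dots) across all volume vectors; you call this "elementary in principle but laborious," but it is not carried out, and comparing, say, a prism over the planar tripod against the simplex partition for \emph{every} $a\in\Delta_{4}$ is genuinely nontrivial. The paper avoids this entirely: it takes the three-dimensional subspace of almost-eigenfunctions produced above, feeds it into the second-variation comparison of Lemma \ref{lemma87} together with the Cauchy--Schwarz matrix inequality of Lemma \ref{lemma86}, and deduces $I''\le-2\pi K^{-1}$ for the deficit profile; the trace identity of Lemma \ref{lemma75} then forces $J(a)\ge0$ at an interior minimum of $J$, contradicting $J(a)<0$. (The induction base, that $J$ vanishes on the boundary of $\Delta_{4}$, is exactly where Corollary \ref{cor1} enters --- you correctly anticipated that the $m=3$ case would be used, but for this purpose rather than for "splitting off a region.") So the structure of the endgame is a differentiation/maximum-type argument on the volume simplex, not a combinatorial classification, and without it your proposal does not close.
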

To our knowledge, this is the first known proof of any triple bubble problem in arbitrary dimension, albeit conditional on Assumption \ref{as2}.  (For an update, see Section \ref{secsub}.)

The second case of Theorem \ref{thm1} corresponds to the second variation operator having an eigenvalue larger than $1$, which really should not happen (see \eqref{one3.5n}, \eqref{seven00}).  An eigenvalue larger than one indicates that something other than an infinitesimal translation can decrease the Gaussian surface area of sets minimizing Problem \ref{prob1}.  And this should not happen since Conjecture \ref{conj0} predicts that the sets of minimal Gaussian surface area (for different volume restrictions) should be translations of each other.

As noted already in \eqref{zero2p}, the more eigenfunctions with positive eigenvalues there are for the second variation operator \eqref{seven00}, the more control can be placed on the structure of the sets minimizing Problem \ref{prob1}.  That is, more eigenfunctions translates to flatness or lower-dimensionality of the minimal sets.  The general scheme of our arguments is then to look for as many eigenfunctions as possible of the second variation operator, as in \eqref{zero1p}.  In general it seems impossible to find explicit eigenfunctions beyond those described in \eqref{zero1p}, so we often simply prove the existence of such eigenfunctions.  This was a key step in the arguments of \cite{colding12a}.

\begin{remark}
In order to prove Conjecture \ref{conj0} for $m>4$, one might try to extend Theorem \ref{thm1} to higher dimensions.
\end{remark}

%\subsection{Noise Stability}
%
%Unfortunately, if we repeat the above first and second variation arguments with noise stability in place of Gaussian surface area, it seems difficult to explicitly identify eigenfunctions of the second variation operator.  That is, the analogue of \eqref{zero1p} no longer holds.  In fact, the second variation operator for noise stability is no longer a differential operator, but an integral operator (see Lemma \ref{lemma6}).  Despite these difficulties, an approximate version of \eqref{zero1p} still holds.  Since the analogue of \eqref{zero1p} does not have an exact equality, the conclusion one can make for sets optimizing noise stability is weaker than the case of Gaussian surface area.  Nevertheless, the analogue of \eqref{zero1p} can still place nontrivial geometric constraints on sets optimizing noise stability.

\subsection{Organization}

\begin{itemize}
\item Theorem \ref{thm0} is proven in Section \ref{secred}.
\item Theorem \ref{thm1} is proven in Section \ref{secdi}.
\item Corollary \ref{cor1} is proven in Section \ref{secdoub}.
\item Corollary \ref{cor2} is proven in Section \ref{sectrip}.
\end{itemize}
Previous sections cover preliminary material.  Section \ref{secmilman} covers results from \cite{milman18a}.  Section \ref{secconc} provides some concluding remarks.

\subsection{Subsequent Work}\label{secsub}

After uploading the first version of this work to the arXiv on Friday May 25, 2018, Milman and Neeman \cite{milman18b} uploaded their paper to the arXiv on Monday May 28, 2018, solving Conjecture \ref{conj0} unconditionally.  Nevertheless, our work and theirs were completed at essentially the same time.  One observation of theirs that did not appear in our work is that the second case of Theorem \ref{thm1} does not actually occur (see \cite[Proposition 7.6]{milman18b}).  Using our terminology and notation, in the second case of Theorem \ref{thm1}, there exists a piecewise constant function $F$ on $\cup_{i=1}^{m}\partial\Omega_{i}'$ whose second variation is negative (see Lemma \ref{lemma23}).  The observation of \cite[Proposition 7.6]{milman18b} is that the function $x_{\adimn}F$ on $\cup_{i=1}^{m}\partial\Omega_{i}$ also has negative second variation, and it is automatically (Gaussian) volume-preserving, contradicting the minimality of $\Omega_{1},\ldots,\Omega_{m}$.  Therefore, the second case of Theorem \ref{thm1} cannot occur, i.e. only the first case occurs.

\section{Preliminaries and Notation}\label{secpre}

We say that $\Sigma\subset\R^{\adimn}$ is an $\sdimn$-dimensional $C^{\infty}$ manifold with boundary if $\Sigma$ can be locally written as the graph of a $C^{\infty}$ function on a relatively open subset of $\{(x_{1},\ldots,x_{\sdimn})\in\R^{\sdimn}\colon x_{\sdimn}\geq0\}$.  For any $(\adimn)$-dimensional $C^{\infty}$ manifold $\Omega\subset\R^{\adimn}$ such that $\partial\Omega$ itself has a boundary, we denote
\begin{equation}\label{c0def}
\begin{aligned}
C_{0}^{\infty}(\Omega;\R^{\adimn})
&\colonequals\{f\colon \Omega\to\R^{\adimn}\colon f\in C^{\infty}(\Omega;\R^{\adimn}),\, f(\partial\partial \Omega)=0,\\
&\qquad\qquad\qquad\exists\,r>0,\,f(\Omega\cap(B(0,r))^{c})=0\}.
\end{aligned}
\end{equation}
We also denote $C_{0}^{\infty}(\Omega)\colonequals C_{0}^{\infty}(\Omega;\R)$.  We let $\mathrm{div}$ denote the divergence of a vector field in $\R^{\adimn}$.  For any $r>0$ and for any $x\in\R^{\adimn}$, we let $B(x,r)\colonequals\{y\in\R^{\adimn}\colon\vnormt{x-y}\leq r\}$ be the closed Euclidean ball of radius $r$ centered at $x\in\R^{\adimn}$.  Here $\partial\partial\Omega$ refers to the $(\sdimn-1)$-dimensional boundary of $\Omega$.

\begin{definition}[\embolden{Reduced Boundary}]
A measurable set $\Omega\subset\R^{\adimn}$ has \embolden{locally finite surface area} if, for any $r>0$,
$$\sup\left\{\int_{\Omega}\mathrm{div}(X(x))dx\colon X\in C_{0}^{\infty}(B(0,r),\R^{\adimn}),\, \sup_{x\in\R^{\adimn}}\vnormt{X(x)}\leq1\right\}<\infty.$$
Equivalently, $\Omega$ has locally finite surface area if $\nabla 1_{\Omega}$ is a vector-valued Radon measure such that, for any $x\in\R^{\adimn}$, the total variation
$$
\vnormt{\nabla 1_{\Omega}}(B(x,1))
\colonequals\sup_{\substack{\mathrm{partitions}\\ C_{1},\ldots,C_{m}\,\mathrm{of}\,B(x,1) \\ m\geq1}}\sum_{i=1}^{m}\vnormt{\nabla 1_{\Omega}(C_{i})}
$$
is finite \cite{cicalese12}.  If $\Omega\subset\R^{\adimn}$ has locally finite surface area, we define the \embolden{reduced boundary} $\redb \Omega$ of $\Omega$ to be the set of points $x\in\R^{\adimn}$ such that
$$N(x)\colonequals-\lim_{r\to0^{+}}\frac{\nabla 1_{\Omega}(B(x,r))}{\vnormt{\nabla 1_{\Omega}}(B(x,r))}$$
exists, and it is exactly one element of $S^{\sdimn}\colonequals\{x\in\R^{\adimn}\colon\vnorm{x}=1\}$.
\end{definition}

\begin{lemma}[\embolden{Existence}, { \cite[Theorem VI.2]{almgren76}}]\label{lemma51p}
There exist measurable $\Omega_{1},\ldots,\Omega_{m}\subset\R^{\adimn}$ minimizing Problem \ref{prob1}.
\end{lemma}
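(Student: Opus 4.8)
\textit{Proof plan.} The plan is to run the direct method in the calculus of variations; the assertion is essentially Almgren's general existence theorem specialized to the Gaussian weight, and because the Gaussian measure is finite with a continuous, strictly positive density, every step is routine. First I would rewrite the objective intrinsically: for a partition $\Omega_{1},\ldots,\Omega_{m}$ of $\R^{\adimn}$ into sets of locally finite perimeter one has $\partial^{*}\Omega_{i}=\bigcup_{j\neq i}(\partial^{*}\Omega_{i}\cap\partial^{*}\Omega_{j})$ up to an $\mathcal{H}^{\sdimn}$-null set, with $N_{\Omega_{i}}=-N_{\Omega_{j}}$ on $\partial^{*}\Omega_{i}\cap\partial^{*}\Omega_{j}$, so that if $P_{\gamma}(\Omega)\colonequals\int_{\partial^{*}\Omega}\gamma_{\sdimn}\,d\mathcal{H}^{\sdimn}$ denotes the Gaussian perimeter, the quantity to be minimized equals $\tfrac12\sum_{i=1}^{m}P_{\gamma}(\Omega_{i})$. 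I would also record that the infimum over admissible partitions is finite: slicing $\R^{\adimn}$ into $m$ parallel slabs by $m-1$ hyperplanes orthogonal to $e_{1}$, with the hyperplane positions chosen by monotonicity of the one-dimensional Gaussian distribution function so that the $i$-th slab has Gaussian measure $a_{i}$, is admissible and has objective equal to a finite sum of terms $\int_{\text{hyperplane}}\gamma_{\sdimn}\,d\mathcal{H}^{\sdimn}<\infty$.

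Next I would take a minimizing sequence of admissible partitions $(\Omega_{1}^{k},\ldots,\Omega_{m}^{k})_{k\geq1}$, so that $\sup_{k}\sum_{i}P_{\gamma}(\Omega_{i}^{k})<\infty$. Since $\gamma_{\sdimn}$ is bounded below by a positive constant on every ball $B(0,R)$, this yields a uniform bound on the Euclidean relative perimeter of each $\Omega_{i}^{k}$ in $B(0,R)$; combined with $0\leq 1_{\Omega_{i}^{k}}\leq1$ and compactness of $BV(B(0,R))$ in $L^{1}(B(0,R))$, a diagonal argument over $R\to\infty$ produces a subsequence along which $1_{\Omega_{i}^{k}}\to1_{\Omega_{i}}$ in $L^{1}_{\mathrm{loc}}(\R^{\adimn})$ and Lebesgue-a.e., for every $1\leq i\leq m$ and some measurable $\Omega_{1},\ldots,\Omega_{m}$. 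Passing $\sum_{i}1_{\Omega_{i}^{k}}=1$ to the a.e.\ limit shows that, after modifying each $\Omega_{i}$ on a null set, $\bigcup_{i}\Omega_{i}=\R^{\adimn}$ with the $\Omega_{i}$ pairwise disjoint; and since $\gamma_{\adimn}$ is a finite measure, dominated convergence gives $\gamma_{\adimn}(\Omega_{i})=\lim_{k}\gamma_{\adimn}(\Omega_{i}^{k})=a_{i}$. In particular no volume escapes to infinity and the limit partition is admissible.

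Finally I would invoke lower semicontinuity of the Gaussian perimeter. Writing, for open $A\subset\R^{\adimn}$,
$P_{\gamma}(\Omega;A)=\sup\{\int_{\Omega}\mathrm{div}\,X\,dx:X\in C^{1}_{c}(A;\R^{\adimn}),\ \vnorm{X(x)}\leq\gamma_{\sdimn}(x)\ \forall\,x\}$,
each map $\Omega\mapsto\int_{\Omega}\mathrm{div}\,X\,dx$ is continuous under $L^{1}_{\mathrm{loc}}$ convergence because $\mathrm{div}\,X$ is bounded with compact support, so $P_{\gamma}(\cdot;A)$ is $L^{1}_{\mathrm{loc}}$-lower semicontinuous as a supremum of continuous functionals; taking $A=B(0,j)$ and letting $j\to\infty$ (these functionals increasing in $j$ and converging to $P_{\gamma}(\cdot)=P_{\gamma}(\cdot;\R^{\adimn})$) gives lower semicontinuity of $P_{\gamma}$. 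Hence $\sum_{i}P_{\gamma}(\Omega_{i})\leq\liminf_{k}\sum_{i}P_{\gamma}(\Omega_{i}^{k})$, so the admissible partition $\Omega_{1},\ldots,\Omega_{m}$ realizes the infimum and is a minimizer.

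I do not anticipate a serious obstacle, precisely because the confining Gaussian potential rules out the drift and escape-to-infinity phenomena that force one to work on a sphere or torus (or to run a nucleation argument) in the analogous Euclidean cluster problems. The only points needing a little care are the partition identity $\sum_{i<j}\int_{\partial^{*}\Omega_{i}\cap\partial^{*}\Omega_{j}}\gamma_{\sdimn}\,d\mathcal{H}^{\sdimn}=\tfrac12\sum_{i}P_{\gamma}(\Omega_{i})$ for general finite-perimeter partitions, the verification that the weighted perimeter is lower semicontinuous, and that the volume constraints pass to the limit — all dispatched above using finiteness of $\gamma_{\adimn}$ and continuity and positivity of $\gamma_{\sdimn}$.
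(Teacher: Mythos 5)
Your argument is correct, but it is worth noting that the paper does not actually prove this lemma: it is dispatched entirely by a citation to Almgren's general existence theorem \cite[Theorem VI.2]{almgren76}, which is designed for the much harder unweighted Euclidean cluster problem. Your self-contained direct-method proof is a legitimate alternative, and the comparison is instructive. Almgren's machinery must contend with translation invariance and the possibility that volume escapes to infinity along a minimizing sequence (hence nucleation/truncation arguments), whereas your proof correctly isolates the reason none of that is needed here: since $\gamma_{\adimn}$ is a finite measure with smooth, strictly positive density, a.e.\ convergence of the indicators plus dominated convergence forces the volume constraints to pass to the limit, and positivity of $\gamma_{\sdimn}$ on compact sets converts the Gaussian perimeter bound into the local Euclidean $BV$ bound needed for compactness. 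The remaining ingredients you invoke --- the identity $\sum_{i<j}\int_{\redb\Omega_{i}\cap\redb\Omega_{j}}\gamma_{\sdimn}\,d\mathcal{H}^{\sdimn}=\tfrac12\sum_{i}\int_{\redb\Omega_{i}}\gamma_{\sdimn}\,d\mathcal{H}^{\sdimn}$ for finite-perimeter partitions (Federer's structure theorem) and lower semicontinuity of the weighted perimeter via the supremum characterization with test fields satisfying $\vnormt{X(x)}\leq\gamma_{\sdimn}(x)$ --- are both standard and correctly deployed; the latter works precisely because $\gamma_{\sdimn}$ is smooth and positive, so $X/\gamma_{\sdimn}$ is an admissible unit-bounded field. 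The only point I would make explicit is the convention that a measurable set without locally finite perimeter is assigned infinite Gaussian surface area, so that the minimizing sequence may be assumed to consist of finite-perimeter partitions; with that said, the proof is complete.
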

For more detail on this proof, see e.g. \cite[Theorem 4.1]{milman18a}.

\begin{lemma}[\embolden{Regularity}, {\cite[Theorem 1.3]{colombo17}, \cite[Theorem 4.1, Corollary 4.4]{milman18a}}]\label{lemma52.6}
Let $\Omega_{1},\ldots\Omega_{m}$ minimize problem \ref{prob1}.  Then Assumption \ref{as1} holds.
\end{lemma}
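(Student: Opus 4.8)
The plan is to deduce the regularity claims of Assumption \ref{as1} from the now-standard structure theory for perimeter-minimizing clusters, exploiting that $\gamma_{\adimn}$ has a $C^{\infty}$, strictly positive density on all of $\R^{\adimn}$. First I would observe that the quantity minimized in Problem \ref{prob1} is a weighted perimeter cluster functional $\sum_{i<j}\int_{(\partial\Omega_{i})\cap(\partial\Omega_{j})}\gamma_{\sdimn}(x)\,dx$, and that for each $r>0$ there are constants $0<c(r)\le C(r)<\infty$ with $c(r)\le\gamma_{\sdimn}(x)\le C(r)$ on $B(0,r)$, while $\log\gamma_{\sdimn}$ is Lipschitz there. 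Hence a minimizing cluster $\Omega_{1},\ldots,\Omega_{m}$, when tested against competitors differing from it only inside $B(0,r)$, is a $(\Lambda,r_{0})$-almost minimizer of the \emph{unweighted} Euclidean perimeter cluster functional, with $\Lambda=\Lambda(r)$ controlled by the Lipschitz constant of $\log\gamma_{\sdimn}$ on $B(0,r)$. This localization is what imports Euclidean regularity; the noncompactness of $\R^{\adimn}$ is harmless because every conclusion in Assumption \ref{as1} is local, and finiteness of the total weighted perimeter is already supplied by Lemma \ref{lemma51p}.

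Next I would apply the regularity theory for such almost-minimizing clusters in three steps. (1) By Almgren's work \cite{almgren76}, away from a closed singular set each interface $(\redb\Omega_{i})\cap(\redb\Omega_{j})$ is a $C^{1,\alpha}$, hence $\sdimn$-dimensional hypersurface; the weighted first-variation identity \eqref{zero0p} then feeds into the Schauder theory for the elliptic operator governing the interface and bootstraps the regularity to $C^{\infty}$ (indeed real-analytic, since $\gamma_{\sdimn}$ is). (2) For the singular set one invokes the sharp stratification and dimension bounds: \cite[Theorem 1.3]{colombo17} gives the codimension estimate for the singular set of a minimizing cluster, and its blow-up classification in low ambient dimension. (3) Finally \cite[Theorem 4.1, Corollary 4.4]{milman18a} transfers these conclusions to the Gaussian-weighted problem and records the precise local picture on the top-dimensional stratum of the singular set: near a triple-junction point the cluster is asymptotic to three half-hyperplanes meeting pairwise at $120^{\circ}$, crossed with $\R^{\sdimn-1}$, with still-lower-dimensional strata of higher-order singularities. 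Assembling (1)--(3) yields exactly the content of Assumption \ref{as1}.

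The step I expect to require the most care is not the localization but quoting the singular-set theory in a form valid for the weighted functional: one must check that the blow-up of a minimizing cluster at a point is a \emph{minimizing Euclidean} cluster cone (the weight evaporates in the blow-up precisely because $\gamma_{\sdimn}$ is $C^{\infty}$ and bounded away from $0$ and $\infty$ near the point), and one must invoke the classification of low-dimensional minimizing cluster cones to pin down the $120^{\circ}$ behavior and the codimension of the residual singularities. These are exactly the results established in \cite{colombo17,milman18a}, so in the write-up this lemma reduces to a citation; the only genuine verification is that the Gaussian weight, being smooth and locally two-sided bounded, affects none of the blow-up monotonicity, compactness, or Schauder-bootstrap arguments underlying those results.
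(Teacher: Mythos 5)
Your proposal is correct and takes essentially the same route as the paper, which gives no proof at all for this lemma and simply cites \cite[Theorem 1.3]{colombo17} and \cite[Theorem 4.1, Corollary 4.4]{milman18a}; your localization-to-almost-minimality, blow-up, and Schauder-bootstrap outline is precisely the standard reasoning underlying those citations.
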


Let $Y_{1}\subset\R^{2}$ denote three half-lines meeting at a single point with $120$-degree angles between them.  Let $T'\subset\R^{3}$ be the one-dimensional boundary of a regular tetrahedron centered at the origin, and let $T_{2}\subset\R^{3}$ be the cone generated by $T'$, so that $T_{2}=\{rx\in\R^{3}\colon r\geq0,\, x\in T'\}$.

\begin{assumption}\label{as1}
The sets $\Omega_{1},\ldots\Omega_{m}\subset\R^{\adimn}$ satisfy the following conditions.  First, $\cup_{i=1}^{m}\Omega_{i}=\R^{\adimn}$, $\sum_{i=1}^{m}\gamma_{\adimn}(\Omega_{i})=1$.  Also, $\Sigma\colonequals\cup_{i=1}^{m}\partial\Omega_{i}$ can be written as the disjoint union $M_{\sdimn}\cup M_{\sdimn-1}\cup M_{\sdimn-2}\cup M_{\sdimn-3}$ where $0<\alpha<1$ and
\begin{itemize}
\item[(i)] $M_{\sdimn}$ is a locally finite union of embedded $C^{\infty}$ $\sdimn$-dimensional manifolds.
\item[(ii)] $M_{\sdimn-1}$ is a locally finite union of embedded $C^{\infty}$ $(\sdimn-1)$-dimensional manifolds, near which $\Sigma$ is locally diffeomorphic to $Y_{1}\times\R^{\sdimn-1}$.
\item[(iii)] $M_{\sdimn-2}$ is a locally finite union of embedded $C^{1,\alpha}$ $(\sdimn-2)$-dimensional manifolds, near which $\Sigma$ is locally diffeomorphic to $T_{2}\times\R^{\sdimn-2}$.
\item[(iv)] $M_{\sdimn-3}$ is relatively closed, $(\sdimn-3)$-rectifiable, with locally finite $(n-3)$-dimensional Hausdorff measure.
\end{itemize}
%And $\exists$ $\epsilon>0$ such that, for all $1\leq i\leq m$,
%$$\int_{\redb\Omega_{i}}\sup_{y\in B(x,\epsilon)}(1+\vnormt{y})\gamma_{\sdimn}(x)dx<\infty.$$
\end{assumption}

Below, when $\Sigma_{ij}\colonequals(\redb\Omega_{i})\cap(\redb\Omega_{j})$ for some $1\leq i<j\leq m$, we denote $\redS_{ij}\colonequals M_{\sdimn-2}\cap(\partial\Omega_{i})\cap(\partial\Omega_{j})$, where $M_{\sdimn-2}$ is defined in Assumption \ref{as1}.

%\begin{proof}
%Consider a sequence $\Omega_{1},\Omega_{2},\ldots\subset\R^{\adimn}$ of sets of locally finite surface area such that $\lim_{m\to\infty} \int_{\partial^{*}\Omega_{m}}$ is equal to the infimum of the Gaussian surface area over all sets of locally finite surface area.  Then, for any bounded open set $C$, $\sup_{m\geq1}\int_{(\partial^{*} \Omega_{m})\cap C}dx<\infty$, the compactness theorem for BV functions \cite[Theorem 3.23]{ambrosio00} ensures the existence of a Borel set $\Omega\subset\R^{\adimn}$ such that, there exists a subsequence of positive integers $m_{1}<m_{2}<\cdots$ such that $1_{\Omega_{m_{1}}},1_{\Omega_{m_{2}}},\ldots$ converges to $1_{\Omega}$ locally in $L_{1}(\R^{\adimn})$.
%From the Divergence Theorem,
%\begin{flalign*}
%\int_{\redA}\gamma_{\sdimn}(x)dx
%&=\sqrt{2\pi}\sup\Big\{\int_{\Omega}[\mathrm{div}(X(x))-\langle X(x),x\rangle]\gamma_{\sdimn}(x)dx\\
%&\quad\qquad\qquad\qquad\qquad\colon X\in C_{0}^{\infty}(B(0,r),\R^{\adimn}),\, r>0,\,\sup_{x\in\R^{\adimn}}\vnormt{X(x)}\leq1\Big\}.
%\end{flalign*}
%Therefore, $\Omega\mapsto\int_{\redA}\gamma_{\adimn}(x)dx$ is lower
%semicontinuous with respect to the local $L_{1}$ convergence of sets.  That is, $\int_{\redA}\gamma_{\adimn}(x)dx \leq\liminf_{p\to\infty}\int_{\redb \Omega_{m_{p}}}\gamma_{\sdimn}(x)dx$.  Therefore, $\int_{\redA}\gamma_{\adimn}(x)dx\leq \int_{\partial^{*}D}\gamma_{\adimn}(x)dx$ for any set $D\subset\R^{\adimn}$ of locally finite surface area, as desired.
%\end{proof}

\subsection{Submanifold Curvature}\label{seccurvature}

Here we cover some basic definitions from differential geometry of submanifolds of Euclidean space.

Let $\nabla$ denote the standard Euclidean connection, so that if $X,Y\in C_{0}^{\infty}(\R^{\adimn},\R^{\adimn})$, if $Y=(Y_{1},\ldots,Y_{\adimn})$, and if $u_{1},\ldots,u_{\adimn}$ is the standard basis of $\R^{\adimn}$, then $\nabla_{X}Y\colonequals\sum_{i=1}^{\adimn}(X (Y_{i}))u_{i}$.  Let $N$ be the outward pointing unit normal vector of an $\sdimn$-dimensional orientable hypersurface $\Sigma\subset\R^{\adimn}$.  For any vector $x\in\Sigma$, we write $x=x^{T}+x^{N}$, so that $x^{N}\colonequals\langle x,N\rangle N$ is the normal component of $x$, and $x^{T}$ is the tangential component of $x\in\Sigma$.

Let $e_{1},\ldots,e_{\sdimn}$ be a (local) orthonormal frame of $\Sigma\subset\R^{\adimn}$.  That is, for a fixed $x\in\Sigma$, there exists a neighborhood $U$ of $x$ such that $e_{1},\ldots,e_{\sdimn}$ is an orthonormal basis for the tangent space of $\Sigma$, for every point in $U$ \cite[Proposition 11.17]{lee03}.

Define the \embolden{mean curvature} of $\Sigma$ by
\begin{equation}\label{three0.5}
H\colonequals\mathrm{div}(N)=\sum_{i=1}^{\sdimn}\langle\nabla_{e_{i}}N,e_{i}\rangle.
\end{equation}

Define the \embolden{second fundamental form} $A=(a_{ij})_{1\leq i,j\leq\sdimn}$ of $\Sigma$ so that
\begin{equation}\label{three1}
a_{ij}=\langle\nabla_{e_{i}}e_{j},N\rangle,\qquad\forall\,1\leq i,j\leq \sdimn.
\end{equation}
Compatibility of the Riemannian metric says $a_{ij}=\langle\nabla_{e_{i}}e_{j},N\rangle=-\langle e_{j},\nabla_{e_{i}}N\rangle+ e_{i}\langle N,e_{j}\rangle=-\langle e_{j},\nabla_{e_{i}}N\rangle$, $\forall$ $1\leq i,j\leq \sdimn$.  So, multiplying by $e_{j}$ and summing this equality over $j$ gives
\begin{equation}\label{three2}
\nabla_{e_{i}}N=-\sum_{j=1}^{\sdimn}a_{ij}e_{j},\qquad\forall\,1\leq i\leq \sdimn.
\end{equation}

%Given a vector $v\in\R^{\adimn}$, define $v^{N}\colonequals \langle v,N\rangle N$, and define $v^{T}\colonequals v-v^{N}$, so that $v=v^{N}+v^{T}$.

Using $\langle\nabla_{N}N,N\rangle=0$,
\begin{equation}\label{three4}
H\stackrel{\eqref{three0.5}}{=}\sum_{i=1}^{\sdimn}\langle \nabla_{e_{i}} N,e_{i}\rangle
\stackrel{\eqref{three2}}{=}-\sum_{i=1}^{\sdimn}a_{ii}.
\end{equation}

For any vector field $X\in C_{0}^{\infty}(\R^{\adimn},\R^{\adimn})$, define the tangential divergence of $X$ on $\Sigma$ by
$$\mathrm{div}_{\tau}X\colonequals\sum_{i=1}^{\sdimn}\langle \nabla_{e_{i}}X,e_{i}\rangle.$$
When $\Sigma\colonequals\partial\Omega$ itself has a boundary that is a $C^{\infty}$ $(\sdimn-1)$-dimensional manifold, we let $\nu$ denote the unit normal of $\partial\Sigma$ pointing exterior to $\Sigma$.

\begin{remark}[{\cite[Page 6]{barchiesi16}}]\label{rk10}
Let $\Sigma$ be a $C^{\infty}$ $\sdimn$-dimensional manifold with boundary, and let $\redS$ denote the $(\sdimn-1)$-dimensional reduced boundary of $\Sigma$.  Assume that $\redS$ is also a $C^{\infty}$ manifold.  For any $x\in\redS$, let $\nu(x)\in\R^{\adimn}$ denote the exterior pointing normal vector to $\redS$ at $x$. The divergence theorem for hypersurfaces says, for any vector field $X\in C_{0}^{\infty}(\R^{\adimn},\R^{\adimn})$,
$$\int_{\Sigma}\mathrm{div}_{\tau}X(x)dx
=\int_{\Sigma}H(x)\langle X(x),N(x)\rangle dx
+\int_{\redS}\langle X,\nu\rangle dx.$$
\end{remark}
\begin{proof}
Write $X=\langle X,N\rangle N+(X-\langle X,N\rangle N)$.  Then by the usual divergence theorem and
\begin{flalign*}
\int_{\Sigma}\mathrm{div}_{\tau}Xdx
&=\int_{\Sigma}\mathrm{div}_{\tau}(\langle X,N\rangle N)+\mathrm{div}_{\tau}(X-\langle X,N\rangle N)dx\\
&=\int_{\Sigma}\langle X,N\rangle \mathrm{div}_{\tau}(N)+\langle N,\nabla\langle X,N\rangle\rangle dx+\int_{\redS}\langle (X-\langle X,N\rangle N),\nu\rangle dx\\
&\stackrel{\eqref{three4}}{=}\int_{\Sigma}H\langle X,N\rangle dx+\int_{\redS}\langle X,\nu\rangle dx.
\end{flalign*}

\end{proof}

\subsection{Colding-Minicozzi Theory for Mean Curvature Flow}

The Colding-Minicozzi theory \cite{colding12a,colding12} focuses on orientable $\sdimn$-dimensional $C^{\infty}$ hypersurfaces $\Sigma$ with $\partial\Sigma=\emptyset$ satisfying
\begin{equation}\label{one2}
H(x)=\langle x, N(x)\rangle,\qquad\forall\,x\in\Sigma.
\end{equation}
Below, we will often omit the $x$ arguments of $H$ and $N$ for brevity.  Here $H$ is chosen so that, if $r>0$, then the surface $r S^{\sdimn}$ satisfies $H(x)=n/r$ for all $x\in r S^{\sdimn}$.  A hypersurface $\Sigma$ satisfying \eqref{one2} is called a \embolden{self-shrinker}, since it is self-similar under the mean curvature flow.  Examples of self-shrinkers include a hyperplane through the origin, the sphere $\sqrt{\sdimn}S^{\sdimn}$, or more generally, round cylinders $\sqrt{k} S^{k}\times S^{\sdimn -k}$, where $0\leq k\leq n$, and also cones with zero mean curvature.

A key aspect of the Colding-Minicozzi theory is the study of eigenfunctions of\ the differential operator $L$, defined for any $C^{\infty}$ function $f\colon\Sigma\to\R$ by
\begin{equation}\label{three4.5}
L f\colonequals \Delta f-\langle x,\nabla f\rangle+f+\vnormt{A}^{2}f.
\end{equation}
\begin{equation}\label{three4.3}
\mathcal{L}f\colonequals \Delta f-\langle x,\nabla f\rangle.
\end{equation}
Note that there is a factor of $2$ difference between our definition of $L$ and the definition of $L$ in \cite{colding12a}.
Here $e_{1},\ldots,e_{\sdimn}$ is a (local) orthonormal frame for an orientable $C^{\infty}$ $\sdimn$-dimensional hypersurface $\Sigma\subset\R^{\adimn}$ with $\redS=\emptyset$, $\Delta\colonequals\sum_{i=1}^{\sdimn}\nabla_{e_{i}}\nabla_{e_{i}}$ be the Laplacian associated to $\Sigma$, $\nabla\colonequals\sum_{i=1}^{\sdimn}e_{i}\nabla_{e_{i}}$ is the gradient associated to $\Sigma$, $A=A_{x}$ is the second fundamental form of $\Sigma$ at $x$, and $\vnormt{A_{x}}^{2}$ is the sum of the squares of the entries of the matrix $A_{x}$.  Let $\mathrm{div}_{\tau}\colonequals\sum_{i=1}^{\sdimn}\nabla_{e_{i}}\langle\cdot,e_{i}\rangle$ be the (tangential) divergence of a vector field on $\Sigma$.  Note that $L$ is an Ornstein-Uhlenbeck-type operator.  In particular, if $\Sigma$ is a hyperplane, then $A_{x}=0$ for all $x\in\Omega$, so $L$ is exactly the usual Ornstein-Uhlenbeck operator, plus the identity map.  (More detailed definitions will be given in Section \ref{seccurvature} below.)

\begin{lemma}[\embolden{Linear Eigenfunction of $L$}, {\cite{mcgonagle15,barchiesi16}} {\cite[Lemma 4.2]{heilman17}}]\label{lemma45}
Let $\Sigma\subset\R^{\adimn}$ be an orientable $C^{\infty}$ $\sdimn$-dimensional hypersurface.  Let $\scon\in\R$.  Suppose
\begin{equation}\label{three0n}
H(x)=\langle x,N\rangle+\scon,\qquad\forall\,x\in\Sigma.
\end{equation}
Let $v\in\R^{\adimn}$.  Then
$$L\langle v,N\rangle=\langle v,N\rangle.$$
\end{lemma}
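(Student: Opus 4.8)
The plan is to compute $L\langle v,N\rangle$ directly from the definition \eqref{three4.5} by working in a local orthonormal frame $e_1,\ldots,e_n$ of $\Sigma$, exploiting the structure equations \eqref{three2} for the covariant derivative of $N$ together with the self-shrinker-type equation \eqref{three0n}. Write $g \colonequals \langle v, N\rangle$. First I would compute $\nabla_{e_i} g = \langle v, \nabla_{e_i} N\rangle = -\sum_j a_{ij}\langle v, e_j\rangle$, using \eqref{three2} and the fact that $v$ is a constant vector (so $\nabla_{e_i} v = 0$). This identifies $\nabla g$ with $-A(v^T)$ in the tangent space. Then I would differentiate again to get $\Delta g = \sum_i \nabla_{e_i}\nabla_{e_i} g$, which will produce a term involving $\nabla_{e_i} a_{ij}$ (i.e. the covariant derivative of the second fundamental form), a term involving $a_{ij}a_{jk}$ coming from differentiating $\langle v, e_j\rangle$, and curvature terms arising from the non-commutativity, all of which should be handled via the Codazzi equations and the Gauss equation for a hypersurface of Euclidean space (so the ambient curvature vanishes).

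The key algebraic input is the Simons-type identity for self-shrinkers: a standard computation shows that for $\Sigma$ satisfying \eqref{three0n}, the mean curvature $H$ satisfies $\mathcal{L}H = H + \vnorm{A}^2 H - \scon$, or more precisely $LH = \langle x, N\rangle + \scon \cdot(\text{something})$; but what I actually need is the analogous identity one dimension ``down'', namely that each component function $\langle v, N\rangle$ of the Gauss map satisfies the eigenvalue equation. Concretely, after assembling the pieces, I expect the Codazzi relation $\nabla_i a_{jk} = \nabla_j a_{ik}$ combined with the traced second Bianchi/Simons identity $\sum_i \nabla_i a_{ij} = \nabla_j H + (\text{lower order from the self-shrinker equation})$ to convert the $\nabla A$ terms into $\nabla H$ terms. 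Differentiating \eqref{three0n} gives $\nabla_{e_j} H = \nabla_{e_j}\langle x, N\rangle = \langle e_j, N\rangle + \langle x, \nabla_{e_j} N\rangle = \langle x^T, \nabla_{e_j} N\rangle = -\sum_k a_{jk}\langle x, e_k\rangle$ (the $\langle e_j, N\rangle$ term vanishes since $e_j \perp N$), which is exactly the kind of term that will combine with the $-\langle x, \nabla g\rangle$ term in \eqref{three4.5}.

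So the core of the argument reduces to the bookkeeping identity
\begin{equation*}
\Delta \langle v, N\rangle = -\langle v, \nabla H\rangle + \langle v, N\rangle \vnorm{A}^2 - \langle\nabla\langle v, N\rangle, x^T\rangle \cdot 0 + \cdots,
\end{equation*}
and then substituting $\nabla H$ via the differentiated self-shrinker equation. After these substitutions I expect the $\vnorm{A}^2 \langle v,N\rangle$ terms to cancel against the $\vnorm{A}^2 f$ term in $Lf$, the $-\langle x,\nabla f\rangle$ term to cancel against a term coming from $\nabla H$, and what survives to be precisely $\langle v, N\rangle$, giving $L\langle v,N\rangle = \langle v,N\rangle$. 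I would double-check the constant $\scon$ drops out: since $\scon$ is constant, $\nabla(\langle x,N\rangle + \scon) = \nabla\langle x,N\rangle$, so the presence of $\scon$ affects neither $\nabla H$ nor higher derivatives, only $H$ itself, and $H$ enters the final tally in a way that is already accounted for.

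The main obstacle is getting the Simons-type commutation identity exactly right — in particular tracking the curvature terms that appear when commuting $\nabla_{e_i}$ past $\nabla_{e_j}$ and when the frame $e_j$ is only locally defined (so $\nabla_{e_i} e_j$ has both tangential and normal parts, the normal part being $a_{ij}N$). One must be careful that the tangential part of $\nabla_{e_i}e_j$ contributes to $\Delta$ via the connection Laplacian correction, and that the Codazzi equation is applied with the correct sign convention matching \eqref{three1}. Since this is essentially the computation already carried out in \cite{colding12a} and \cite{mcgonagle15,barchiesi16,heilman17} (with the noted factor-of-2 convention difference), I would follow those references for the precise bookkeeping rather than rederiving every curvature term from scratch; the only genuinely new point here is verifying that adding the constant $\scon$ to the self-shrinker equation \eqref{one2} changes nothing in the conclusion, which follows since $\scon$ is killed by every derivative.
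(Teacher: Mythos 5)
Your proposal is correct and follows essentially the same route as the computation the paper cites (from \cite[Lemma 4.2]{heilman17} and \cite{mcgonagle15,barchiesi16}): compute $\nabla_{e_i}\langle v,N\rangle=-\sum_j a_{ij}\langle v,e_j\rangle$, use Codazzi to trade $\sum_i\nabla_i a_{ij}$ for $-\nabla_j H$, differentiate \eqref{three0n} to get $\nabla_{e_j}H=-\sum_k a_{jk}\langle x,e_k\rangle$, and use symmetry of $A$ so that $\langle v^T,\nabla H\rangle=\langle x,\nabla\langle v,N\rangle\rangle$ cancels the drift term while the $\vnormt{A}^2$ terms cancel and $\scon$ is annihilated by differentiation. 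Only caveat: the signs in your intermediate display for $\Delta\langle v,N\rangle$ are flipped relative to the paper's conventions \eqref{three1}--\eqref{three4} (the correct identity is $\Delta\langle v,N\rangle=\langle v^{T},\nabla H\rangle-\vnormt{A}^{2}\langle v,N\rangle$), but you flag this bookkeeping issue yourself and it does not affect the validity of the argument.
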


\section{First and Second Variation}

We will apply the calculus of variations to solve Problem \ref{prob1}. Here we present the rudiments of the calculus of variations.

Some of the results in this section are well known to experts in the calculus of variations, and many of these results were re-proven in \cite{barchiesi16}, or adapted from \cite{hutchings02}.

Let $\Omega\subset\R^{\adimn}$ be an $(\adimn)$-dimensional $C^{2}$ submanifold with reduced boundary $\Sigma\colonequals\redb \Omega$.  Let $N\colon\redA\to S^{\sdimn}$ be the unit exterior normal to $\redA$.  Let $X\colon\R^{\adimn}\to\R^{\adimn}$ be a vector field.

Let $\mathrm{div}$ denote the divergence of a vector field.  We write $X$ in its components as $X=(X_{1},\ldots,X_{\adimn})$, so that $\mathrm{div}X=\sum_{i=1}^{\adimn}\frac{\partial}{\partial x_{i}}X_{i}$.  Let $\Psi\colon\R^{\adimn}\times(-1,1)\to\R^{\adimn}$ such that
\begin{equation}\label{nine2.3}
\Psi(x,0)=x,\qquad\qquad\frac{d}{ds}\Psi(x,s)=X(\Psi(x,s)),\quad\forall\,x\in\R^{\adimn},\,s\in(-1,1).
\end{equation}
For any $s\in(-1,1)$, let $\Omega^{(s)}\colonequals\Psi(\Omega,s)$.  Note that $\Omega^{(0)}=\Omega$.  Let $\Sigma^{(s)}\colonequals\redb\Omega^{(s)}$, $\forall$ $s\in(-1,1)$.
\begin{definition}
We call $\{\Omega^{(s)}\}_{s\in(-1,1)}$ as defined above a \embolden{variation} of $\Omega\subset\R^{\adimn}$.  We also call $\{\Sigma^{(s)}\}_{s\in(-1,1)}$ a \embolden{variation} of $\Sigma=\redb\Omega$.
\end{definition}

\begin{lemma}[\embolden{First Variation}]\label{lemma10}  Let $X\in C_{0}^{\infty}(\R^{\adimn},\R^{\adimn})$.  Let $f(x)=\langle X(x),N(x)\rangle$ for any $x\in\redA$.  Then
\begin{equation}\label{nine1}
\frac{d}{ds}|_{s=0}\gamma_{\adimn}(\Omega^{(s)})=\int_{\redA}f(x) \gamma_{\adimn}(x)dx.
\end{equation}
\begin{equation}\label{nine2}
\frac{d}{ds}|_{s=0}\int_{\Sigma^{(s)}}\gamma_{\sdimn}(x)dx
=\int_{\Sigma}(H(x)-\langle N(x),x\rangle)f(x)\gamma_{\sdimn}(x)dx
+\int_{\redS}\langle X,\nu\rangle\gamma_{\sdimn}(x)dx.
\end{equation}
\end{lemma}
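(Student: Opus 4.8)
The plan is to compute the two derivatives by differentiating under the integral sign and using the divergence theorem. For \eqref{nine1}, I would change variables: $\gamma_{\adimn}(\Omega^{(s)})=\int_{\Omega^{(s)}}\gamma_{\adimn}(x)dx=\int_{\Omega}\gamma_{\adimn}(\Psi(x,s))\,\mathrm{Jac}(\Psi(\cdot,s))(x)\,dx$. Differentiating at $s=0$, one gets a term $\langle\nabla\gamma_{\adimn}(x),X(x)\rangle$ plus $\gamma_{\adimn}(x)\,\mathrm{div}X(x)$, since $\frac{d}{ds}|_{s=0}\mathrm{Jac}(\Psi(\cdot,s))(x)=\mathrm{div}X(x)$ by \eqref{nine2.3}; together these equal $\mathrm{div}(\gamma_{\adimn}X)(x)$. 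Then the Euclidean divergence theorem on $\Omega$ turns $\int_{\Omega}\mathrm{div}(\gamma_{\adimn}X)\,dx$ into $\int_{\redA}\gamma_{\adimn}(x)\langle X(x),N(x)\rangle\,dx=\int_{\redA}f(x)\gamma_{\adimn}(x)\,dx$, which is the claim. (One should note the compact-support hypothesis $X\in C_0^\infty$ makes the boundary terms at infinity vanish and legitimizes differentiating under the integral.)

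For \eqref{nine2}, the analogous strategy applies to the surface integral $\int_{\Sigma^{(s)}}\gamma_{\sdimn}(x)dx$. I would use the standard first-variation-of-area formula for a hypersurface moving with velocity field $X$: pulling back to $\Sigma$, the area form evolves with derivative $\mathrm{div}_\tau X$ (the tangential divergence along $\Sigma$), so that
\begin{equation*}
\frac{d}{ds}\Big|_{s=0}\int_{\Sigma^{(s)}}\gamma_{\sdimn}(x)dx
=\int_{\Sigma}\big(\langle\nabla\gamma_{\sdimn}(x),X(x)\rangle+\gamma_{\sdimn}(x)\,\mathrm{div}_\tau X(x)\big)dx.
\end{equation*}
Since $\nabla\gamma_{\sdimn}(x)=-x\,\gamma_{\sdimn}(x)$, the first term is $-\gamma_{\sdimn}(x)\langle x,X(x)\rangle$. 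For the second term I would apply Remark~\ref{rk10} (the divergence theorem for hypersurfaces) to the vector field $\gamma_{\sdimn}X$: writing $\mathrm{div}_\tau(\gamma_{\sdimn}X)=\gamma_{\sdimn}\,\mathrm{div}_\tau X+\langle\nabla_\tau\gamma_{\sdimn},X\rangle$ and rearranging, one gets
\begin{equation*}
\int_{\Sigma}\gamma_{\sdimn}\,\mathrm{div}_\tau X\,dx
=\int_{\Sigma}H\langle X,N\rangle\gamma_{\sdimn}\,dx-\int_{\Sigma}\langle\nabla_\tau\gamma_{\sdimn},X\rangle\,dx+\int_{\redS}\langle X,\nu\rangle\gamma_{\sdimn}\,dx,
\end{equation*}
where $\nabla_\tau$ denotes the tangential gradient. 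Now $\langle\nabla\gamma_{\sdimn},X\rangle=\langle\nabla_\tau\gamma_{\sdimn},X\rangle+\langle\nabla\gamma_{\sdimn},N\rangle\langle X,N\rangle$, so combining the two contributions the tangential pieces cancel and the normal pieces assemble. Using $\langle\nabla\gamma_{\sdimn}(x),N(x)\rangle=-\langle x,N(x)\rangle\gamma_{\sdimn}(x)$ and $f=\langle X,N\rangle$, everything collapses to
\begin{equation*}
\int_{\Sigma}(H(x)-\langle x,N(x)\rangle)f(x)\gamma_{\sdimn}(x)\,dx+\int_{\redS}\langle X,\nu\rangle\gamma_{\sdimn}(x)\,dx,
\end{equation*}
as desired.

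The main obstacle is not any single algebraic identity but the bookkeeping of tangential versus full gradients of $\gamma_{\sdimn}$ and making sure the boundary term $\int_{\redS}\langle X,\nu\rangle\gamma_{\sdimn}$ is produced with the correct sign and the correct conormal $\nu$; this is exactly where Remark~\ref{rk10} must be invoked rather than the ordinary divergence theorem, since $\Sigma=\redb\Omega$ has a nonempty boundary $\redS$. A secondary technical point is justifying the first-variation-of-area formula for the reduced boundary under the regularity hypotheses in force (here $\Omega$ is assumed $C^2$, so the classical formula applies on the smooth part, and the lower-dimensional strata contribute nothing to the $\sdimn$-dimensional area); I would simply cite the standard references (e.g.\ as in \cite{barchiesi16,hutchings02}) for this. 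Everything else is a routine differentiation-under-the-integral-sign argument enabled by the compact support of $X$.
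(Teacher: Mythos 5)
Your proposal is correct and follows essentially the same route as the paper: both derivations rest on the Jacobian formulas $\frac{d}{ds}|_{s=0}J\Psi=\mathrm{div}X$ and $\frac{d}{ds}|_{s=0}J_{\tau}\Psi=\mathrm{div}_{\tau}X$, the identity $\nabla\gamma_{\sdimn}(x)=-x\gamma_{\sdimn}(x)$, and the hypersurface divergence theorem of Remark \ref{rk10}. The only cosmetic difference is that the paper splits $X$ into its normal and tangential parts before applying Remark \ref{rk10}, whereas you apply it to $\gamma_{\sdimn}X$ directly and instead split $\nabla\gamma_{\sdimn}$ into tangential and normal components; the two bookkeeping schemes cancel the same terms and yield the same boundary integral $\int_{\redS}\langle X,\nu\rangle\gamma_{\sdimn}\,dx$.
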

\begin{proof}
We first prove \eqref{nine1}.  Let $J\Psi(x,s)\colonequals\abs{\mathrm{det}(D\Psi(x,s))}$ be the Jacobian determinant of $\Psi$ $\forall$ $x\in\R^{\adimn}$, $\forall$ $s\in(-1,1)$.  Then \cite[Equation (2.28)]{chokski07} says
\begin{equation}\label{nine1.0}
\frac{d}{ds}|_{s=0}J\Psi(x,s)=\mathrm{div}X(x),\qquad\forall\,x\in\redA.
\end{equation}
So, the Chain Rule, $J\Psi(x,0)=1$ (which follows by \eqref{nine2.3}), and \eqref{nine2.3} imply
\begin{equation}\label{nine1.01}
\begin{aligned}
\frac{d}{ds}|_{s=0}\gamma_{\adimn}(\Omega^{(s)})
&=\frac{d}{ds}|_{s=0}\int_{\Omega}J\Psi(x,s)\gamma_{\adimn}(\Psi(x,s))dx
=\int_{\Omega}(\mathrm{div}(X(x))-\langle X(x),x\rangle)d\gamma_{\adimn}(x)\\
&=\int_{\Omega}\mathrm{div}(X(x)\gamma_{\adimn}(x))dx
=\int_{\redA}\langle X(x),N(x)\rangle\gamma_{\adimn}(x)dx.
\end{aligned}
\end{equation}
In the last line, we used the divergence theorem.

We now prove \eqref{nine2}.  For any $s\in(-1,1)$, let $J_{\tau}\Psi(x,s)$ be the Jacobian determinant of $\Psi(x,s)$, where the domain of $\Psi$ in $x$ is restricted to $x\in\redA$.  We refer to $J_{\tau}\Psi(x,s)$ as the tangential Jacobian determinant of $\Psi$.  Then \cite[Equation (2.39)]{chokski07}, says
\begin{equation}\label{nine1.1}
\frac{d}{ds}|_{s=0}J_{\tau}\Psi(x,s)=\mathrm{div}_{\tau}X(x),\qquad\forall\,x\in\redA.
\end{equation}
So, using the Chain Rule, $J_{\tau}\Psi(x,0)=1$ $\forall$ $x\in\redA$ (which follows by \eqref{nine2.3}), and \eqref{nine2.3},
\begin{flalign*}
\frac{d}{ds}|_{s=0}\int_{\redb \Omega^{(s)}}\gamma_{\sdimn}(x)dx
&=\frac{d}{ds}|_{s=0}\left[\int_{\redA}J_{\tau}\Psi(x,s)\gamma_{\sdimn}(\Psi(x,s))dx\right]\\
&=\int_{\redA}(\mathrm{div}_{\tau}X(x)-\langle X(x),x\rangle)\gamma_{\sdimn}(x)dx.
\end{flalign*}
Let $x\in\redA$.  Applying the product rule, and writing $x=\langle x,N\rangle N+(x-\langle x,N\rangle N)\equalscolon x^{N}+x^{T}$,
$$
\mathrm{div}_{\tau}(X(x)\gamma_{\sdimn}(x))
=\gamma_{\sdimn}(x)\mathrm{div}_{\tau}(X(x))+\langle\nabla \gamma_{\sdimn}(x),X(x)\rangle
=\gamma_{\sdimn}(x)\Big(\mathrm{div}_{\tau}(X(x))-\langle x^{T},X(x)\rangle \Big).
$$
Applying this equality to $X^{N}$ and $X^{T}$ separately,
$$
\mathrm{div}_{\tau}(X^{N}(x)\gamma_{\sdimn}(x))
=\gamma_{\sdimn}(x)\mathrm{div}_{\tau}(X^{N}(x)),
$$
$$
\mathrm{div}_{\tau}(X^{T}(x)\gamma_{\sdimn}(x))
=\gamma_{\sdimn}(x)\Big(\mathrm{div}_{\tau}(X^{T}(x))-\langle x,X^{T}(x)\rangle \Big).
$$
So, from the above and Remark \ref{rk10},
\begin{flalign*}
&\frac{d}{ds}|_{s=0}\int_{\redb \Omega^{(s)}}\gamma_{\sdimn}(x)dx\\
&\qquad=\int_{\redA}\mathrm{div}_{\tau}(X^{N}(x)\gamma_{\sdimn}(x))-\langle X(x),N(x)\rangle\langle N(x),x\rangle\gamma_{\sdimn}(x)+\mathrm{div}_{\tau}(X^{T}(x)\gamma_{\sdimn}(x))dx\\
&\qquad=\int_{\redA}f(x)(H(x)-\langle N(x),x\rangle)\gamma_{\sdimn}(x))+\int_{\redS}\langle X,\nu\rangle\gamma_{\sdimn}(x))dx.
\end{flalign*}
\end{proof}
\begin{remark}\label{rk23}
Let $\Omega_{1},\ldots,\Omega_{m}\subset\R^{\adimn}$ be disjoint sets with $\cup_{i=1}^{m}=\R^{\adimn}$.  Let $u_{1},\ldots,u_{m}$ denote the standard basis of $\R^{m}$.  Then we can write \eqref{nine1} in the following vector form, using $N_{ij}=-N_{ji}$ $\forall$ $1\leq i<j\leq m$,
\begin{flalign*}
\frac{d}{ds}|_{s=0}\sum_{i=1}^{m}\gamma_{\adimn}(\Omega_{i}^{(s)})u_{i}
&=\sum_{i=1}^{m}u_{i}\sum_{j\neq i}\int_{\Sigma_{ij}}\langle X, N_{ij}\rangle\gamma_{\adimn}(x)dx\\
&=\sum_{1\leq i<j\leq m}(u_{i}-u_{j})\int_{\Sigma_{ij}}\langle X, N_{ij}\rangle\gamma_{\adimn}(x)dx\\
&=\frac{1}{\sqrt{2\pi}}\sum_{1\leq i<j\leq m}(u_{i}-u_{j})\frac{\int_{\Sigma_{ij}}\langle X, N_{ij}\rangle\gamma_{\sdimn}(x)dx}{\gamma_{\sdimn}(\Sigma_{ij})}\gamma_{\sdimn}(\Sigma_{ij}).
\end{flalign*}

\end{remark}

\begin{definition}\label{defnote}
Below, for any $1\leq i<j\leq m$, we denote $\Sigma_{ij}\colonequals(\redb\Omega_{i})\cap(\redb\Omega_{j})$, and
$$C\colonequals\bigcup_{1\leq i<j<k\leq m}(\redS_{ij})\cap (\redS_{jk})\cap (\redS_{ki}).$$
We also let $X\in C_{0}^{\infty}(\R^{\adimn},\R^{\adimn})$ and for any $1\leq i<j\leq m$, we denote $f_{ij}(x)\colonequals\langle X(x),N_{ij}(x)\rangle$ for all $x\in\Sigma_{ij}$.  Recall that $N_{ij}$ is the unit normal vector pointing from $\Omega_{i}$ into $\Omega_{j}$, and $H_{ij}\colonequals\mathrm{div}(N_{ij})$ is the mean curvature of $N_{ij}$.  And $\nu_{ij}$ is the unit normal to $\redb\Sigma_{ij}$ pointing exterior to $\Sigma_{ij}$.
\end{definition}

\begin{figure}[h]\label{notationfig}
\centering
\def\svgwidth{.4\textwidth}
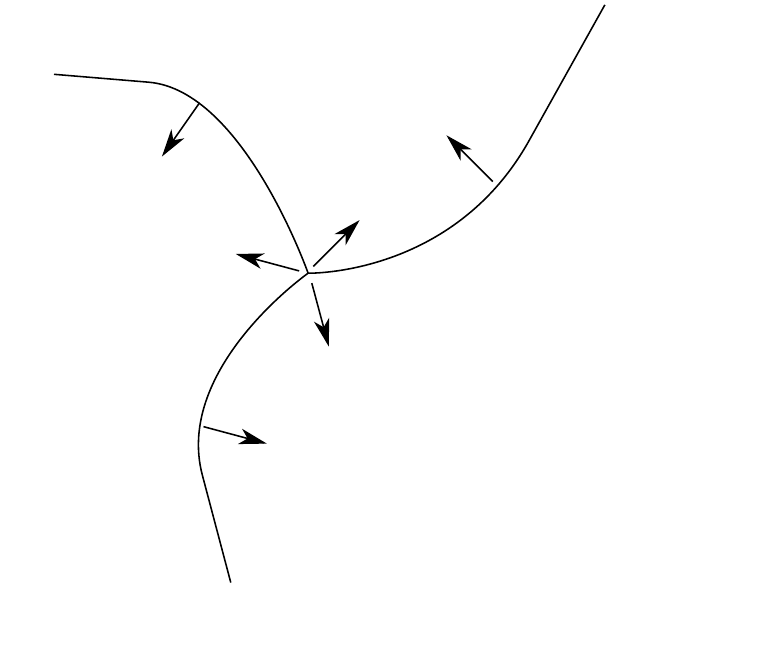
\caption{Notation for sets and normal vectors.}
\end{figure}

The following Lemma can be compared with \cite[Lemma 3.1]{hutchings02}.

\begin{lemma}[\embolden{First Variation for Minimizers}]\label{lemma24}
Suppose $\Omega_{1},\ldots,\Omega_{m}$ minimize Problem \ref{prob1}.  Then $\forall$ $1\leq i<j\leq m$, $\exists$ $\lambda_{ij}\in\R$ such that
\begin{equation}\label{nine5.3}
\left.
\begin{aligned}
H_{ij}(x)&=\langle x,N_{ij}(x)\rangle+\lambda_{ij},\qquad\forall\,x\in\Sigma_{ij},\quad\forall\,1\leq i<j\leq m\\
0&=\lambda_{ij}+\lambda_{jk}+\lambda_{ki},\qquad\forall\,1\leq i<j<k\leq m\,\,\mathrm{such}\,\,\mathrm{that}\,\,\Sigma_{ij}\cap\Sigma_{jk}\cap\Sigma_{ki}\neq\emptyset\\
0&=\nu_{ij}+\nu_{jk}+\nu_{ki},\qquad\forall\,1\leq i<j<k\leq m\,\,\mathrm{such}\,\,\mathrm{that}\,\,\Sigma_{ij}\cap\Sigma_{jk}\cap\Sigma_{ki}\neq\emptyset.
\end{aligned}
\right\}
\end{equation}
\end{lemma}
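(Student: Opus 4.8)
The plan is to derive each of the three conclusions in \eqref{nine5.3} from the first variation formula \eqref{nine2} together with the volume constraint in \eqref{nine1}, exploiting the fact that $\Omega_{1},\ldots,\Omega_{m}$ minimize Problem \ref{prob1}.

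First I would establish the curvature equation $H_{ij}=\langle x,N_{ij}\rangle+\lambda_{ij}$ on the smooth part $M_{\sdimn}$ of each interface $\Sigma_{ij}$. The idea is to take vector fields $X\in C_{0}^{\infty}(\R^{\adimn},\R^{\adimn})$ that are compactly supported away from the singular set $C$, so that the boundary term $\int_{\redb\Sigma_{ij}}\langle X,\nu_{ij}\rangle\gamma_{\sdimn}\,dx$ in \eqref{nine2} vanishes. Summing \eqref{nine2} over all pairs $1\le i<j\le m$ gives that the first variation of total Gaussian surface area equals $\sum_{i<j}\int_{\Sigma_{ij}}(H_{ij}-\langle N_{ij},x\rangle)\langle X,N_{ij}\rangle\gamma_{\sdimn}\,dx$, while by Remark \ref{rk23} the variation of the volume vector $\sum_i\gamma_{\adimn}(\Omega_i^{(s)})u_i$ is a corresponding sum of $(u_i-u_j)\int_{\Sigma_{ij}}\langle X,N_{ij}\rangle\gamma_{\adimn}\,dx$. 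Minimality under the constraint that all $m$ volumes are fixed means: whenever a variation kills the volume vector, it cannot decrease surface area, so the first variation of surface area must vanish. This is a Lagrange multiplier argument: there exist constants $c_1,\ldots,c_m\in\R$ (the multipliers for the $m$ volume constraints, though only $m-1$ are independent since $\sum_i\gamma_{\adimn}(\Omega_i)=1$ is automatic) such that for all admissible $X$,
\[
\sum_{i<j}\int_{\Sigma_{ij}}\bigl(H_{ij}-\langle N_{ij},x\rangle\bigr)\langle X,N_{ij}\rangle\gamma_{\sdimn}\,dx
=\sum_{i<j}(c_i-c_j)\frac{1}{\sqrt{2\pi}}\int_{\Sigma_{ij}}\langle X,N_{ij}\rangle\gamma_{\sdimn}\,dx.
\]
Since $X$ is arbitrary and the $\Sigma_{ij}$ are essentially disjoint on their smooth parts, localizing $X$ near a point of $M_{\sdimn}\cap\Sigma_{ij}$ forces $H_{ij}(x)-\langle N_{ij}(x),x\rangle=\lambda_{ij}$ pointwise, where $\lambda_{ij}\colonequals(c_i-c_j)/\sqrt{2\pi}$. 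This proves the first line of \eqref{nine5.3} and simultaneously, from the antisymmetry $c_i-c_j=-(c_j-c_i)$ and the cocycle identity $(c_i-c_j)+(c_j-c_k)+(c_k-c_i)=0$, gives $\lambda_{ij}+\lambda_{jk}+\lambda_{ki}=0$ whenever the triple junction $\Sigma_{ij}\cap\Sigma_{jk}\cap\Sigma_{ki}$ is nonempty (indeed the $\lambda$'s are defined globally by the multipliers, so this holds for all triples, but it is only meaningful/needed when the junction is nonempty).

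Next I would prove the balancing condition $\nu_{ij}+\nu_{jk}+\nu_{ki}=0$ along $\redS_{ij}\cap\redS_{jk}\cap\redS_{ki}$. Here I keep the full first variation formula \eqref{nine2} with the boundary terms, and sum over all pairs. The interior terms now vanish because we already know $H_{ij}-\langle N_{ij},x\rangle=\lambda_{ij}$, and we choose $X$ with $\sum_{i<j}\lambda_{ij}\int_{\Sigma_{ij}}\langle X,N_{ij}\rangle\gamma_{\sdimn}=0$ — or more simply, we first subtract off the multiplier contribution as above so that the net first variation of the constrained functional is purely the sum of boundary integrals $\sum_{i<j}\int_{\redS_{ij}}\langle X,\nu_{ij}\rangle\gamma_{\sdimn}\,dx$ over the triple-junction set $C$. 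Again by minimality this must vanish for every admissible $X$; localizing $X$ near a generic point $x_0$ of a codimension-two junction $\redS_{ij}\cap\redS_{jk}\cap\redS_{ki}$ and using that near such a point exactly the three sheets $\Sigma_{ij},\Sigma_{jk},\Sigma_{ki}$ meet (by the $Y_1\times\R^{\sdimn-1}$ local structure in Assumption \ref{as1}(ii)–(iii), the regularity input from Lemma \ref{lemma52.6}) gives $\langle X(x_0),\nu_{ij}(x_0)+\nu_{jk}(x_0)+\nu_{ki}(x_0)\rangle=0$ for all vectors $X(x_0)\in\R^{\adimn}$, hence $\nu_{ij}+\nu_{jk}+\nu_{ki}=0$ there. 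One should also note the orientation bookkeeping: one must check that with the convention $N_{ij}$ points from $\Omega_i$ into $\Omega_j$ and $\nu_{ij}$ is the outward conormal of $\Sigma_{ij}$, the three conormals at a $Y_1$ junction really do sum consistently; this is a matter of fixing signs carefully via $N_{ij}=-N_{ji}$.

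The main obstacle I expect is the justification that the Lagrange multiplier / first-variation argument is valid despite non-compactness and the presence of the lower-dimensional singular strata $M_{\sdimn-1},M_{\sdimn-2},M_{\sdimn-3}$. Two points need care: (1) showing that for a given smooth, compactly supported variation one can always correct it to an exactly volume-preserving variation without changing the leading-order surface area variation — this is the standard trick of adding a second family of vector fields spanning the $m-1$-dimensional space of volume changes, requiring the first variation to be nondegenerate in those directions, which follows because the interfaces $\Sigma_{ij}$ are $\sdimn$-dimensional with positive Gaussian area (so the map $X\mapsto$ volume vector is surjective onto the constraint space); and (2) ensuring no contribution arises from the strata $M_{\sdimn-2},M_{\sdimn-3}$ — for the curvature equation we simply take $X$ supported away from all of $C$, and for the balancing condition we use that $M_{\sdimn-2}$ has finite $(\sdimn-2)$-Hausdorff measure and $M_{\sdimn-3}$ has finite $(\sdimn-3)$-measure (Assumption \ref{as1}(iii)–(iv)), so these have zero $(\sdimn-1)$-dimensional measure and contribute nothing to the boundary integrals, and standard density arguments let us localize at $\mathcal{H}^{\sdimn-2}$-almost every point of the junction. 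Everything else — the two integration-by-parts identities — is already packaged in Lemma \ref{lemma10} and Remark \ref{rk23}, so modulo these regularity technicalities (which are exactly what Lemma \ref{lemma52.6} and Assumption \ref{as1} are there to supply) the proof is a clean Lagrange-multiplier deduction.
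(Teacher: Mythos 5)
Your proposal is correct, and for the first two assertions of \eqref{nine5.3} it is essentially the paper's argument in expanded form: the paper simply says to choose $X$ supported near two or three points, which is exactly the localization that establishes your Lagrange multipliers $c_{1},\ldots,c_{m}$ with $\lambda_{ij}=(c_{i}-c_{j})/\sqrt{2\pi}$ (compare Remark \ref{rk27}, which records precisely this cocycle structure; note your version gives the middle line of \eqref{nine5.3} for \emph{all} triples, which is harmless and slightly stronger than claimed). Where you genuinely diverge is the third assertion. The paper does not prove $\nu_{ij}+\nu_{jk}+\nu_{ki}=0$ variationally at all; it imports it from the regularity theory via Assumption \ref{as1}(ii), i.e.\ from the cited results of Colombo et al.\ and Milman--Neeman, which give the local $Y_{1}\times\R^{\sdimn-1}$ structure with $120^{\circ}$ angles at the triple junction. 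You instead re-derive it from the first variation: after the interior terms cancel (by the already-established cocycle identity for $\lambda_{ij}$ together with the volume-preserving correction, which can be placed away from the junction), minimality forces the boundary integral $\int_{C}\langle X,\nu_{ij}+\nu_{jk}+\nu_{ki}\rangle\gamma_{\sdimn}$ to vanish for arbitrary localized $X$. Your route is more self-contained and is in fact how the $120^{\circ}$ condition is proved in the regularity literature, but it still quietly relies on Assumption \ref{as1}(ii) to know that exactly three sheets meet along $C$ with well-defined conormals so that the boundary term in \eqref{nine2} makes sense; the paper's citation buys brevity at the cost of leaving the balancing condition as an external input.
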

\begin{proof}
From Lemma \ref{lemma52.6}, Assumption \ref{as1} holds.  From \eqref{nine2}, if $X\in C_{0}^{\infty}(\R^{\adimn},\R^{\adimn})$
\begin{flalign*}
&\frac{d}{ds}|_{s=0}\sum_{1\leq i<j\leq m}\int_{\Sigma_{ij}^{(s)}}\gamma_{\sdimn}(x)dx\\
&\qquad\qquad=\sum_{1\leq i<j\leq m}\int_{\Sigma_{ij}}(H_{ij}-\langle N_{ij},x\rangle)f_{ij}\gamma_{\sdimn}(x)dx
+\int_{C}\langle X,\sum_{1\leq i<j\leq m}\nu_{ij}\rangle \gamma_{\sdimn}(x)dx.
\end{flalign*}
Choosing $X$ to be supported in the neighborhood of a given $x\in\Sigma_{ij}$ implies the first part of \eqref{nine5.3}.  The final part of \eqref{nine5.3} follows by Lemma \ref{lemma52.6}, i.e. Assumption \ref{as1}(ii).  Choosing $X$ to be supported in the neighborhood of a given $x\in\Sigma_{ij}\cap\Sigma_{jk}\cap\Sigma_{ki}$ implies the second part of \eqref{nine5.3}.
\end{proof}
\begin{remark}[{\cite[Theorem 4.9(ii)]{milman18a}}]\label{rk27}
Let $\Omega_{1},\ldots,\Omega_{m}$ minimize Problem \ref{prob1}.  The middle condition of \eqref{nine5.3} is equivalent to the existence of $\lambda\colonequals(\lambda_{1},\ldots,\lambda_{m})\in\R^{m}$ such that $\lambda_{ij}=\lambda_{i}-\lambda_{j}$ for all $1\leq i<j\leq m$, and with $\sum_{i=1}^{m}\lambda_{i}=0$.  Then, combining Lemmas \ref{lemma10}, \ref{lemma24}, Assumption \ref{as1}, and $N_{ij}=-N_{ji}$ for all $1\leq i<j\leq m$,
\begin{flalign*}
&\frac{d}{ds}|_{s=0}\sum_{1\leq i<j\leq m}\int_{\Sigma_{ij}^{(s)}}\gamma_{\sdimn}(x)dx\\
&\stackrel{\eqref{nine2}}{=}\sum_{1\leq i<j\leq m}\int_{\Sigma_{ij}}(H_{ij}-\langle x,N_{ij}\rangle)\langle X,N_{ij}\rangle\gamma_{\sdimn}(x)dx
=\sum_{1\leq i<j\leq m}\lambda_{ij}\int_{\Sigma_{ij}}\langle X,N_{ij}\rangle\gamma_{\sdimn}(x)dx\\
&=\sum_{1\leq i<j\leq m}(\lambda_{i}-\lambda_{j})\int_{\Sigma_{ij}}\langle X,N_{ij}\rangle\gamma_{\sdimn}(x)dx
=\sum_{1\leq i,j\leq m\colon i\neq j}\lambda_{i}\int_{\Sigma_{ij}}\langle X,N_{ij}\rangle\gamma_{\sdimn}(x)dx\\
&=\sum_{i=1}^{m}\lambda_{i}\sum_{1\leq j\leq m\colon j\neq i}\int_{\Sigma_{ij}}\langle X,N_{ij}\rangle\gamma_{\sdimn}(x)dx
\stackrel{\eqref{nine1}}{=}\sum_{i=1}^{m}\lambda_{i}\sqrt{2\pi}\frac{d}{ds}|_{s=0}\gamma_{\adimn}(\Omega_{i})
=\sqrt{2\pi}\langle\lambda,V'(0)\rangle,
\end{flalign*}
where $V(s)\colonequals(\gamma_{\adimn}(\Omega_{1}^{(s)}),\ldots,\gamma_{\adimn}(\Omega_{m}^{(s)}))$ for all $s\in(-1,1)$.  Similarly, by taking another derivative in $s$, we get
$$
\sum_{1\leq i<j\leq m}\lambda_{ij}\frac{d^{2}}{ds^{2}}|_{s=0}\int_{\Sigma_{ij}^{(s)}}\langle X,N_{ij}\rangle\gamma_{\sdimn}(x)dx
=\sqrt{2\pi}\langle\lambda,V''(0)\rangle.
$$
%\begin{flalign*}
%&\sum_{1\leq i<j\leq m}\lambda_{ij}\frac{d}{ds}|_{s=0}\int_{\Sigma_{ij}^{(s)}}\langle X,N_{ij}\rangle\gamma_{\sdimn}(x)dx\\
%&=\sum_{1\leq i<j\leq m}(\lambda_{i}-\lambda_{j})\frac{d}{ds}|_{s=0}\int_{\Sigma_{ij}^{(s)}}\langle X,N_{ij}\rangle\gamma_{\sdimn}(x)dx
%=\sum_{1\leq i,j\leq m\colon i\neq j}\lambda_{i}\frac{d}{ds}|_{s=0}\int_{\Sigma_{ij}^{(s)}}\langle X,N_{ij}\rangle\gamma_{\sdimn}(x)dx\\
%&=\sum_{i=1}^{m}\lambda_{i}\sum_{1\leq j\leq m\colon j\neq i}\frac{d}{ds}|_{s=0}\int_{\Sigma_{ij}^{(s)}}\langle X,N_{ij}\rangle\gamma_{\sdimn}(x)dx
%\stackrel{\eqref{nine1}}{=}\sum_{i=1}^{m}\lambda_{i}\sqrt{2\pi}\frac{d^{2}}{ds^{2}}|_{s=0}\gamma_{\adimn}(\Omega_{i})
%=\sqrt{2\pi}\langle\lambda,V''(0)\rangle,
%\end{flalign*}
\end{remark}

For any $1\leq i<j\leq m$, let $f\colon C_{0}^{\infty}(\Sigma_{ij})\to\R$ and define

\begin{equation}\label{one3.5n}
L_{ij}f(x)\colonequals \Delta f(x)-\langle x,\nabla f(x)\rangle+\vnormt{A_{x}}^{2}f(x)+f(x),\qquad\forall\,x\in\Sigma_{ij}.
\end{equation}
\begin{equation}\label{one3.5o}
\mathcal{L}_{ij}f(x)\colonequals \Delta f(x)-\langle x,\nabla f(x)\rangle,\qquad\forall\,x\in\Sigma_{ij}.
\end{equation}

The Lemma below can be compared with the corresponding \cite[Proposition 3.3]{hutchings02}.

\begin{lemma}[\embolden{Second Variation of Gaussian Surface Area for Minimizers}]\label{lemma21}
Let $X\in C_{0}^{\infty}(\R^{\adimn},\R^{\adimn})$.  Suppose $\Omega_{1},\ldots,\Omega_{m}$ minimize Problem \ref{prob1}.  Then
\begin{equation}\label{nine5}
\begin{aligned}
&\frac{d^{2}}{ds^{2}}|_{s=0}\sum_{1\leq i<j\leq m}\int_{\Sigma_{ij}^{(s)}}\gamma_{\sdimn}(x)dx
=\sum_{1\leq i<j\leq m}\Big(-\int_{\Sigma_{ij}}f_{ij}L_{ij}f_{ij}\gamma_{\sdimn}(x)dx\\
&\qquad\qquad\qquad\qquad\qquad+\lambda_{ij}\frac{d}{ds}|_{s=0}\int_{\Sigma_{ij}^{(s)}}f_{ij}(x)\gamma_{\sdimn}(x)dx
+\frac{d}{ds}|_{s=0}\int_{\redS_{ij}^{(s)}}\langle X,\nu_{ij}\rangle\Big)\gamma_{\sdimn}(x)dx.
\end{aligned}
\end{equation}
\end{lemma}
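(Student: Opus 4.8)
The plan is to differentiate the first variation identity \eqref{nine2} once more along the flow $\Psi$, to simplify using the first-order condition \eqref{nine5.3}, and to recognize the resulting linearization of the Gaussian mean curvature $H_{ij}-\langle x,N_{ij}\rangle$ as the operator $-L_{ij}$. Since both sides of \eqref{nine5} are sums over $1\le i<j\le m$, it suffices to prove the identity for a single interface $\Sigma_{ij}$ and then add. By Lemma \ref{lemma52.6} (Assumption \ref{as1}), each $\Sigma_{ij}$ is, away from a set of vanishing Gaussian measure, a $C^{\infty}$ hypersurface with $C^{\infty}$ boundary $\redS_{ij}$; since $X$ is smooth with compact support and $\Psi(\cdot,s)$ is a diffeomorphism of $\R^{\adimn}$ equal to the identity outside a fixed compact set, all integrals below are finite, and differentiation under the integral sign, Fubini, and the divergence theorem on $\Sigma_{ij}$ are justified exactly as in the proofs of Lemmas \ref{lemma10} and \ref{lemma24}. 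I proceed in three steps.

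\textbf{Step 1 (first variation at time $s$).} Since $\Psi$ is the flow of the autonomous field $X$, one has $\Omega^{(s+t)}=\Psi(\Omega^{(s)},t)$, so \eqref{nine2} applied with $\Omega^{(s)}$ in place of $\Omega$ and $\Sigma_{ij}^{(s)}$ (oriented by $N_{ij}^{(s)}$) in place of $\Sigma$ gives, for $s$ near $0$,
\[
\frac{d}{ds}\int_{\Sigma_{ij}^{(s)}}\gamma_{\sdimn}(x)\,dx
=\int_{\Sigma_{ij}^{(s)}}\bigl(H_{ij}^{(s)}-\langle x,N_{ij}^{(s)}\rangle\bigr)f_{ij}^{(s)}\gamma_{\sdimn}(x)\,dx
+\int_{\redS_{ij}^{(s)}}\langle X,\nu_{ij}^{(s)}\rangle\gamma_{\sdimn}(x)\,dx ,
\]
where $f_{ij}^{(s)}=\langle X,N_{ij}^{(s)}\rangle$ on $\Sigma_{ij}^{(s)}$ (the normal speed of the variation, since $X$ generates $\Psi$) and $H_{ij}^{(s)}=\mathrm{div}(N_{ij}^{(s)})$.

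\textbf{Step 2 (differentiate at $s=0$).} Pull the first integral on the right back to $\Sigma_{ij}$ via $\Psi(\cdot,s)$ and the tangential Jacobian $J_{\tau}\Psi$, as in the proof of Lemma \ref{lemma10}, and apply the product rule. Writing $h_{ij}^{(s)}:=H_{ij}^{(s)}-\langle \cdot,N_{ij}^{(s)}\rangle$ and denoting by $\dot h_{ij},\dot f_{ij}$ the material derivatives $\frac{d}{ds}|_{s=0}h_{ij}^{(s)}(\Psi(x,s))$, $\frac{d}{ds}|_{s=0}f_{ij}^{(s)}(\Psi(x,s))$, the $s$-derivative at $0$ equals
\[
\int_{\Sigma_{ij}}\dot h_{ij}\,f_{ij}\,\gamma_{\sdimn}(x)\,dx
+\int_{\Sigma_{ij}}h_{ij}^{(0)}\Bigl(\dot f_{ij}\,\gamma_{\sdimn}(x)+f_{ij}\,\tfrac{d}{ds}\big|_{s=0}\bigl[\gamma_{\sdimn}(\Psi(x,s))J_{\tau}\Psi(x,s)\bigr]\Bigr)dx .
\]
By \eqref{nine5.3}, $h_{ij}^{(0)}\equiv\lambda_{ij}$ is constant on $\Sigma_{ij}$, so pulling this constant out identifies the second summand with $\lambda_{ij}\frac{d}{ds}|_{s=0}\int_{\Sigma_{ij}^{(s)}}f_{ij}^{(s)}\gamma_{\sdimn}(x)\,dx$, the middle term of \eqref{nine5}; and differentiating the boundary integral $\int_{\redS_{ij}^{(s)}}\langle X,\nu_{ij}^{(s)}\rangle\gamma_{\sdimn}$ gives the last term of \eqref{nine5}. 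It remains to show $\dot h_{ij}=-L_{ij}f_{ij}$.

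\textbf{Step 3 (the linearization), and the main obstacle.} Because $h_{ij}^{(0)}\equiv\lambda_{ij}$ is constant, the material derivative $\dot h_{ij}$ is unchanged if $X$ is replaced by its normal component $f_{ij}N_{ij}$: the tangential part of $X$ only reparametrizes $\Sigma_{ij}$ and contributes $\langle X^{T},\nabla h_{ij}\rangle=\langle X^{T},\nabla\lambda_{ij}\rangle=0$. For the purely normal variation $\frac{d}{ds}x=f_{ij}N_{ij}$ the standard first-variation formulas (with our conventions $H=\mathrm{div}N$, $a_{ij}=\langle\nabla_{e_i}e_j,N\rangle$, cf.\ \cite{colding12a,hutchings02}) read $\frac{d}{ds}|_{0}N_{ij}^{(s)}=-\nabla f_{ij}$ and $\frac{d}{ds}|_{0}H_{ij}^{(s)}=-\Delta f_{ij}-\vnormt{A}^{2}f_{ij}$, so
\[
\dot h_{ij}=\frac{d}{ds}\Big|_{0}H_{ij}^{(s)}-\langle f_{ij}N_{ij},N_{ij}\rangle-\langle x,-\nabla f_{ij}\rangle
=-\Delta f_{ij}+\langle x,\nabla f_{ij}\rangle-\vnormt{A}^{2}f_{ij}-f_{ij}=-L_{ij}f_{ij} ,
\]
by \eqref{one3.5n}. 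Hence $\int_{\Sigma_{ij}}\dot h_{ij}f_{ij}\gamma_{\sdimn}=-\int_{\Sigma_{ij}}f_{ij}L_{ij}f_{ij}\gamma_{\sdimn}$, which is the first term of \eqref{nine5}; combining Steps 1--3 and summing over $1\le i<j\le m$ proves the Lemma. The hard part is getting the signs right in this last computation (a useful check: on a sphere $rS^{\sdimn}$ with outward unit normal speed one has $h_{ij}=n/r-r$, so $\dot h_{ij}=-n/r^{2}-1$, matching $-L_{ij}1=-(n/r^{2}+1)$), together with the care needed to push $\Psi$ through the singular set and to interchange the derivatives; Step 2 otherwise mirrors \cite[Proposition 3.3]{hutchings02} with the Gaussian weight replacing Lebesgue measure.
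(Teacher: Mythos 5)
Your proposal is correct and follows essentially the same route as the paper: differentiate the first variation formula \eqref{nine2} once more, use the Colding--Minicozzi linearizations $N'=-\nabla f$ and $H'=-\Delta f-\vnormt{A}^{2}f$ to identify $(H_{ij}-\langle x,N_{ij}\rangle)'=-L_{ij}f_{ij}$, and invoke \eqref{nine5.3} to convert the middle term into $\lambda_{ij}\frac{d}{ds}|_{s=0}\int_{\Sigma_{ij}^{(s)}}f_{ij}\gamma_{\sdimn}\,dx$. Your Step 3 is in fact slightly more careful than the paper's computation (which has a sign typo in the $\langle x,\nabla f\rangle$ term), since you explicitly note that the tangential advection term $\langle X^{T},\nabla h_{ij}\rangle$ vanishes because $h_{ij}\equiv\lambda_{ij}$ is constant.
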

\begin{proof}
Let $\Sigma$ be an $n$-dimensional $C^{\infty}$ hypersurface with boundary.  We let $'$ denote $\frac{\partial}{\partial s}|_{s=0}$.  From Lemma \ref{lemma10} we have
\begin{flalign*}
&\frac{d^{2}}{ds^{2}}|_{s=0}\int_{\Sigma^{(s)}}\gamma_{\sdimn}(x)dx
=\int_{\Sigma}(H(x)-\langle N(x),x\rangle)'f(x)\gamma_{\sdimn}(x)dx\\
&\qquad+\int_{\Sigma}(H(x)-\langle N(x),x\rangle)[f(x)\gamma_{\sdimn}(x)dx]'
+\frac{d}{ds}|_{s=0}\int_{\redS^{(s)}}\langle X,\nu\rangle\gamma_{\sdimn}(x)dx.
\end{flalign*}
From \eqref{nine2.3}, $x'=X=X^{N}+X^{T}=fN+X^{T}$.  Also, $H'=-\Delta f-\vnormt{A}^{2}f$, $N'=-\nabla f$, \cite[A.3, A.4]{colding12a} (the latter calculations require writing $\Sigma^{(s)}$ in the form $\{x+ sN(x)+O_{x}(s^{2})\colon x\in\Sigma\}$).  So,
$$(H-\langle N,x\rangle)'=-\Delta f-\vnormt{A}^{2}f-\langle N,fN+X^{T}\rangle-\langle x,\nabla f\rangle\stackrel{\eqref{three4.5}}{=}-Lf.$$
In summary,
\begin{flalign*}
&\frac{d^{2}}{ds^{2}}|_{s=0}\int_{\Sigma^{(s)}}\gamma_{\sdimn}(x)dx
=-\int_{\Sigma}fLf\gamma_{\sdimn}(x)dx\\
&\qquad\qquad\qquad+\int_{\Sigma}(H(x)-\langle N(x),x\rangle)[f(x)\gamma_{\sdimn}(x)dx]'
+\frac{d}{ds}|_{s=0}\int_{\redS^{(s)}}\langle X,\nu\rangle\gamma_{\sdimn}(x)dx.
\end{flalign*}
Summing over all $1\leq i<j\leq m$, and applying \eqref{nine5.3},
\begin{flalign*}
&\frac{d^{2}}{ds^{2}}|_{s=0}\sum_{1\leq i<j\leq m}\int_{\Sigma_{ij}^{(s)}}\gamma_{\sdimn}(x)dx
=\sum_{1\leq i<j\leq m}-\int_{\Sigma_{ij}}f_{ij}L_{ij}f_{ij}\gamma_{\sdimn}(x)dx\\
&\qquad\qquad\qquad\qquad+\lambda_{ij}\int_{\Sigma_{ij}}[f_{ij}(x)\gamma_{\sdimn}(x)dx]'
+\frac{d}{ds}|_{s=0}\int_{\redS_{ij}^{(s)}}\langle X,\nu_{ij}\rangle\gamma_{\sdimn}(x)dx.
\end{flalign*}
\end{proof}

Below, we need the following combinatorial Lemma, the case $m=3$ being treated in \cite[Proposition 3.3]{hutchings02}.
\begin{lemma}\label{lemma25.3}
Let $m\geq3$.  Let
$$D_{1}\colonequals \{(x_{ij})_{1\leq i\neq j\leq m}\in\R^{\binom{m}{2}}\colon \forall\,1\leq i\neq j\leq m,\quad x_{ij}=-x_{ji},\,\sum_{j\in\{1,\ldots,m\}\colon j\neq i}x_{ij}=0\}.$$
\begin{flalign*}
D_{2}\colonequals \{(x_{ij})_{1\leq i\neq j\leq m}\in\R^{\binom{m}{2}}
&\colon  \forall\,1\leq i\neq j\leq m,\quad x_{ij}=-x_{ji},\\
&\forall\,1\leq i<j<k\leq m\quad x_{ij}+x_{jk}+x_{ki}=0\}.
\end{flalign*}
Let $x\in D_{1}$ and let $y\in D_{2}$.  Then $\sum_{1\leq i<j\leq m}x_{ij}y_{ij}=0$.
\end{lemma}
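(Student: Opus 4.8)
The plan is to use the observation, recorded in Remark \ref{rk27} (following \cite[Theorem 4.9(ii)]{milman18a}), that the elements of $D_{2}$ are precisely the ``discrete coboundaries'': if $y\in D_{2}$, then there exists $\lambda=(\lambda_{1},\ldots,\lambda_{m})\in\R^{m}$ with $y_{ij}=\lambda_{i}-\lambda_{j}$ for all $1\le i\ne j\le m$. Once this is known, the Lemma reduces to a short computation that uses only the vanishing-row-sum part of the definition of $D_{1}$. So I would first establish the coboundary representation and then carry out that computation.

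To produce $\lambda$, I would fix the index $1$, set $\lambda_{1}\colonequals 0$ and $\lambda_{i}\colonequals y_{i1}$ for $2\le i\le m$. The antisymmetry $y_{ij}=-y_{ji}$ together with the relation $y_{ij}+y_{jk}+y_{ki}=0$ --- which, by antisymmetry, holds for \emph{every} ordered triple of distinct indices, not just for $i<j<k$ --- then gives $y_{ij}=-y_{j1}-y_{1i}=\lambda_{i}-\lambda_{j}$ whenever $i,j\ne 1$, while the cases $j=1$ and $i=1$ are immediate from the definition of $\lambda$ and antisymmetry. (If one wants the extra normalization $\sum_{i}\lambda_{i}=0$ from Remark \ref{rk27}, subtract the average of the $\lambda_{i}$ from each $\lambda_{i}$; this changes no difference $\lambda_{i}-\lambda_{j}$. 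It is not needed here.)

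With $y_{ij}=\lambda_{i}-\lambda_{j}$ in hand, write $\sum_{1\le i<j\le m}x_{ij}y_{ij}=\sum_{1\le i<j\le m}x_{ij}(\lambda_{i}-\lambda_{j})$. Since $x_{ji}(\lambda_{j}-\lambda_{i})=x_{ij}(\lambda_{i}-\lambda_{j})$, the sum over unordered pairs is half the sum over ordered pairs, so it equals $\tfrac12\sum_{i=1}^{m}\lambda_{i}\sum_{j\ne i}x_{ij}-\tfrac12\sum_{j=1}^{m}\lambda_{j}\sum_{i\ne j}x_{ij}$. The first inner sum is $0$ by the defining condition of $D_{1}$; the second inner sum equals $-\sum_{i\ne j}x_{ji}$ by antisymmetry, which is again $0$ by the $D_{1}$ condition (now with row index $j$). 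Hence the whole expression vanishes, as claimed.

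There is no serious obstacle here: the only point needing a little care is verifying that the cocycle identity defining $D_{2}$, although stated only for $i<j<k$, propagates to arbitrary orderings of three distinct indices via $x_{ij}=-x_{ji}$ --- and this is exactly what makes the coboundary representation independent of how the reference index $1$ is inserted into a triple. The remainder is bookkeeping with the antisymmetry relation.
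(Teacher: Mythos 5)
Your proposal is correct, and it is more explicit than the paper's own argument. The paper disposes of this lemma in one line: the row-sum constraints defining $D_{1}$ are orthogonality conditions against certain elements of $D_{2}$ (the elementary coboundaries $y_{ij}=\lambda_i-\lambda_j$ with $\lambda$ a standard basis vector), the triangle constraints defining $D_{2}$ are orthogonality conditions against certain elements of $D_{1}$, and a dimension count ($\dim D_{1}=\binom{m}{2}-(m-1)$, $\dim D_{2}=m-1$) shows these spanning families exhaust the two spaces, so $D_{1}=D_{2}^{\perp}$. You instead prove directly that every element of $D_{2}$ is a coboundary $y_{ij}=\lambda_{i}-\lambda_{j}$ --- which is exactly the fact the paper records separately in Remark \ref{rk27} --- and then kill the pairing by the zero-row-sum condition on $x$. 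Your construction of $\lambda$ (anchor at the index $1$, propagate via the cocycle identity, and note that antisymmetry extends the identity from ordered triples $i<j<k$ to all orderings) is sound, and the final bookkeeping with the half-sum over ordered pairs is correct: the first inner sum vanishes by the $D_{1}$ row condition and the second by that condition combined with antisymmetry. What your route buys is a self-contained computation needing no dimension count; what the paper's route buys is brevity, at the cost of leaving the ``vice versa'' spanning claims implicit.
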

\begin{proof}
$D_{1}$ is defined to be perpendicular to vectors in $D_{2}$, and vice versa.  That is, $D_{1}$ and $D_{2}$ are orthogonal complements of each other, and in terms of vector spaces, $D_{1}\oplus D_{2}=\R^{\binom{m}{2}}$.  Consequently, the inner product of any $x\in D_{1}$ and $y\in D_{2}$ is zero.
\end{proof}

It is well-known that compactly supported variations such that $\frac{d}{ds}|_{s=0}\gamma_{\adimn}(\Omega_{i}^{(s)})=0$ for all $1\leq i\leq m$ can be modified such that $\frac{d^{k}}{ds^{k}}|_{s=0}\gamma_{\adimn}(\Omega_{i}^{(s)})=0$ for all $1\leq i\leq m$ and for all $k\geq1$ while preserving the second variation.  Such an application of the implicit function theorem appears e.g. in \cite[Proposition 3.3]{hutchings02} or \cite[Lemma 2.4]{barbosa84}.  This argument can be extended to noncompact variations \cite[Lemma 1]{barchiesi16}.

The Lemma below can be compared with the corresponding result \cite[Lemma 3.2]{hutchings02} and \cite[Lemmas 4.12 and 5.2]{milman18a}.

\begin{lemma}[\embolden{Extension Lemma for Existence of Volume-Preserving Variations}]\label{lemma27}
For any $1\leq i<j\leq m$, let $f_{ij}\in C_{0}^{\infty}(\Sigma_{ij})$ satisfy
\begin{equation}\label{eight1}
\forall\,1\leq i<j<k\leq m,\,\forall\,x\in\Sigma_{ij}\cap \Sigma_{jk}\cap\Sigma_{ki},\quad f_{ij}(x)+f_{jk}(x)+f_{ki}(x)=0.
\end{equation}
Then there exists a vector field $X\in C_{0}^{\infty}(\R^{\adimn},\R^{\adimn})$ such that
\begin{equation}\label{eight3}
\forall\,1\leq i<j\leq m,\quad\forall\, x\in\Sigma_{ij},\,\langle X(x),N_{ij}(x)\rangle=f_{ij}(x).
\end{equation}
If additionally
\begin{equation}\label{eight2}
\forall\,1\leq i\leq m,\quad \sum_{j\in\{1,\ldots,m\}\colon j\neq i}\int_{\Sigma_{ij}}f_{ij}\gamma_{\sdimn}(x)dx=0,
\end{equation}
then $X$ can also be chosen to be volume preserving:
$$\forall\,1\leq i\leq m,\quad\forall\,s\in(-\epsilon,\epsilon),\quad \gamma_{\adimn}(\Omega_{i}^{(s)})=\gamma_{\adimn}(\Omega_{i}).$$
\end{lemma}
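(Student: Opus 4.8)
The goal is to produce a single compactly supported vector field $X$ on $\R^{\adimn}$ whose normal component along each interface $\Sigma_{ij}$ agrees with the prescribed datum $f_{ij}$, and then, if the volume-balance condition \eqref{eight2} holds, to correct $X$ by a compactly supported perturbation so that the resulting flow preserves all $m$ Gaussian volumes exactly for small $s$. I would organize this in two stages: first the \emph{extension} (realizing $f_{ij}$ as normal components of a global vector field), then the \emph{volume correction}.

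\emph{Stage 1 (extension).} Near any point of the regular part $M_{\sdimn}$ of $\Sigma$, exactly one interface $\Sigma_{ij}$ passes through, and there the construction is the standard one: extend $f_{ij}N_{ij}$ off the hypersurface using a tubular neighborhood and a cutoff. Near a point of $M_{\sdimn-1}$, the surface looks like $Y_1\times\R^{\sdimn-1}$, so three interfaces $\Sigma_{ij},\Sigma_{jk},\Sigma_{ki}$ meet along a codimension-$2$ stratum with the $120^\circ$ condition $\nu_{ij}+\nu_{jk}+\nu_{ki}=0$ from \eqref{nine5.3}. The compatibility hypothesis \eqref{eight1}, $f_{ij}+f_{jk}+f_{ki}=0$ on the triple stratum, is exactly what is needed to find a single vector $v(x)$ at each such point $x$ with $\langle v(x),N_{ij}(x)\rangle=f_{ij}(x)$ for all three pairs: the three normals span a $2$-plane with a single linear relation ($N_{ij}+N_{jk}+N_{ki}$, up to signs/orientation), and the prescribed values are consistent with that relation precisely because of \eqref{eight1}. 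One then chooses such $v$ smoothly along $M_{\sdimn-1}$, extends it to a neighborhood, and patches everything together with a partition of unity subordinate to a cover of a neighborhood of $\Sigma$ by (a) tubular neighborhoods of the $M_{\sdimn}$ pieces and (b) neighborhoods of the $M_{\sdimn-1}$ stratum; the patching preserves \eqref{eight3} because on each overlap all the local fields already have the correct normal components along every interface present there. Finally multiply by a global cutoff to land in $C_0^\infty(\R^{\adimn},\R^{\adimn})$; this is harmless since the $f_{ij}$ are compactly supported. The lower strata $M_{\sdimn-2}\cup M_{\sdimn-3}$ have Hausdorff codimension $\geq 2$ in $\Sigma$ and impose no obstruction — one simply extends continuously/smoothly across them, using that a smooth datum on the complement of a codimension-$\geq2$ skeleton extends.

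\emph{Stage 2 (volume correction).} Given the field $X_0$ from Stage 1, the flow $\Psi_0$ it generates gives $V(s)=(\gamma_{\adimn}(\Omega_1^{(s)}),\dots,\gamma_{\adimn}(\Omega_m^{(s)}))$ with $V(0)=(\gamma_{\adimn}(\Omega_i))_i$ and, by the first variation formula \eqref{nine1} together with \eqref{eight2}, $V'(0)=0$. To kill the higher-order variation I would introduce finitely many auxiliary compactly supported fields $Y_1,\dots,Y_k$ whose associated volume-derivative vectors span the codimension-$1$ subspace $\{w\in\R^m:\sum_i w_i=0\}$ (the constraint $\sum_i\gamma_{\adimn}(\Omega_i)=1$ forces all admissible volume vectors to lie there); such $Y_\ell$ exist because one can localize near interior points of distinct interfaces to move any single $\gamma_{\adimn}(\Omega_i)$ relative to another. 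Crucially, each $Y_\ell$ can be chosen with $\langle Y_\ell,N_{ij}\rangle$ supported away from all the triple strata and away from $\mathrm{supp}(f_{ij})$, and — to not spoil \eqref{eight3} — with \emph{zero} normal component along every interface on which we are constraining $X$; a cleaner route is to take the $Y_\ell$ supported in small balls contained in the open sets $\Omega_i$, so they are genuinely volume-moving but contribute nothing to any normal trace. Then define $X(x) = X_0(x) + \sum_{\ell=1}^{k} u_\ell(s)\, Y_\ell(x)$ — more precisely, consider the $(k{+}1)$-parameter family of flows and apply the implicit function theorem to solve for $u_1,\dots,u_k$ as smooth functions of $s$, with $u_\ell(0)=0$, so that $V(s)\equiv V(0)$; the Jacobian of the volume map with respect to $(u_\ell)$ at $s=0$ is, by \eqref{nine1}, exactly the (surjective onto $\{\sum w_i=0\}$) matrix of volume-derivative vectors of the $Y_\ell$, which is invertible on that subspace. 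This is precisely the classical argument of \cite[Proposition 3.3]{hutchings02}, \cite[Lemma 2.4]{barbosa84}, extended to the noncompact Gaussian setting as in \cite[Lemma 1]{barchiesi16}. Shrinking the parameter interval to some $(-\epsilon,\epsilon)$ gives the claimed volume preservation, while \eqref{eight3} is untouched because the $Y_\ell$ have vanishing normal trace on all $\Sigma_{ij}$.

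\emph{Main obstacle.} The only genuinely delicate point is Stage 1 at the codimension-$2$ triple stratum $M_{\sdimn-1}$: one must check that the linear system $\langle v,N_{ij}\rangle=f_{ij}$, $\langle v,N_{jk}\rangle=f_{jk}$, $\langle v,N_{ki}\rangle=f_{ki}$ is consistent at every point, and that a solution $v$ can be chosen \emph{smoothly} along the stratum and then extended smoothly to a full neighborhood so that the partition-of-unity patching does not destroy the normal-trace identities on overlaps. Consistency is forced by \eqref{eight1} plus the $120^\circ$ stationarity relation among the $N_{ij}$; smoothness follows because the three normals form a smoothly varying frame of a $2$-plane bundle over $M_{\sdimn-1}$ with a constant-rank relation, so one can take $v$ to be, say, the minimal-norm solution, which depends smoothly on the data. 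Everything else (tubular neighborhoods, cutoffs, the implicit function theorem, choosing volume-moving fields) is routine, and the lower strata are negligible for the extension because of their codimension.
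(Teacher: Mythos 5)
Your Stage 1 is essentially the paper's argument: the paper uses the compatibility condition \eqref{eight1} to define the normal components of a field $Z$ on the triple junction set and then invokes Whitney extension, while you work with tubular neighborhoods and a partition of unity; both are fine and the consistency analysis at $M_{\sdimn-1}$ is correct.

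Stage 2, however, contains a genuine error. You require the correcting fields $Y_{\ell}$ to have \emph{zero} normal component along every interface (or, in your ``cleaner route,'' to be supported in small balls inside the open sets $\Omega_{i}$), and you simultaneously claim they are ``genuinely volume-moving.'' These two properties are incompatible. By the first variation formula \eqref{nine1}, $\frac{d}{ds}\big|_{s=0}\gamma_{\adimn}(\Omega_{i}^{(s)})=\sum_{j\neq i}\int_{\Sigma_{ij}}\langle Y_{\ell},N_{ij}\rangle\gamma_{\adimn}(x)\,dx$, so a field with vanishing normal trace on all interfaces has zero first variation of every volume; worse, a field supported in a ball $B\subset\Omega_{i}$ generates a flow that is the identity off $B$ and maps $B$ to itself, so it preserves every $\Omega_{j}$ \emph{exactly} and moves no volume at any order. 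Consequently the Jacobian of your volume map with respect to $(u_{1},\ldots,u_{k})$ at $s=0$ is the zero matrix, not a surjection onto $\{w\in\R^{m}\colon\sum_{i}w_{i}=0\}$, and the implicit function theorem cannot be applied. The resolution is the one in the paper: take the correctors to be $\widetilde{g}_{ij}\widetilde{N}_{ij}$ with $g_{ij}\geq0$ supported in the interiors of $m-1$ chosen interfaces $\Sigma_{ij}$ (disjoint from all other interfaces), so their volume-derivative vectors are nonzero multiples of $u_{i}-u_{j}$ and do span the codimension-one subspace. These correctors necessarily have nonzero normal components on those interfaces, so one does not protect \eqref{eight3} by support considerations; instead, one observes that hypothesis \eqref{eight2} says the unperturbed field already preserves volumes to first order, whence the chain rule forces $t_{ij}'(0)=0$ for the implicitly defined coefficients. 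The velocity field of the combined variation at $s=0$ is therefore just $Z$, and \eqref{eight3} survives for that reason — not because the correctors have zero normal trace. Your write-up omits the $t_{ij}'(0)=0$ step entirely (you only record $u_{\ell}(0)=0$), and that step is precisely where \eqref{eight2} is used.
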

\begin{proof}
By assumption, $\exists$ a vector field $Z\colon\R^{\adimn}\to\R^{\adimn}$ such that
$$\langle Z(x),N_{ij}(x)\rangle=f_{ij}(x),\qquad\forall\, x\in C,\quad\forall\,1\leq i<j\leq m.$$
Then $Z$ can be extended to all of $\cup_{1\leq i<j\leq m}\Sigma_{ij}$ by e.g. Whitney Extension.  Let $I$ be a subset of $\{(i,j)\colon 1\leq i<j\leq m\}$ of size $m-1$.  For all $(i,j)\in I$, let $g_{ij}\colon\Sigma_{ij}\to\R$ be compactly supported, nonnegative, $C^{\infty}$ functions and let $\widetilde{g}_{ij}$ be any smooth extension of $g_{ij}$ to $\R^{\adimn}$ that is supported in a neighborhood of the interior $\Sigma_{ij}$, disjoint from all $\Sigma_{i'j'}$ with $(i',j')\neq(i,j)$.  Similarly, let $\widetilde{N}_{ij}$ be any smooth extension of $N_{ij}$ to $\R^{\adimn}$.  Consider the map $\widetilde{\Psi}\colon\R^{\adimn}\times(-1,1)\times(-1,1)^{m-1}\to\R^{\adimn}$ defined by
$$\widetilde{\Psi}(x,s,\{t_{ij}\}_{(i,j)\in I})\colonequals x+sZ+\sum_{(i,j)\in I}t_{ij}\widetilde{g}_{ij}\widetilde{N}_{ij},
\qquad\forall\,s\in(-1,1),\,\{t_{ij}\}_{(i,j)\in I}\in(-1,1)^{m-1}.$$
And consider the vector-valued function
$$V(s,\{t_{ij}\}_{(i,j)\in I})\colonequals\Big(\gamma_{\adimn}(\Omega_{1}^{(s,\{t_{ij}\}_{(i,j)\in I})}),\ldots,\gamma_{\adimn}(\Omega_{m}^{(s,\{t_{ij}\}_{(i,j)\in I})})\Big).$$
Then $V\colon\R^{m}\to\R^{m}$, and the image of $V$ is at most $(m-1)$-dimensional, since the sum of the entries of $V$ is equal to $1$.  Consider the equation $V=\mathrm{constant}$.  Then the Jacobian of $V$ has maximal rank.  So, by the Implicit Function Theorem, for every $(i,j)\in I$, there exists a function $t_{ij}\colon(-1,1)\to\R$ such that $V(s,\{t_{ij}(s)\}_{(i,j)\in I})=\mathrm{constant}$ for all $s\in(-1,1)$.  Since the Jacobian of $V$ has maximal rank and \eqref{eight2} holds, it follows from the chain rule that $t_{ij}'(0)=0$ for all $(i,j)\in I$.  So, if we let $X$ be the vector field for $\Psi(x,s)\colonequals\widetilde{\Psi}(x,s,\{t_{ij}(s)\}_{(i,j)\in I})$ satisfying \eqref{nine2.3}.  Then \eqref{eight3} holds for $X$.
\end{proof}

\begin{lemma}[\embolden{Volume-Preserving Second Variation of Gaussian Surface Area for Minimizers}]\label{lemma28}
Let $\Omega_{1},\ldots,\Omega_{m}$ minimize Problem \ref{prob1}.  $\forall$ $1\leq i<j\leq m$, let $f_{ij}\in C_{0}^{\infty}(\Sigma_{ij})$ satisfy \eqref{eight1} and \eqref{eight2}.  Let $X$ be the vector field guaranteed to exist from Lemma \ref{lemma27}.  Then
\begin{flalign*}
&\frac{d^{2}}{ds^{2}}|_{s=0}\sum_{1\leq i<j\leq m}\int_{\Sigma_{ij}^{(s)}}\gamma_{\sdimn}(x)dx\\
&\qquad\qquad\qquad=\sum_{1\leq i<j\leq m}-\int_{\Sigma_{ij}}f_{ij}L_{ij}f_{ij}\gamma_{\sdimn}(x)dx
+\frac{d}{ds}|_{s=0}\int_{\redS_{ij}^{(s)}}\langle X,\nu_{ij}\rangle\gamma_{\sdimn}(x)dx.
\end{flalign*}
\end{lemma}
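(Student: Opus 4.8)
The plan is to obtain Lemma~\ref{lemma28} as an immediate corollary of Lemma~\ref{lemma21}: the two formulas differ only by the middle summand $\sum_{1\le i<j\le m}\lambda_{ij}\,\frac{d}{ds}|_{s=0}\int_{\Sigma_{ij}^{(s)}}f_{ij}(x)\gamma_{\sdimn}(x)\,dx$ in \eqref{nine5}, so the entire content of the lemma is that this term vanishes once $X$ is volume preserving. Thus, after quoting \eqref{nine5}, everything reduces to showing that this sum is zero.

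To do this I would first record the identity, valid for every $s$ near $0$,
\[
\sum_{1\le i<j\le m}\lambda_{ij}\int_{\Sigma_{ij}^{(s)}}\langle X,N_{ij}^{(s)}\rangle\gamma_{\sdimn}(x)\,dx=\sqrt{2\pi}\,\langle\lambda,V'(s)\rangle,
\]
where $N_{ij}^{(s)}$ is the unit normal of $\Sigma_{ij}^{(s)}$ pointing from $\Omega_i^{(s)}$ into $\Omega_j^{(s)}$, the vector $\lambda=(\lambda_1,\dots,\lambda_m)$ is as in Remark~\ref{rk27} (so $\lambda_{ij}=\lambda_i-\lambda_j$ and $\sum_i\lambda_i=0$, using Lemma~\ref{lemma24} and the middle line of \eqref{nine5.3}), and $V(s)=(\gamma_{\adimn}(\Omega_1^{(s)}),\dots,\gamma_{\adimn}(\Omega_m^{(s)}))$. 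This is exactly the computation of Remark~\ref{rk27}, carried out at time $s$ rather than at $s=0$: substitute $\lambda_{ij}=\lambda_i-\lambda_j$, use $N_{ji}^{(s)}=-N_{ij}^{(s)}$ to turn the sum over unordered pairs into a sum over ordered pairs, group by the first index, and apply the first variation of Gaussian volume \eqref{nine1} to $\Omega_i^{(s)}$ — legitimate at all $s$ since $\{\Omega^{(s)}\}_s$ is the flow of the fixed field $X$, so that $\sum_{j\ne i}\int_{\Sigma_{ij}^{(s)}}\langle X,N_{ij}^{(s)}\rangle\gamma_{\sdimn}\,dx=\sqrt{2\pi}\,\frac{d}{ds}\gamma_{\adimn}(\Omega_i^{(s)})$ — and then recombine. (Equivalently, one may note that the tuple $\big(\int_{\Sigma_{ij}^{(s)}}\langle X,N_{ij}^{(s)}\rangle\gamma_{\sdimn}\big)_{i\ne j}$ lies in the space $D_1$ of Lemma~\ref{lemma25.3} while $(\lambda_{ij})_{i\ne j}$ lies in $D_2$, so their pairing vanishes.)

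Next I would invoke the volume-preserving conclusion of Lemma~\ref{lemma27}: since $\gamma_{\adimn}(\Omega_i^{(s)})=\gamma_{\adimn}(\Omega_i)$ for all $i$ and all small $s$, the curve $V$ is constant, so $V'\equiv 0$ and in particular $V''(0)=0$. Differentiating the displayed identity in $s$ at $s=0$ then yields
\[
\sum_{1\le i<j\le m}\lambda_{ij}\,\frac{d}{ds}|_{s=0}\int_{\Sigma_{ij}^{(s)}}\langle X,N_{ij}^{(s)}\rangle\gamma_{\sdimn}(x)\,dx=\sqrt{2\pi}\,\langle\lambda,V''(0)\rangle=0,
\]
and, since in the proof of Lemma~\ref{lemma21} the symbol $f_{ij}$ on $\Sigma_{ij}^{(s)}$ denotes precisely $\langle X,N_{ij}^{(s)}\rangle$, the left-hand side is exactly the middle summand in \eqref{nine5}. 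Hence \eqref{nine5} collapses to the asserted formula.

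The step I expect to need the most care is the bookkeeping in the time variable rather than any single estimate: one must check that \eqref{nine1} is being applied at a general time $s$ (permissible by the flow property of $\Psi$ and requiring no minimality), and that ``$f_{ij}$ on $\Sigma_{ij}^{(s)}$'' in \eqref{nine5} is the normal derivative along the \emph{moved} surface, $\langle X,N_{ij}^{(s)}\rangle$, not a transported copy of the time-$0$ function. One also relies on the content of Remark~\ref{rk27} to pass from the triple-point relations in \eqref{nine5.3} to the global decomposition $\lambda_{ij}=\lambda_i-\lambda_j$; without such a decomposition the sum over pairs cannot be folded into a sum over single indices and the cancellation fails. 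Beyond these points the argument is a direct substitution into \eqref{nine5}.
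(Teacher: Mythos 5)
Your proposal is correct and follows essentially the same route as the paper: the paper also quotes Lemma \ref{lemma21}, uses volume preservation to conclude $\sum_{j\neq i}\frac{d}{ds}|_{s=0}\int_{\Sigma_{ij}^{(s)}}f_{ij}\gamma_{\sdimn}(x)dx=0$ for each $i$, and then kills the middle term of \eqref{nine5} by pairing the tuple $(\lambda_{ij})\in D_{2}$ against this derivative tuple in $D_{1}$ via Lemma \ref{lemma25.3}. Your explicit unfolding through $\lambda_{ij}=\lambda_{i}-\lambda_{j}$ and $V'\equiv 0$ is just that duality written out by hand, so the two arguments coincide in substance.
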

\begin{proof}
Assumption \ref{as1} holds by Lemma \ref{lemma52.6}.  From Lemma \ref{lemma24}, \eqref{nine5.3} holds.  Since the volumes are preserved, for any $1\leq i\leq m$, we have $\sum_{j\neq i}\frac{d}{ds}|_{s=0}\int_{\Sigma_{ij}^{(s)}}f_{ij}(x)\gamma_{\sdimn}(x)dx=0$.  Combining Lemmas \ref{lemma21} and \ref{lemma25.3} shows that the middle term from Lemma \ref{lemma21} vanishes.
\end{proof}

\begin{remark}\label{rk5}
$\forall$ $1\leq i<j<k\leq m$, and $\forall$ $x\in(\redS_{ij})\cap (\redS_{jk})\cap (\redS_{ki})$, define
$$q_{ij}(x)\colonequals[\langle \nabla_{\nu_{kj}}\nu_{kj},N_{kj}\rangle+\langle \nabla_{\nu_{ki}}\nu_{ki},N_{ki}\rangle]/\sqrt{3}.$$
Note that $q_{ij}+q_{jk}+q_{ki}=0$ since $N_{ij}=-N_{ji}$ by Definition \ref{defnote} and $q_{ij}=q_{ji}$.

Compared to \cite{hutchings02}, note that we have the opposite sign convention for the second fundamental form and for $\nu_{ij}$.
\end{remark}
\begin{lemma}[{\cite[Lemma 3.6]{hutchings02}}]\label{lemma29.9}
 $\forall$ $1\leq i<j\leq m$, let $f_{ij}\in C_{0}^{\infty}(\Sigma_{ij})$ satisfy \eqref{eight1}.  Let $X$ be the vector field guaranteed to exist from Lemma \ref{lemma27}.  Then
\begin{flalign*}
&\frac{d}{ds}|_{s=0}\sum_{1\leq i<j\leq m}\int_{\Sigma_{ij}^{(s)}}\langle X,\nu_{ij}\rangle\gamma_{\sdimn}(x)dx\\
&=\sum_{1\leq i<j<k\leq m}\int_{\redS_{ij}\cap\redS_{jk}\cap \redS_{ki}}\langle X,\nabla_{X}(\nu_{ij}+\nu_{jk}+\nu_{ki})\rangle\gamma_{\sdimn}(x)dx\\
 &=\sum_{1\leq i<j<k\leq m}\int_{\redS_{ij}\cap\redS_{jk}\cap \redS_{ki}}
\Big([\nabla_{\nu_{ij}}f_{ij}+q_{ij}f_{ij}]f_{ij}+[\nabla_{\nu_{jk}}f_{jk}+q_{jk}f_{jk}]f_{jk}\\
 &\qquad\qquad\qquad\qquad\qquad\qquad+[\nabla_{\nu_{ki}}f_{ki}+q_{ki}f_{ki}]f_{ki}\Big)\gamma_{\sdimn}(x)dx.
 \end{flalign*}
\end{lemma}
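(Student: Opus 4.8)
The plan is to compute the first variation of the boundary term $\sum_{1\le i<j\le m}\int_{\partial^{*}\Sigma_{ij}^{(s)}}\langle X,\nu_{ij}\rangle\gamma_{\sdimn}\,dx$ by localizing near the triple-junction set and expressing everything in terms of the boundary data $f_{ij}=\langle X,N_{ij}\rangle$. Because the supports of the relevant vector fields are disjoint from $M_{\sdimn-3}$ and $M_{\sdimn}$ away from the triple junctions, we may work near a fixed stratum $\partial^{*}\Sigma_{ij}\cap\partial^{*}\Sigma_{jk}\cap\partial^{*}\Sigma_{ki}$, which by Assumption \ref{as1}(iii) looks locally like $T_{2}\times\R^{\sdimn-2}$. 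The starting point is the second equation of \eqref{nine5.3}, $\nu_{ij}+\nu_{jk}+\nu_{ki}=0$ on this set; differentiating the identity $0=\int_{\Sigma_{ij}^{(s)}\cap\cdots}\langle X,\nu_{ij}^{(s)}+\nu_{jk}^{(s)}+\nu_{ki}^{(s)}\rangle\gamma_{\sdimn}$ in $s$ — which vanishes for every $s$ — lets me trade the derivative of the measure and of $X\circ\Psi$ on the three pieces for the derivative of the $\nu$'s, giving the middle line: $\frac{d}{ds}|_{0}\sum\int\langle X,\nu_{ij}\rangle\gamma_{\sdimn}=\sum\int\langle X,\nabla_{X}(\nu_{ij}+\nu_{jk}+\nu_{ki})\rangle\gamma_{\sdimn}$. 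This is the standard bookkeeping trick used in \cite[Lemma 3.6]{hutchings02}, adapted to our sign conventions and to the product structure $T_{2}\times\R^{\sdimn-2}$, where the extra $\R^{\sdimn-2}$ directions contribute nothing because $\nu_{ij}$ is constant along them.

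Next I would evaluate $\langle X,\nabla_{X}\nu_{ij}\rangle$ at a point $x$ of the triple junction. Decompose $X=f_{ij}N_{ij}+X^{T}$ where $X^{T}$ is tangent to $\Sigma_{ij}$; since at the junction $\{N_{ij},\nu_{ij}\}$ spans the normal plane to the stratum inside $\Sigma_{ij}$, write $X^{T}=g_{ij}\nu_{ij}+(\text{tangent to the stratum})$. The key constraint \eqref{eight1}, $f_{ij}+f_{jk}+f_{ki}=0$, together with the $120^{\circ}$-angle relation $\nu_{ij}+\nu_{jk}+\nu_{ki}=0$ and $N_{ij}+N_{jk}+N_{ki}$ (with the orientation convention in Remark \ref{rk5}), forces the tangential coefficients $g_{ij}$ to be determined by the $f$'s: one finds $g_{ij}$ equal to a fixed linear combination of $f_{jk},f_{ki}$, so that $\langle X,\nu_{ij}\rangle$ can be rewritten purely in terms of $f_{ij},f_{jk},f_{ki}$ evaluated on the stratum. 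Then $\nabla_{X}\nu_{ij}$ splits into a normal part $\langle\nabla_{\nu_{ij}}\nu_{ij},N_{ij}\rangle N_{ij}$-type term and a part coming from differentiating along $N_{ij}$; pairing with $X$ and using the definition of $q_{ij}$ in Remark \ref{rk5} (which packages exactly the relevant components $\langle\nabla_{\nu_{kj}}\nu_{kj},N_{kj}\rangle+\langle\nabla_{\nu_{ki}}\nu_{ki},N_{ki}\rangle$ over $\sqrt3$), plus the identity $\nabla_{\nu_{ij}}\langle X,N_{ij}\rangle=\langle\nabla_{\nu_{ij}}X,N_{ij}\rangle+\langle X,\nabla_{\nu_{ij}}N_{ij}\rangle$ to produce the $\nabla_{\nu_{ij}}f_{ij}$ term, yields the summand $[\nabla_{\nu_{ij}}f_{ij}+q_{ij}f_{ij}]f_{ij}$, and likewise for the $jk$ and $ki$ terms. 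Summing over $1\le i<j<k\le m$ gives the claimed formula.

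The main obstacle I anticipate is the careful tracking of signs and of which normal vector "belongs" to which sheet when passing between the three pairs meeting at a junction — the paper explicitly flags in Remark \ref{rk5} that its conventions for the second fundamental form and for $\nu_{ij}$ are opposite to those of \cite{hutchings02}, so every term inherited from that proof must be re-derived rather than quoted. A secondary point is justifying that the boundary integral localizes cleanly: one must check, using Assumption \ref{as1}, that the variation $\Psi$ does not interact with the lower stratum $M_{\sdimn-3}$ and that on the codimension-two stratum the $120^{\circ}$ structure is rigid enough that $\nu_{ij}+\nu_{jk}+\nu_{ki}=0$ persists to first order in the precise form needed to extract $q_{ij}$; this is where the regularity input of Lemma \ref{lemma52.6} is essential. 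Once the local computation at a single junction point is pinned down with correct signs, the rest is summation and bookkeeping.
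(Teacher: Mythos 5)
The paper offers no proof of this lemma at all: it is imported verbatim from \cite[Lemma 3.6]{hutchings02} (modulo the sign conventions of Remark \ref{rk5}), so your plan to re-derive that computation at the triple junctions, with the ingredients you list in your second paragraph (the frame decomposition of $X$ in the plane orthogonal to the junction, the $120^{\circ}$ relations, the product rule producing $\nabla_{\nu_{ij}}f_{ij}$, and the packaging of the second-fundamental-form terms into $q_{ij}$), is the intended and essentially the only route. However, your justification of the middle equality contains a genuine error. You assert that $\int\langle X,\nu_{ij}^{(s)}+\nu_{jk}^{(s)}+\nu_{ki}^{(s)}\rangle\gamma_{\sdimn}$ ``vanishes for every $s$.'' It does not: the relation $\nu_{ij}+\nu_{jk}+\nu_{ki}=0$ is the last equation of \eqref{nine5.3}, a consequence of stationarity of $\Omega_{1},\ldots,\Omega_{m}$, and the deformed configuration $\Psi(\cdot,s)$ is not stationary for $s\neq0$, so the three deformed sheets need not meet at $120^{\circ}$. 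Worse, if your claim were true, the left-hand side of the lemma would be identically zero, the conclusion would be vacuous, and Lemma \ref{lemma36.5} would hold with no hypotheses --- which is precisely not what the lemma asserts; the whole point is that the first-order failure of the $120^{\circ}$ condition contributes to the second variation. The correct argument uses only the value at $s=0$: pull the integral over the deformed junction back to the fixed junction, expand by the product rule, and note that every term in which $\nu^{(s)}$ is \emph{not} differentiated carries the factor $\langle\,\cdot\,,\nu_{ij}+\nu_{jk}+\nu_{ki}\rangle|_{s=0}=0$, so only $\langle X,\tfrac{d}{ds}|_{s=0}(\nu_{ij}^{(s)}+\nu_{jk}^{(s)}+\nu_{ki}^{(s)})\rangle$ survives; that is the middle line.

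Two smaller points. First, the set $\redS_{ij}\cap\redS_{jk}\cap\redS_{ki}$ over which these integrals are taken is the $(\sdimn-1)$-dimensional stratum modeled on $Y_{1}\times\R^{\sdimn-1}$ in Assumption \ref{as1}(ii), not the $(\sdimn-2)$-dimensional tetrahedral stratum $T_{2}\times\R^{\sdimn-2}$ of \ref{as1}(iii); the $120^{\circ}$ relations $\nu_{ij}+\nu_{jk}+\nu_{ki}=0$ and $N_{ij}+N_{jk}+N_{ki}=0$ that your computation relies on are exactly the content of the $Y_{1}$ model. Second, for $f_{ij}=\langle X,N_{ij}\rangle$ arising from a single vector field $X$, the constraint \eqref{eight1} is automatic (since $N_{ij}+N_{jk}+N_{ki}=0$ at the junction); what determines the conormal components $g_{ij}$ in terms of the $f$'s is simply that one fixed vector is being expressed in three frames rotated by $120^{\circ}$, not the constraint itself. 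With the first-paragraph fix, and granting that your second paragraph is an outline of the HMRR computation rather than the computation itself (which is where the sign re-derivation flagged in Remark \ref{rk5} actually has to be done), the approach is sound.
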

For a proof of Lemma \ref{lemma29.9}, see \cite[Lemma 3.6]{hutchings02}.  Applying the above Lemma, we get

\begin{lemma}\label{lemma36.5}
If for all $1\leq i<j<k\leq m$ we have $f_{ij}\in C_{0}^{\infty}(\Sigma_{ij})$ satisfying \eqref{eight1} and
$$\nabla_{\nu_{ij}}f_{ij}+q_{ij}f_{ij}=\nabla_{\nu_{jk}}f_{jk}+q_{jk}f_{jk}=\nabla_{\nu_{ij}}f_{ij}+q_{ij}f_{ij},\qquad\forall\, 1\leq i<j<k\leq m,$$
then
$$\frac{d}{ds}|_{s=0}\sum_{1\leq i<j\leq m}\int_{\Sigma_{ij}^{(s)}}\langle X,\nu_{ij}\rangle\gamma_{\sdimn}(x)dx=0.$$
\end{lemma}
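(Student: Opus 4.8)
The plan is to read the conclusion directly off Lemma \ref{lemma29.9}. That lemma expresses
\[
\frac{d}{ds}|_{s=0}\sum_{1\leq i<j\leq m}\int_{\Sigma_{ij}^{(s)}}\langle X,\nu_{ij}\rangle\gamma_{\sdimn}(x)dx
\]
as a sum, over triples $1\leq i<j<k\leq m$, of the integral against $\gamma_{\sdimn}$ over the triple junction $\redS_{ij}\cap\redS_{jk}\cap\redS_{ki}$ of the quantity $[\nabla_{\nu_{ij}}f_{ij}+q_{ij}f_{ij}]f_{ij}+[\nabla_{\nu_{jk}}f_{jk}+q_{jk}f_{jk}]f_{jk}+[\nabla_{\nu_{ki}}f_{ki}+q_{ki}f_{ki}]f_{ki}$. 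Thus it suffices to show that this integrand vanishes identically on every triple junction.

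First I would fix $1\leq i<j<k\leq m$ and a point $x\in\redS_{ij}\cap\redS_{jk}\cap\redS_{ki}$. By the hypothesis of the Lemma, the three ``flux'' quantities $\nabla_{\nu_{ij}}f_{ij}+q_{ij}f_{ij}$, $\nabla_{\nu_{jk}}f_{jk}+q_{jk}f_{jk}$ and $\nabla_{\nu_{ki}}f_{ki}+q_{ki}f_{ki}$ coincide at $x$; call their common value $w(x)$. Then the integrand above collapses to $w(x)\big(f_{ij}(x)+f_{jk}(x)+f_{ki}(x)\big)$. Invoking the triple-junction condition \eqref{eight1}, which asserts precisely that $f_{ij}+f_{jk}+f_{ki}=0$ on $\Sigma_{ij}\cap\Sigma_{jk}\cap\Sigma_{ki}$ and hence on $\redS_{ij}\cap\redS_{jk}\cap\redS_{ki}$, I conclude that the integrand is zero at $x$. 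Since $x$ was arbitrary, each summand in Lemma \ref{lemma29.9} vanishes, which gives the claimed identity.

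I do not expect a genuine obstacle here: the statement is a bookkeeping corollary of Lemma \ref{lemma29.9}, the one observation being that equal ``flux'' coefficients can be pulled out of the triple-junction sum, leaving the combination $f_{ij}+f_{jk}+f_{ki}$ that \eqref{eight1} kills. The only point requiring a little care is index consistency, since \eqref{eight1} is phrased for ordered triples $i<j<k$ whereas on the junction one writes $f_{ij},f_{jk},f_{ki}$ with the indices cycled. Using $f_{ji}=-f_{ij}$, $\nu_{ji}=\nu_{ij}$ and $q_{ji}=q_{ij}$ (the last recorded in Remark \ref{rk5}), one checks that each product $[\nabla_{\nu_{ij}}f_{ij}+q_{ij}f_{ij}]f_{ij}$, and likewise the hypothesis, is invariant under any permutation of $\{i,j,k\}$, so \eqref{eight1} applies verbatim and the argument above is well posed.
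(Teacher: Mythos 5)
Your argument is exactly the one the paper intends: Lemma \ref{lemma36.5} is stated immediately after Lemma \ref{lemma29.9} with the phrase ``Applying the above Lemma, we get,'' and your factoring of the common flux value out of each triple-junction integrand, followed by an appeal to \eqref{eight1}, is that application spelled out. The proposal is correct and matches the paper's approach.
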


\section{Second Variation as a Quadratic Form}\label{seceigs}

\begin{definition}[\embolden{Admissible Functions}]\label{def1}
Define $\mathcal{F}$ be the set of functions $(f_{ij})_{1\leq i<j\leq m}$ such that
\begin{itemize}
\item  $\forall$ $1\leq i<j\leq m$, $f_{ij}\colon\Sigma_{ij}\to\R$, $\int_{\Sigma_{ij}}f_{ij}^{2}\gamma_{\sdimn}(x)dx<\infty$ and $\int_{\Sigma_{ij}}\vnormt{\nabla f_{ij}}^{2}\gamma_{\sdimn}(x)dx<\infty$.
\item$\forall\,1\leq i<j<k\leq m,\,\forall\,x\in\Sigma_{ij}\cap \Sigma_{jk}\cap\Sigma_{ki},\, f_{ij}(x)+f_{jk}(x)+f_{ki}(x)=0$.
%\item$\nabla_{\nu_{ij}}f_{ij}+q_{ij}f_{ij}=\nabla_{\nu_{jk}}f_{jk}+q_{jk}f_{jk}=\nabla_{\nu_{ij}}f_{ij}+q_{ij}f_{ij}$.
%\item $\exists 1\leq i'<j'\leq m\colon f_{i'j'}\neq0$.
\end{itemize}
The second condition is well-defined by e.g. a (local) Sobolev Trace inequality \cite{feo13}.
\end{definition}

\begin{definition}[\embolden{Quadratic Form Associated to Second Variations}]\label{def2}
For any $F=(f_{ij})_{1\leq i<j\leq m},G=(g_{ij})_{1\leq i<j\leq m}\in\mathcal{F}$, define the following quantities if they exist:
\begin{equation}\label{quaddef}
\begin{aligned}
&Q(F,G)\colonequals
\sum_{1\leq i<j\leq m}-\int_{\Sigma_{ij}}g_{ij}L_{ij}f_{ij}\gamma_{\sdimn}(x)dx+\sum_{1\leq i<j<k\leq m}\int_{\redS_{ij}\cap\redS_{jk}\cap \redS_{ki}}\\
&\qquad\Big([\nabla_{\nu_{ij}}f_{ij}+q_{ij}f_{ij}]g_{ij}+[\nabla_{\nu_{jk}}f_{jk}+q_{jk}f_{jk}]g_{jk}
+[\nabla_{\nu_{ki}}f_{ki}+q_{ki}f_{ki}]g_{ki}\Big)\gamma_{\sdimn}(x).
 \end{aligned}
 \end{equation}
 \begin{equation}\label{ipdef}
 \langle F,G\rangle\colonequals\sum_{1\leq i<j\leq m}\int_{\Sigma_{ij}}f_{ij}g_{ij}\gamma_{\sdimn}(x)dx.
 \end{equation}
 \end{definition}

Using \eqref{one3.5n}, define $\L\colon \cup_{1\leq i<j\leq m}C_{0}^{\infty}(\Sigma_{ij})\to \cup_{1\leq i<j\leq m}C_{0}^{\infty}(\Sigma_{ij})$ by
\begin{equation}\label{seven00}
\L((f_{ij})_{1\leq i<j\leq m})\colonequals (L_{ij}f_{ij})_{1\leq i<j\leq m}.
\end{equation}
Using also \eqref{one3.5o}
\begin{equation}\label{seven00p}
\mL((f_{ij})_{1\leq i<j\leq m})\colonequals (\mathcal{L}_{ij}f_{ij})_{1\leq i<j\leq m}.
\end{equation}

\begin{lemma}[\embolden{Integration by Parts}]\label{lemma32.5}
Let $F,G\in\mathcal{F}\cap C_{0}^{\infty}(\cup_{1\leq i<j\leq m}\Sigma_{ij})$.  Then
\begin{flalign*}
&Q(F,G)\colonequals
\sum_{1\leq i<j\leq m}\int_{\Sigma_{ij}}[\langle\nabla f_{ij},\nabla g_{ij}\rangle -f_{ij}g_{ij}(\vnormt{A}^{2}+1)]\gamma_{\sdimn}(x)dx\\
&\qquad+\sum_{1\leq i<j<k\leq m}\int_{\redS_{ij}\cap\redS_{jk}\cap \redS_{ki}}
 [q_{ij}f_{ij}g_{ij}+q_{jk}f_{jk}g_{jk}+q_{ki}f_{ki}g_{ki}]\gamma_{\sdimn}(x)dx.
 \end{flalign*}
 In particular, $Q(F,G)=Q(G,F)$, so that $Q$ is symmetric.
\end{lemma}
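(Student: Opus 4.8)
The plan is to integrate by parts, term by term, in the first sum defining $Q(F,G)$ in \eqref{quaddef}: each $-\int_{\Sigma_{ij}}g_{ij}L_{ij}f_{ij}\gamma_{\sdimn}(x)dx$ should become the Dirichlet-type bulk integral $\int_{\Sigma_{ij}}[\langle\nabla f_{ij},\nabla g_{ij}\rangle-f_{ij}g_{ij}(\vnormt{A}^{2}+1)]\gamma_{\sdimn}(x)dx$ together with a boundary integral over $\redS_{ij}$, while the $q_{ij}$-boundary terms already present in \eqref{quaddef} are carried along unchanged. In essence I only need the self-adjointness of the drift Laplacian $\mathcal{L}_{ij}$ of \eqref{one3.5o} with respect to the Gaussian weight on $\Sigma_{ij}$, keeping track of the extra boundary term forced by the triple-junction structure of the cluster.

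Fix $1\le i<j\le m$. First I would record the weighted-divergence identity on $\Sigma_{ij}$: the tangential gradient of $\gamma_{\sdimn}$ on $\Sigma_{ij}$ is $\nabla\gamma_{\sdimn}=-\gamma_{\sdimn}\,x^{T}$, and since $\nabla f_{ij}$ is tangential one has $\langle x^{T},\nabla f_{ij}\rangle=\langle x,\nabla f_{ij}\rangle$, so by the product rule $\mathrm{div}_{\tau}(\gamma_{\sdimn}\nabla f_{ij})=\gamma_{\sdimn}(\Delta f_{ij}-\langle x,\nabla f_{ij}\rangle)=\gamma_{\sdimn}\,\mathcal{L}_{ij}f_{ij}$. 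Applying the tangential divergence theorem of Remark \ref{rk10} to the field $g_{ij}\gamma_{\sdimn}\nabla f_{ij}$ — which is tangent to $\Sigma_{ij}$, hence contributes no mean-curvature term — and expanding $\mathrm{div}_{\tau}(g_{ij}\gamma_{\sdimn}\nabla f_{ij})$ once more by the product rule, gives $\int_{\Sigma_{ij}}g_{ij}\mathcal{L}_{ij}f_{ij}\gamma_{\sdimn}(x)dx=\int_{\redS_{ij}}g_{ij}\nabla_{\nu_{ij}}f_{ij}\gamma_{\sdimn}(x)dx-\int_{\Sigma_{ij}}\langle\nabla f_{ij},\nabla g_{ij}\rangle\gamma_{\sdimn}(x)dx$. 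Since $L_{ij}=\mathcal{L}_{ij}+\vnormt{A}^{2}+1$ by \eqref{one3.5n}, \eqref{one3.5o}, adding $-\int_{\Sigma_{ij}}g_{ij}f_{ij}(\vnormt{A}^{2}+1)\gamma_{\sdimn}(x)dx$ yields
\begin{align*}
-\int_{\Sigma_{ij}}g_{ij}L_{ij}f_{ij}\gamma_{\sdimn}(x)dx
&=\int_{\Sigma_{ij}}\big[\langle\nabla f_{ij},\nabla g_{ij}\rangle-f_{ij}g_{ij}(\vnormt{A}^{2}+1)\big]\gamma_{\sdimn}(x)dx\\
&\quad-\int_{\redS_{ij}}g_{ij}\nabla_{\nu_{ij}}f_{ij}\gamma_{\sdimn}(x)dx.
\end{align*}

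Next I would sum over $1\le i<j\le m$. The bulk integrals assemble into the first sum in the asserted identity, so it remains to handle the boundary integrals. Using Assumption \ref{as1}(ii), $\redS_{ij}$ decomposes — away from the $\gamma_{\sdimn}$-negligible lower strata $M_{\sdimn-2}\cup M_{\sdimn-3}$, near which $f_{ij},g_{ij}$ vanish — into the triple junctions $\redS_{ij}\cap\redS_{jk}\cap\redS_{ki}$ over $k\notin\{i,j\}$, with $\nu_{ij}$ the outward conormal of $\Sigma_{ij}$. Regrouping the boundary integrals by triple junction (the products $f\cdot g$ and the conormals $\nu$ being unaffected by reordering the index pair), $\sum_{1\le i<j\le m}\int_{\redS_{ij}}g_{ij}\nabla_{\nu_{ij}}f_{ij}\gamma_{\sdimn}(x)dx$ equals $\sum_{1\le i<j<k\le m}\int_{\redS_{ij}\cap\redS_{jk}\cap\redS_{ki}}[g_{ij}\nabla_{\nu_{ij}}f_{ij}+g_{jk}\nabla_{\nu_{jk}}f_{jk}+g_{ki}\nabla_{\nu_{ki}}f_{ki}]\gamma_{\sdimn}(x)dx$. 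Subtracting this from the boundary terms of \eqref{quaddef}, the $\nabla_{\nu}f\cdot g$ contributions cancel exactly, leaving precisely $\sum_{1\le i<j<k\le m}\int_{\redS_{ij}\cap\redS_{jk}\cap\redS_{ki}}[q_{ij}f_{ij}g_{ij}+q_{jk}f_{jk}g_{jk}+q_{ki}f_{ki}g_{ki}]\gamma_{\sdimn}(x)dx$, which is the claimed formula. Every integrand in it — $\langle\nabla f_{ij},\nabla g_{ij}\rangle$, $f_{ij}g_{ij}(\vnormt{A}^{2}+1)$, and $q_{ij}f_{ij}g_{ij}$ — is symmetric under exchanging $F$ and $G$, so $Q(F,G)=Q(G,F)$ follows at once.

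The step needing genuine care is the divergence theorem on $\Sigma_{ij}$, since this surface is noncompact and only piecewise smooth. I would invoke Assumption \ref{as1} to see that $\Sigma_{ij}$ is a bona fide $C^{\infty}$ $\sdimn$-manifold with boundary $\redS_{ij}$ once a neighborhood of the strata $M_{\sdimn-2}\cup M_{\sdimn-3}$ is removed, that the compact support of $F,G$ keeps the computation away from those strata and from infinity, and that these removed sets carry no boundary mass relevant here. Granting that, the remainder is just the product rule and the Gaussian-weight identity $\mathrm{div}_{\tau}(\gamma_{\sdimn}\nabla f_{ij})=\gamma_{\sdimn}\mathcal{L}_{ij}f_{ij}$ recorded above.
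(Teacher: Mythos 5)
Your proposal is correct and follows essentially the same route as the paper's own proof: integrate by parts against the Gaussian weight using the identity $\mathrm{div}_{\tau}(\gamma_{\sdimn}\nabla f_{ij})=\gamma_{\sdimn}\mathcal{L}_{ij}f_{ij}$ and the tangential divergence theorem, then observe that the resulting $\int_{\redb\Sigma_{ij}}g_{ij}\nabla_{\nu_{ij}}f_{ij}\gamma_{\sdimn}$ boundary terms, regrouped by triple junction, cancel the $\nabla_{\nu}f\cdot g$ contributions in \eqref{quaddef}, leaving only the $q\,fg$ terms. The paper leaves this substitution and cancellation implicit ("Substituting into the definition of $Q(F,G)$ ... completes the proof"), so your write-up is simply a more explicit version of the same argument.
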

\begin{proof}
From the divergence theorem for an $\sdimn$-dimensional $C^{\infty}$ orientable hypersurface $\Sigma$ with $C^{\infty}$ boundary, if $f,g\colon\Sigma\to\R$, then
\begin{flalign*}
&\int_{\Sigma}(\mathcal{L}f)g\gamma_{\sdimn}(x)dx
\stackrel{\eqref{three4.3}}{=}\int_{\Sigma}(\Delta f-\langle x,\nabla f\rangle)g\gamma_{\sdimn}(x)dx
=\int_{\Sigma}\mathrm{div}_{\tau}(\gamma_{\sdimn}(x)\nabla f)gdx\\
&=\int_{\Sigma}\Big([\mathrm{div}_{\tau}(g\gamma_{\sdimn}(x)\nabla f)]-\langle\nabla f,\nabla g\rangle\gamma_{\sdimn}(x)\Big)dx
=\int_{\partial \Sigma}\langle\nabla f,\nu\rangle g\gamma_{\sdimn}(x)-\int_{\Sigma}\langle\nabla f,\nabla g\rangle\gamma_{\sdimn}(x)dx.
\end{flalign*}
As usual, $\nu$ denotes the exterior pointing unit normal to $\partial\Sigma$.  Substituting into the definition of $Q(F,G)$ and using \eqref{three4.3} and \eqref{three4.5} completes the proof.
\end{proof}

\begin{lemma}[{\cite[Lemma 1]{barchiesi16}}]\label{lemma60}
Let $\Omega_{1},\ldots,\Omega_{m}$ satisfy Assumption \ref{as1}.  Then there exists a sequence of $C^{\infty}$ functions $\eta_{1}\leq \eta_{2}\leq\cdots\colon\cup_{i=1}^{m}\redb\Omega_{i}\to[0,1]$ supported in $M_{\sdimn}\cup M_{\sdimn-1}\cup M_{\sdimn-2}$ (using the notation of Assumption \ref{as1}) such that
$$\forall\,x\in \cup_{i,j=1}^{m}\redb\Sigma_{ij},\quad\lim_{u\to\infty}\eta_{u}(x)=1,$$
$$\lim_{u\to\infty}\sum_{1\leq i<j\leq m}\int_{\Sigma_{ij}}[(1-\eta_{u})^{2}+\vnorm{\nabla(1-\eta_{u})}^{2}]\gamma_{\sdimn}(x)dx=0.$$
\end{lemma}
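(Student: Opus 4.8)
The plan is to build the cutoff functions by combining two ingredients: a radial cutoff that handles non-compactness at infinity, and a local cutoff near the lower-dimensional strata $M_{\sdimn-3}$ (and near the boundary pieces where $\Sigma_{ij}$ meets $C$) that handles the fact that $\cup_{i=1}^m\redb\Omega_i$ is not a smooth manifold but a stratified space. For the radial part, first I would fix a smooth function $\phi\colon[0,\infty)\to[0,1]$ with $\phi\equiv 1$ on $[0,1]$, $\phi\equiv 0$ on $[2,\infty)$, and $\absf{\phi'}\leq 2$, and for a scale $R>0$ set $\phi_R(x)\colonequals\phi(\vnormt{x}/R)$. Since the Gaussian weight $\gamma_{\sdimn}$ decays like $e^{-\vnormt{x}^2/2}$ and $\Sigma\colonequals\cup_{1\leq i<j\leq m}\Sigma_{ij}$ has at most Euclidean-polynomial volume growth on dyadic annuli (this is where Assumption \ref{as1}, giving local finiteness of the strata, is used, together with a monotonicity/density bound for stationary varifolds), one gets $\int_{\Sigma\cap\{\vnormt{x}\geq R\}}(1+\vnormt{\nabla\phi_R}^2)\gamma_{\sdimn}(x)dx\to 0$ as $R\to\infty$; note $\vnormt{\nabla\phi_R}\leq 2/R$ is bounded, so the gradient term is actually dominated by the zeroth-order term.

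For the part near the singular set, I would invoke the standard capacity estimate: a relatively closed, $(\sdimn-3)$-rectifiable set with locally finite $(\sdimn-3)$-dimensional Hausdorff measure (Assumption \ref{as1}(iv)), and likewise the $(\sdimn-2)$-dimensional edge set where three sheets meet, has vanishing $W^{1,2}$-capacity inside an $\sdimn$-dimensional manifold, because the codimension is at least $2$. Concretely, for $\epsilon>0$ one constructs a smooth function $\psi_\epsilon$ that equals $0$ on an $\epsilon$-neighborhood of $M_{\sdimn-3}$ and equals $1$ outside a $2\sqrt\epsilon$-neighborhood, with $\int_{\Sigma}\vnormt{\nabla\psi_\epsilon}^2\gamma_{\sdimn}(x)dx\to 0$ and $\int_{\Sigma}(1-\psi_\epsilon)^2\gamma_{\sdimn}(x)dx\to 0$ as $\epsilon\to 0$ — this is the classical logarithmic-cutoff trick, using that the measure of the $\rho$-neighborhood of a codimension-$\geq 2$ set is $O(\rho^2)$ locally. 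The desired $\eta_u$ is then the product $\phi_{R_u}\psi_{\epsilon_u}$ for a suitably chosen sequence $R_u\to\infty$, $\epsilon_u\to 0$; crucially this product is supported in $M_{\sdimn}\cup M_{\sdimn-1}\cup M_{\sdimn-2}$ as required, it is smooth there, it increases to $1$ pointwise on the regular part, and by the two estimates above (plus $(a+b)^2\leq 2a^2+2b^2$ and $\vnormt{\nabla(\phi\psi)}^2\leq 2\vnormt{\nabla\phi}^2\psi^2+2\phi^2\vnormt{\nabla\psi}^2$) the $W^{1,2}$-type quantity $\sum_{i<j}\int_{\Sigma_{ij}}[(1-\eta_u)^2+\vnormt{\nabla(1-\eta_u)}^2]\gamma_{\sdimn}(x)dx$ tends to $0$. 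One can then pass to a subsequence and replace each $\eta_u$ by $\max_{v\leq u}\eta_v$ (still smooth after mollification, or one simply arranges the scales so the sequence is already monotone) to obtain the monotonicity $\eta_1\leq\eta_2\leq\cdots$.

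The main obstacle is the volume-growth bound needed for the radial estimate: one must verify that $\gamma_{\sdimn}(\{x\in\Sigma\colon r\leq\vnormt{x}\leq r+1\})$ is summable (or at least $\to 0$), which requires controlling the $\sdimn$-dimensional area of $\Sigma$ on large annuli. For the top stratum $M_{\sdimn}$ this follows from the stationarity of $\Sigma$ under the Gaussian-weighted area functional together with a density/monotonicity argument (Euclidean volume growth at most polynomial, killed by the Gaussian), and the lower strata contribute even less since they have smaller dimension and are locally finite; the edge and triple-junction terms are handled the same way. This is exactly the content of \cite[Lemma 1]{barchiesi16}, which treats the one-set case, and the multi-bubble version is obtained by applying that construction on each sheet $\Sigma_{ij}$ while using Assumption \ref{as1}(ii)--(iv) to ensure the cutoffs match up across the junctions and stay supported away from $M_{\sdimn-3}$. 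I would therefore present the proof as a reduction to \cite[Lemma 1]{barchiesi16} applied sheet-by-sheet, with the stratified structure from Assumption \ref{as1} supplying the compatibility of the local cutoffs.
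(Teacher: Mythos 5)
Your proposal is correct and matches the paper's approach: the paper's proof is exactly the one-line reduction you arrive at, namely that by Assumption \ref{as1} the residual set $\cup_{i=1}^{m}\partial\Omega_{i}\setminus(M_{\sdimn}\cup M_{\sdimn-1}\cup M_{\sdimn-2})$ has zero $(\sdimn-2)$-dimensional Hausdorff measure, so the cutoff construction of \cite[Lemma 1]{barchiesi16} applies. Your expanded sketch of that construction (radial cutoff plus logarithmic capacity cutoff near the codimension-$\geq 2$ stratum) is a reasonable account of what the cited lemma does, though the capacity estimate is more safely run from the vanishing Hausdorff measure via coverings than from an $O(\rho^{2})$ Minkowski-content bound.
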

\begin{proof}
By Assumption \ref{as1}, $S\colonequals\cup_{i=1}^{m}\partial\Omega_{i}\setminus(M_{\sdimn}\cup M_{\sdimn-1}\cup M_{\sdimn-2})$ has zero $(\sdimn-2)$-dimensional Hausdorff measure, so the assertion follows e.g. by \cite[Lemma 1]{barchiesi16}.  More specifically, fix $\epsilon,r>0$, and note that there exist $x^{(1)},\ldots,x^{(m)}\in\R^{\adimn}$ and $0<r_{1},\ldots,r_{m}<1/2$ such that
$$\{x\in S\colon\vnorm{x}\leq r\}\subset\cup_{i=1}^{m}B(x_{i},r_{i}),\qquad \sum_{i=1}^{m}r_{i}^{\sdimn-2}<\epsilon.$$
Then, for each $1\leq i\leq m$, let $0\leq \theta_{i}\leq 1$ be a smooth function such that $\theta_{i}=0$ inside $B(x_{i},2r_{i})$, such that $\theta_{i}=1$ outside $B(x_{i},3r_{i})$, and such that $\vnorm{\nabla\theta_{i}}\leq 2/r_{i}$ in $B(x_{i},3r_{i})\setminus B(x_{i},2r_{i})$.  Let $0\leq\theta_{0}\leq 1$ be a smooth function such that $\theta_{0}=1$ inside $B(0,r-3)$ and $\theta_{0}=0$ outside $B(0,r-2)$, and then define $\widetilde{\eta}_{r,\epsilon}\colonequals\min_{0\leq i\leq m}\theta_{i}$.  Then
\begin{flalign*}
&\sum_{1\leq i<j\leq m}\int_{\Sigma_{ij}}\vnormf{\nabla(1-\widetilde{\eta}_{r,\epsilon})}^{2}\gamma_{\sdimn}(x)dx\\
&\qquad\leq\int_{S\cap B(0,r-2)\setminus B(0,r-3)}4\gamma_{\sdimn}(x)dx+\sum_{k=1}^{m}\int_{S\cap B(x_{k},3r_{k})\setminus B(x_{k},2r_{k})}\frac{4}{r_{k}^{2}}\gamma_{\sdimn}(x)dx\\
&\qquad\leq4m\Big(r^{\sdimn}e^{-(r-3)^{2}/2}+3^{\sdimn}\sum_{k=1}^{m}r_{k}^{\sdimn-2}\Big)
\leq4m\Big(r^{\sdimn}e^{-(r-3)^{2}/2}+3^{\sdimn}\epsilon\Big).
\end{flalign*}
The penultimate inequality used Lemma \ref{lemma28.8}.  Finally, let $\epsilon\colonequals\epsilon(r)\to0$ as $r\to\infty$, and define $\eta_{r}\colonequals \eta_{r,\epsilon(r)}$.  The bound on the integral of $(1-\eta_{u})^{2}$ follows similarly from Lemma \ref{lemma28.8}, since
\begin{flalign*}
&\sum_{1\leq i<j\leq m}\int_{\Sigma_{ij}}(1-\widetilde{\eta}_{r,\epsilon})^{2}\gamma_{\sdimn}(x)dx\\
&\qquad\leq\int_{S\cap B(0,r-2)\setminus B(0,r-3)}4\gamma_{\sdimn}(x)dx+\sum_{k=1}^{m}\int_{S\cap B(x_{k},3r_{k})\setminus B(x_{k},2r_{k})}4\gamma_{\sdimn}(x)dx\\
&\qquad\leq4m\Big(r^{\sdimn}e^{-(r-3)^{2}/2}+3^{\sdimn}\sum_{k=1}^{m}r_{k}^{\sdimn}\Big)
\leq4m\Big(r^{\sdimn}e^{-(r-3)^{2}/2}+3^{\sdimn}\epsilon\Big).
\end{flalign*}
\end{proof}

\begin{lemma}[\embolden{Non-Compact Variations}]\label{lemma97}
Let $\Omega_{1},\ldots,\Omega_{m}$ satisfy Assumption \ref{as1}.  Let $F,G\in\mathcal{F}$.  Assume that
$$\sum_{1\leq i<j\leq m}\int_{\Sigma_{ij}}\abs{L_{ij}f_{ij}}^{2}\gamma_{\sdimn}(x)dx<\infty.$$
Assume $\forall\,1\leq i<j<k\leq m$, $\forall$ $x\in(\redS_{ij})\cap(\redS_{jk})\cap(\redS_{ki})$, the following holds at $x$.
\begin{equation}\label{four75}
\nabla_{\nu_{ij}}f_{ij}+q_{ij}f_{ij}=\nabla_{\nu_{jk}}f_{jk}+q_{jk}f_{jk}=\nabla_{\nu_{ki}}f_{ki}+q_{ki}f_{ki}.
\end{equation}
Then $Q(F,F)$ and $Q(F,G)$ are well-defined real numbers.  Moreover,
$$Q(F,F)=-\langle LF,F\rangle,\qquad Q(F,G)=-\langle LF,G\rangle.$$
Also, $\exists$ a sequence $\phi_{1},\phi_{2},\ldots\in C_{0}^{\infty}(\Sigma)$ with  $0\leq\phi_{1}\leq\phi_{2}\leq\cdots \leq1$ on $\R^{\adimn}$ converging pointwise to $1$ such that
$$\lim_{u\to\infty}Q(\phi_{u}F,G)=\lim_{u\to\infty}Q(\phi_{u}F,\phi_{u}G)=Q(F,G).$$
\end{lemma}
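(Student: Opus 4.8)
The plan is to use hypothesis \eqref{four75} to kill all of the singular-set boundary terms in $Q$, so that it collapses to the bulk pairing $-\langle LF,G\rangle$, and then to recover the cutoff identity by an integration by parts in which the new boundary terms cancel exactly. First I would note: fix $1\le i<j<k\le m$ and $x\in(\redS_{ij})\cap(\redS_{jk})\cap(\redS_{ki})$; by \eqref{four75} the three quantities $\nabla_{\nu_{ij}}f_{ij}+q_{ij}f_{ij}$, $\nabla_{\nu_{jk}}f_{jk}+q_{jk}f_{jk}$, $\nabla_{\nu_{ki}}f_{ki}+q_{ki}f_{ki}$ agree at $x$ with a common value $c=c(x)$, so the integrand of the triple-junction sum in \eqref{quaddef} equals $c\,(g_{ij}+g_{jk}+g_{ki})=0$ since $G\in\mathcal F$. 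Hence every boundary term in \eqref{quaddef} vanishes identically and $Q(F,G)=-\sum_{1\le i<j\le m}\int_{\Sigma_{ij}}g_{ij}L_{ij}f_{ij}\,\gamma_{\sdimn}(x)\,dx=-\langle LF,G\rangle$, and likewise $Q(F,F)=-\langle LF,F\rangle$ (take $G=F$, which also obeys $f_{ij}+f_{jk}+f_{ki}=0$). These are finite by Cauchy-Schwarz, $\absf{\langle LF,G\rangle}\le\sum_{i<j}\vnormtf{L_{ij}f_{ij}}_{L^2(\gamma_{\sdimn})}\,\vnormtf{g_{ij}}_{L^2(\gamma_{\sdimn})}<\infty$, using the standing hypotheses. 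This settles the first two assertions.

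For the cutoff statement I would set $\phi_u\colonequals\eta_u\,\zeta_{R_u}$, where $(\eta_u)$ is the sequence of Lemma \ref{lemma60} (supported in $M_{\sdimn}\cup M_{\sdimn-1}\cup M_{\sdimn-2}$, hence vanishing near the worst stratum $M_{\sdimn-3}$, with $\eta_u\to1$ pointwise and $\vnormtf{1-\eta_u}_{L^2(\gamma_{\sdimn})}+\vnormtf{\nabla(1-\eta_u)}_{L^2(\gamma_{\sdimn})}\to0$ on each $\Sigma_{ij}$), and $\zeta_R(x)\colonequals\zeta(\vnormtf{x}/R)$ for a fixed decreasing $\zeta\in C^\infty([0,\infty))$ with $\zeta\equiv1$ on $[0,1]$ and $\zeta\equiv0$ on $[2,\infty)$, and $R_u\uparrow\infty$. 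Being products of increasing $[0,1]$-valued functions, the $\phi_u$ are increasing; each is $C^\infty$ with compact support (from the radial factor), $\phi_u\to1$ pointwise on $\cup_{i<j}\Sigma_{ij}$, $\phi_uF,\phi_uG\in\mathcal F$, and $\nabla\phi_u=\zeta_{R_u}\nabla\eta_u+\eta_u\nabla\zeta_{R_u}$ with $\absf{\eta_u\nabla\zeta_{R_u}}\le C R_u^{-1}$ supported in $\{\vnormtf{x}\ge R_u\}$, and $\vnormtf{\zeta_{R_u}\nabla\eta_u}_{L^2(\gamma_{\sdimn})}\le\vnormtf{\nabla\eta_u}_{L^2(\gamma_{\sdimn})}\to0$.

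The key step is to rewrite $Q(\phi_uF,G)$. Since $\nabla_{\nu_{ij}}(\phi_uf_{ij})=(\nabla_{\nu_{ij}}\phi_u)f_{ij}+\phi_u\nabla_{\nu_{ij}}f_{ij}$, repeating the computation of the first paragraph (again using \eqref{four75} and $G\in\mathcal F$ to kill the $\phi_u\,c\,(g_{ij}+g_{jk}+g_{ki})$ contribution) shows that the triple-junction sum of $Q(\phi_uF,G)$ equals $\sum_{i<j<k}\int_{(\redS_{ij})\cap(\redS_{jk})\cap(\redS_{ki})}[(\nabla_{\nu_{ij}}\phi_u)f_{ij}g_{ij}+(\nabla_{\nu_{jk}}\phi_u)f_{jk}g_{jk}+(\nabla_{\nu_{ki}}\phi_u)f_{ki}g_{ki}]\,\gamma_{\sdimn}(x)\,dx$. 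Meanwhile the Leibniz rule gives $L_{ij}(\phi_uf_{ij})=\phi_uL_{ij}f_{ij}+f_{ij}\Delta\phi_u+2\langle\nabla\phi_u,\nabla f_{ij}\rangle-f_{ij}\langle x,\nabla\phi_u\rangle$; substituting into the bulk term of \eqref{quaddef} and integrating the $f_{ij}\Delta\phi_u$ piece by parts against $\gamma_{\sdimn}$ on $\Sigma_{ij}$ (as in Lemma \ref{lemma32.5}), the $\langle x,\nabla\phi_u\rangle$ terms cancel, the gradient terms combine into $\langle\nabla\phi_u,\,f_{ij}\nabla g_{ij}-g_{ij}\nabla f_{ij}\rangle$, and the boundary term is $-\int_{\partial\Sigma_{ij}}(\nabla_{\nu_{ij}}\phi_u)f_{ij}g_{ij}\,\gamma_{\sdimn}(x)\,dx$, which, after regrouping by triples and using that $\phi_u$ vanishes near $M_{\sdimn-3}$, exactly cancels the triple-junction sum. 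The upshot is the identity $Q(\phi_uF,G)=-\sum_{i<j}\int_{\Sigma_{ij}}\phi_u\,g_{ij}L_{ij}f_{ij}\,\gamma_{\sdimn}(x)\,dx+\sum_{i<j}\int_{\Sigma_{ij}}\langle\nabla\phi_u,\,f_{ij}\nabla g_{ij}-g_{ij}\nabla f_{ij}\rangle\,\gamma_{\sdimn}(x)\,dx$, whose first sum tends to $-\langle LF,G\rangle=Q(F,G)$ by dominated convergence, and whose second sum tends to $0$: the $\eta_u\nabla\zeta_{R_u}$ part is $O(R_u^{-1})\sum_{i<j}\vnormtf{f_{ij}}_{L^2}\vnormtf{\nabla g_{ij}}_{L^2}$, while the $\zeta_{R_u}\nabla\eta_u$ part is at most $\sum_{i<j}\vnormtf{\nabla\eta_u}_{L^2(\gamma_{\sdimn})}\,\vnormtf{f_{ij}\nabla g_{ij}-g_{ij}\nabla f_{ij}}_{L^2(\gamma_{\sdimn})}$ once one reduces, by density, to $F,G$ for which $f_{ij}\nabla g_{ij}-g_{ij}\nabla f_{ij}\in L^2(\gamma_{\sdimn})$, and then applies Lemma \ref{lemma60}. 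The same computation handles $Q(\phi_uF,\phi_uG)$, the only additional terms being $\int\langle\nabla\phi_u,\,\cdot\,\rangle\phi_u\,\gamma_{\sdimn}$ and $\int\absf{\nabla\phi_u}^2f_{ij}g_{ij}\,\gamma_{\sdimn}$, controlled the same way; this yields $\lim_uQ(\phi_uF,G)=\lim_uQ(\phi_uF,\phi_uG)=Q(F,G)$.

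The hard part will be the bulk cross-term $\int_{\Sigma_{ij}}\langle\nabla\phi_u,\,f_{ij}\nabla g_{ij}-g_{ij}\nabla f_{ij}\rangle\,\gamma_{\sdimn}$: since $f_{ij}$ and $\nabla g_{ij}$ are only separately in $L^2(\gamma_{\sdimn})$, their product lies a priori only in $L^1(\gamma_{\sdimn})$, so a crude Cauchy-Schwarz against $\nabla\phi_u\in L^2(\gamma_{\sdimn})$ is unavailable. The splitting $\nabla\phi_u=\zeta_{R_u}\nabla\eta_u+\eta_u\nabla\zeta_{R_u}$ disposes of the behavior at infinity, where the radial piece pairs with an $L^1$ function with a gain $R_u^{-1}$; near $M_{\sdimn-3}$ one must instead either exploit that $\nabla\eta_u$ has support of $\gamma_{\sdimn}$-measure tending to $0$ together with absolute continuity of $\int\absf{f_{ij}\nabla g_{ij}}\,\gamma_{\sdimn}$ (valid when $\vnormtf{\nabla\eta_u}_\infty$ stays bounded along a subsequence), or reduce at the outset to a dense subclass of $F,G$ (bounded, or polynomially bounded, functions) for which the pairing is legitimate and then pass to the limit in both $Q$ and $\langle L\,\cdot,\,\cdot\,\rangle$. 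A secondary point needing care is that the two integrations by parts produce boundary terms which cancel \emph{identically}, so that no trace estimates on the lower strata $M_{\sdimn-1},M_{\sdimn-2}$ ever enter.
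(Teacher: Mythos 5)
Your overall strategy is the same as the paper's: use \eqref{four75} together with $g_{ij}+g_{jk}+g_{ki}=0$ to make the triple-junction sum in \eqref{quaddef} vanish (so $Q(F,G)=-\langle \L F,G\rangle$, finite by Cauchy--Schwarz), then commute a cutoff through $Q$ by integration by parts and show the commutator term tends to zero. The paper implements the second step by invoking the symmetric form of Lemma \ref{lemma32.5} to write $Q(\phi F,G)-Q(F,\phi G)=\sum_{i<j}\int_{\Sigma_{ij}}\bigl(f_{ij}\langle\nabla\phi,\nabla g_{ij}\rangle-g_{ij}\langle\nabla\phi,\nabla f_{ij}\rangle\bigr)\gamma_{\sdimn}\,dx$, which is your identity in slightly different clothing.

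The one unresolved point you flag at the end is real, but it is self-inflicted: it comes entirely from your decision to include the singular-set cutoff $\eta_u$ of Lemma \ref{lemma60} in $\phi_u$, which forces you to pair $\nabla\eta_u\in L^{2}(\gamma_{\sdimn})$ against the merely-$L^{1}$ function $f_{ij}\nabla g_{ij}-g_{ij}\nabla f_{ij}$. The paper does not use Lemma \ref{lemma60} here at all; it takes a single radial cutoff $\phi$ with $\phi=1$ on $\{\vnormt{x}\le r\}$, $\phi=0$ on $\{\vnormt{x}>r+2\}$, and, crucially, $\vnormt{\nabla\phi}\le 1$ \emph{pointwise}. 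Then no $L^{2}$ pairing against $\nabla\phi$ is needed: the commutator is bounded by $\int_{\{\vnormt{x}\ge r\}}(\absf{f_{ij}}\vnormt{\nabla g_{ij}}+\absf{g_{ij}}\vnormt{\nabla f_{ij}})\gamma_{\sdimn}\,dx\le \vnormtf{f_{ij}1_{\{\vnormt{x}\ge r\}}}_{L^{2}(\gamma_{\sdimn})}\vnormtf{\nabla g_{ij}}_{L^{2}(\gamma_{\sdimn})}+\vnormtf{g_{ij}1_{\{\vnormt{x}\ge r\}}}_{L^{2}(\gamma_{\sdimn})}\vnormtf{\nabla f_{ij}}_{L^{2}(\gamma_{\sdimn})}$, which tends to $0$ by dominated convergence since $F,G\in\mathcal{F}$. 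If you drop the $\eta_u$ factor (or retain it only to justify admissibility of $\phi_u F$ in Lemma \ref{lemma32.5}, while estimating the commutator using the $L^{\infty}$ bound on the radial factor alone), your argument closes without the density reduction or the $\vnormtf{\nabla\eta_u}_{\infty}$ hypothesis you were forced to contemplate.
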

\begin{proof}
Let  $\Sigma\colonequals\cup_{1\leq i<j\leq m}\Sigma_{ij}$.  Let $\phi\in C_{0}^{\infty}(\Sigma)$ with $0\leq\phi\leq 1$, $\phi=1$ when $\vnorm{x}\leq r$, $\phi=0$ when $\vnorm{x}>r+2$ and $\vnorm{\nabla\phi}\leq1$ on $\Sigma$.  From Lemma \ref{lemma32.5},
$$Q(\phi F,G)-Q(F,\phi G)=\sum_{1\leq i<j\leq m}\int_{\Sigma_{ij}}\Big(f_{ij}\langle\nabla \phi,\nabla g_{ij}\rangle-g_{ij}\langle\nabla \phi,\nabla f_{ij}\rangle\Big)\gamma_{\sdimn}(x)dx.$$
So, as $r\to\infty$, $\abs{Q(\phi F,G)-Q(F,\phi G)}$ converges to $0$ by the Dominated Convergence Theorem and the Cauchy-Schwarz inequality, using $F,G\in\mathcal{F}$ and Definition \ref{def1}.
By the assumption \eqref{four75} on $F$, $Q(F,\phi G)\stackrel{\eqref{quaddef}}{=}\sum_{1\leq i<j\leq m}-\int_{\Sigma_{ij}}\phi g_{ij}L_{ij} f_{ij}\gamma_{\sdimn}(x)dx$.  So, as $r\to\infty$, $Q(F,\phi G)$ converges to $\langle-\L F,G\rangle$.  Therefore, as $r\to\infty$, $Q(\phi F,G)$ also converges to $\langle-\L F,G\rangle$.  The second assertion follows from the first, since $\abs{Q(\phi F,\phi G)-Q(F,\phi^{2}G)}$ converges to zero as $r\to\infty$ as well.
\end{proof}

\section{Curvature Bounds}

Below we denote $\Sigma\colonequals\cup_{1\leq i<j\leq m}\Sigma_{ij}$.

\begin{remark}\label{rk30}
Let $v\in\R^{\adimn}$.  For all $1\leq i<j\leq m$ let $f_{ij}\colon \Sigma_{ij}\to\R$ be defined by $f_{ij}\colonequals\langle v,N_{ij}\rangle$.  Then for all $1\leq i<j\leq m$, $L_{ij}f_{ij}=f_{ij}$ by Lemma \ref{lemma45} and Lemma \ref{lemma24}.  Also, the term in Lemma \ref{lemma29.9} is zero, since $X\colonequals v$ is the constant vector field in this case, i.e. \eqref{four75} holds.  For more detail on the latter, see \cite[(Eq. 3.13) and p. 476]{hutchings02}, where it is noted that $X\colonequals v$ corresponds to an infinitesimal translation with associated Jacobi functions $\{\langle N_{ij},v\rangle\}_{1\leq i<j\leq m}$.
\end{remark}

\begin{lemma}\label{lemma55}
Let $\Lambda$ be the set of solutions $\{\lambda_{ij}\}_{1\leq i<j\leq m}$ of the middle equation of \eqref{nine5.3}.  Then $\Lambda$ is a vector space of dimension equal to $m-1$.  Also, $\Lambda$ has an orthonormal basis (with respect to $\langle\cdot,\cdot\rangle$ defined in Lemma \ref{lemma25.3}) consisting of vectors all of whose components are nonzero.
\end{lemma}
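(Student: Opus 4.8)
The plan is to identify $\Lambda$ explicitly as the space of ``cell potential differences'', using the structure theorem for minimizers already recorded in Remark \ref{rk27}, and then to extract a basis with no vanishing components by a genericity argument.

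First I would note that $\Lambda$ is a linear space: the middle equation of \eqref{nine5.3} is the homogeneous linear system $\lambda_{ij}+\lambda_{jk}+\lambda_{ki}=0$, imposed over those triples with $\Sigma_{ij}\cap\Sigma_{jk}\cap\Sigma_{ki}\neq\emptyset$, on the space of antisymmetric tuples $(\lambda_{ij})$ (recall $\lambda_{ij}=-\lambda_{ji}$), so $\Lambda$ is the kernel of a linear map. For the lower bound on the dimension, observe that for every $\mu=(\mu_1,\dots,\mu_m)\in\R^m$ the antisymmetric tuple $\lambda_{ij}\colonequals\mu_i-\mu_j$ satisfies $\lambda_{ij}+\lambda_{jk}+\lambda_{ki}=0$ for \emph{all} triples, hence lies in $\Lambda$; equivalently $\Lambda$ contains the space $D_2$ of Lemma \ref{lemma25.3}, which is precisely the image of the linear map $\Phi\colon\R^m\to\R^{\binom{m}{2}}$, $\Phi(\mu)\colonequals(\mu_i-\mu_j)_{i<j}$, whose kernel is the line of constant vectors; so $\dim\Lambda\geq\dim D_2=m-1$. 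For the reverse inequality I would invoke Remark \ref{rk27} (that is, \cite[Theorem 4.9(ii)]{milman18a}): for a minimizer, satisfying the middle equation of \eqref{nine5.3} is equivalent to being of the form $\lambda_{ij}=\mu_i-\mu_j$ with $\sum_i\mu_i=0$. Hence $\Lambda=D_2$, and $\Phi$ restricted to $P\colonequals\{\mu\in\R^m\colon\sum_i\mu_i=0\}$ is a linear isomorphism onto $\Lambda$, so $\dim\Lambda=m-1$.

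For the orthonormal basis with all components nonzero I would argue by genericity. Each of the finitely many subspaces $\{\mu\in P\colon\mu_i=\mu_j\}$ (for $i\neq j$) is a proper subspace of $P$, so their union has empty interior in $P$; for $\mu$ outside this union, every component $\mu_i-\mu_j$ of $\Phi(\mu)$ is nonzero. Thus the set of elements of $\Lambda$ with all components nonzero is dense in $\Lambda$, and in particular no coordinate functional $\lambda\mapsto\lambda_{ij}$ vanishes identically on $\Lambda$. Now fix any orthonormal basis $w_1,\dots,w_{m-1}$ of $\Lambda$ with respect to the inner product $\langle\cdot,\cdot\rangle$ of Lemma \ref{lemma25.3}; for $R\in SO(m-1)$ acting on $\Lambda\cong\R^{m-1}$ through this basis, $Rw_1,\dots,Rw_{m-1}$ is again orthonormal. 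For each fixed $k$ and each fixed coordinate $(i,j)$, the function $R\mapsto(Rw_k)_{ij}$ is a polynomial on $SO(m-1)$ that is not identically zero: as $R$ varies, $Rw_k$ runs over the unit sphere of $\Lambda$, which has dimension $m-2\geq1$ since $m\geq3$, and the $(i,j)$-coordinate does not vanish identically on $\Lambda$. Hence each such function has a zero set of measure zero in $SO(m-1)$, and a generic $R$ avoids the finite union of these zero sets, yielding an orthonormal basis of $\Lambda$ all of whose vectors have every component nonzero.

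The crux is the inequality $\dim\Lambda\leq m-1$ in the second paragraph: this is exactly the assertion that in a minimizing cluster the triple-junction relations suffice to force every interface multiplier $\lambda_{ij}$ to equal a difference $\mu_i-\mu_j$, which rests on the fine structure of minimizers recorded in Assumption \ref{as1} and is imported from \cite{milman18a} via Remark \ref{rk27}. Absent minimality the defining system of $\Lambda$ need not pin this down at all — for instance if no triple junctions were present then $\Lambda$ would be all of $\R^{\binom{m}{2}}$ — so this step genuinely uses that $\Omega_1,\dots,\Omega_m$ minimize Problem \ref{prob1}; everything else is linear algebra and a standard genericity argument.
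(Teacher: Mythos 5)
Your proposal is correct and follows essentially the same route as the paper: the paper also realizes the solution space as the potential differences (it writes them as $\lambda_{ij}(y)=-\langle y,N_{ij}\rangle$ using the conjectural simplex normals, which is the same subspace $D_{2}$ as your $\Phi(P)$), and it also obtains the nonvanishing orthonormal basis by the same genericity observation (choosing basis vectors $y$ not perpendicular to any $N_{ij}$, i.e.\ with $\mu_{i}\neq\mu_{j}$ for all $i\neq j$). The one place you are more careful than the paper is the upper bound $\dim\Lambda\leq m-1$: the paper simply attributes ``dimension equal to $m-1$'' to Lemma \ref{lemma25.3}, which only concerns $D_{2}$ (cocycle relations over \emph{all} triples), whereas $\Lambda$ imposes them only on triples with nonempty junction; your explicit appeal to Remark \ref{rk27} to identify $\Lambda$ with $D_{2}$ for minimizers closes that small gap.
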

\begin{proof}
From Lemma \ref{lemma25.3}, $\Lambda$ (i.e. $D_{2}$) has dimension equal to $m-1$ (since $D_{1}$ has dimension $\binom{m}{2}-m+1$ and $D_{2}\oplus D_{1}$ has dimension $\binom{m}{2}$).  Consider the sets described in Conjecture \ref{conj0}.  These sets satisfy $H_{ij}(x)=0$ for every $x\in\Sigma_{ij}$, and they also satisfy all equations from \eqref{nine5.3}, for any $y\in\R^{m-1}$.  We can then treat $N_{ij}$ as being constant functions of $y$, so that $\lambda_{ij}(y)=-\langle y,N_{ij}\rangle$ for all $1\leq i<j\leq m$ is a solution of the equations \eqref{nine5.3}.  By considering any $y\in\R^{m-1}$, linear algebra also implies then that $\Lambda$ has dimension at least $m-1$, since the only $y\in\R^{m-1}$ such that $\langle y,N_{ij}\rangle=0$ for all $1\leq i<j\leq m$ is $y=0$.  Finally, choosing an orthonormal basis of $y$'s of $\R^{m-1}$ so that each basis element is not perpendicular to $N_{ij}$ for all $1\leq i<j\leq m$, then we have $m-1$ nonvanishing solutions of  \eqref{nine5.3}.
\end{proof}

Since $Q$ defined in \eqref{quaddef} is a symmetric quadratic form by Lemma \ref{lemma32.5}, we anticipate that a function minimizing this quadratic form is an eigenfunction of $\L$.  When the minimum exists, we can get an eigenfunction of $\L$ in this way.  However, it is possible that the function minimizing $Q$ might change sign on connected components of $\Sigma$, contrary to our intuition that a fundamental tone should not change sign.  This sign changing property also causes problems for ensuing arguments, since the non-sign changing of the fundamental tone was crucial in the curvature bounds of \cite{colding12a,zhu16}.  To get around this issue, we instead minimize $Q$ over functions whose sign does not change on connected components of $\Sigma$.  By Lemma \ref{lemma55}, such a restriction is nontrivial.  It is still possible that such an $F$ minimizing $Q$ might vanish on the boundary of $\Sigma$.  We will deal with this issue in Lemmas \ref{lemma29} and \ref{rkorth}.

For any hypersurface $\Sigma\subset\R^{\adimn}$ (possibly with boundary), we define
\begin{equation}\label{seven0}
\pcon=\pcon(\Sigma)
\colonequals-\inf_{\substack{G\in \mathcal{F}\cap C_{0}^{\infty}(\Sigma)\colon\langle G,G\rangle=1,\\
G\,\,\mathrm{does}\,\,\mathrm{not}\,\,\mathrm{change}\,\,\mathrm{sign}\,\,\mathrm{on}\,\,\mathrm{any}\\
\,\,\mathrm{connected}\,\,\mathrm{component}\,\,\mathrm{of}\,\,\Sigma
}}Q(G,G).
\end{equation}
By the definition of $\pcon$,
\begin{equation}\label{seven1}
\Sigma_{1}\subset\Sigma_{2}\qquad\Longrightarrow\qquad\pcon(\Sigma_{1})\leq\pcon(\Sigma_{2}).
\end{equation}

%Below we denote $\pcon=\pcon(\Sigma)$ using \eqref{seven0}.

\begin{lemma}[\embolden{Existence of Fundamental Tone}]\label{lemma33}
Assume $\pcon\colonequals\pcon(\Sigma)<\infty$.  Then there exists $F\in\mathcal{F}$ such that
\begin{equation}\label{eqstar}
Q(F,F)=\min_{\substack{G\in\mathcal{F}\colon\langle G,G\rangle=1,\\
G\,\,\mathrm{does}\,\,\mathrm{not}\,\,\mathrm{change}\,\,\mathrm{sign}\,\,\mathrm{on}\,\,\mathrm{any}\\
\,\,\mathrm{connected}\,\,\mathrm{component}\,\,\mathrm{of}\,\,\Sigma}}Q(G,G).
\end{equation}
If $F\in\mathcal{F}$ satisfies \eqref{eqstar}, then the following hold.  $F$ is an eigenfunction of $\L$ so that
$$\L F=\pcon F.$$
Moreover, $\forall\,1\leq i<j<k\leq m$, $\forall$ $x\in(\redS_{ij})\cap(\redS_{jk})\cap(\redS_{ki})$, the following holds at $x$.
$$\nabla_{\nu_{ij}}f_{ij}+q_{ij}f_{ij}=\nabla_{\nu_{jk}}f_{jk}+q_{jk}f_{jk}=\nabla_{\nu_{ij}}f_{ij}+q_{ij}f_{ij}.$$
\end{lemma}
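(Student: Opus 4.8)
The plan is to run the standard direct-method argument for the Rayleigh-type quotient defined by $Q$, adapted to the non-smooth, non-compact setting, and then derive the Euler--Lagrange equation and junction conditions from minimality. First I would take a minimizing sequence $F_u = (f_{ij,u})_{1\leq i<j\leq m} \in \mathcal{F}$ with $\langle F_u,F_u\rangle = 1$, each $F_u$ not changing sign on any connected component of $\Sigma$, and $Q(F_u,F_u)\to -\pcon$. Using Lemma \ref{lemma32.5} to rewrite $Q(F_u,F_u)$ in divergence-free form, together with the curvature/junction terms being controlled (here I would invoke the bounds on $\vnormt{A}^2$ and the $q_{ij}$ coefficients coming from Assumption \ref{as1} and the cutoff functions $\eta_u$ of Lemma \ref{lemma60}), I get a uniform bound $\sum_{ij}\int_{\Sigma_{ij}} \vnormt{\nabla f_{ij,u}}^2 \gamma_{\sdimn} \leq C$ for some $C$ depending on $\pcon$. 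This gives weak compactness in the weighted Sobolev space, so after passing to a subsequence $F_u \rightharpoonup F$ weakly, with $F\in\mathcal{F}$, $\langle F,F\rangle \leq 1$, and (by weak lower semicontinuity of the Dirichlet part and compactness of the lower-order/junction terms, using that the weighted embedding $W^{1,2}(\gamma)\hookrightarrow L^2(\gamma)$ is compact, analogous to the Ornstein--Uhlenbeck spectral gap) $Q(F,F) \leq \liminf Q(F_u,F_u) = -\pcon$.

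Next I would argue the constraint is not lost: if $\langle F,F\rangle < 1$ then rescaling $F/\langle F,F\rangle^{1/2}$ strictly decreases $Q(F,F)$ below $-\pcon$, contradicting the definition \eqref{seven0} of $\pcon$ as an infimum — but one has to make sure the rescaled $F$ is an admissible competitor in \eqref{seven0}, i.e.\ lies in $\mathcal{F}\cap C_0^\infty(\Sigma)$, so I would instead compare against the smooth-and-compactly-supported approximants $\phi_u F$ from Lemma \ref{lemma97}, whose $Q$-values converge to $Q(F,F)$; this shows $Q(F,F)\geq -\pcon$ as well, hence equality, and forces $\langle F,F\rangle = 1$. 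The sign-non-change of $F$ on each connected component is preserved under weak $L^2$ limits along a subsequence (pointwise a.e.\ convergence on a further subsequence), so $F$ is a genuine minimizer in the sense of \eqref{eqstar}.

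With $F$ a minimizer, I derive the eigenfunction equation by the usual first-variation argument: for any $G\in\mathcal{F}\cap C_0^\infty(\Sigma)$ with $\langle F,G\rangle = 0$, the function $t\mapsto Q(F+tG,F+tG)/\langle F+tG,F+tG\rangle$ has a critical point at $t=0$, giving $Q(F,G) = -\pcon\langle F,G\rangle$; combined with $Q(F,F) = -\pcon = -\pcon\langle F,F\rangle$ this yields $Q(F,G) = -\pcon\langle F,G\rangle$ for all such $G$. Using the integration-by-parts identity of Lemma \ref{lemma32.5} (in the form $Q(F,G) = -\langle \L F,G\rangle + \text{boundary terms}$, cf.\ Lemma \ref{lemma97}), and first testing against $G$ supported in the interior of each $\Sigma_{ij}$ away from $C$, I conclude $L_{ij}f_{ij} = \pcon f_{ij}$ on each $\Sigma_{ij}$, i.e.\ $\L F = \pcon F$. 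Then, testing against $G$ supported near a triple-junction point and integrating by parts, the bulk terms cancel and what remains is exactly a weighted integral over $(\redS_{ij})\cap(\redS_{jk})\cap(\redS_{ki})$ of $\big([\nabla_{\nu_{ij}}f_{ij}+q_{ij}f_{ij}]g_{ij} + [\nabla_{\nu_{jk}}f_{jk}+q_{jk}f_{jk}]g_{jk} + [\nabla_{\nu_{ki}}f_{ki}+q_{ki}f_{ki}]g_{ki}\big)$; since on the junction $g_{ij}+g_{jk}+g_{ki}=0$ is the only constraint, the arbitrariness of $(g_{ij},g_{jk},g_{ki})$ subject to that single linear relation forces the three bracketed quantities to be equal, which is precisely the claimed Neumann-type junction condition.

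The main obstacle I anticipate is the non-compactness of $\Sigma$: establishing that the weak limit $F$ still satisfies $\langle F,F\rangle = 1$ (no mass escapes to infinity) and that $Q$ is weakly lower semicontinuous requires the weighted compact embedding and careful handling of the $\vnormt{A}^2$ term, whose $L^\infty$ bound is not obvious a priori — here one leans on the a priori curvature estimates for Gaussian minimal bubbles (the Colding--Minicozzi-type bounds, proven elsewhere in the paper) to control $\int \vnormt{A}^2 f_{ij}^2 \gamma_{\sdimn}$ along the minimizing sequence. A secondary subtlety is regularity of the minimizer $F$ up to the singular set, needed to make the junction integration-by-parts rigorous; elliptic regularity for $L_{ij}$ in the interior handles smoothness of $f_{ij}$ on $\Sigma_{ij}$, and the $C^{1,\alpha}$ structure of $M_{\sdimn-2}$ from Assumption \ref{as1} is what makes the trace terms at $C$ well-defined.
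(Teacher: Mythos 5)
Your second half --- deriving $\L F=\pcon F$ and the junction condition by differentiating the Rayleigh quotient at $t=0$ and then localizing the test function $G$, first away from $C$ and then at a triple point where $(g_{ij},g_{jk},g_{ki})$ is arbitrary subject to $g_{ij}+g_{jk}+g_{ki}=0$ --- is essentially the paper's argument. The existence step is where your proposal has a genuine gap. You run the direct method and claim a uniform bound $\sum_{1\leq i<j\leq m}\int_{\Sigma_{ij}}\vnormt{\nabla f_{ij,u}}^{2}\gamma_{\sdimn}(x)dx\leq C$ along the minimizing sequence. But by Lemma \ref{lemma32.5} the form contains the negative term $-\int f^{2}(\vnormt{A}^{2}+1)\gamma_{\sdimn}$, and the only control on it is the stability inequality of Lemma \ref{lemma28.5}, whose gradient coefficient is exactly $1$: substituting it into $Q(F_{u},F_{u})$ makes the Dirichlet term cancel completely rather than dominate, so $Q(F_{u},F_{u})\to-\pcon$ together with $\langle F_{u},F_{u}\rangle=1$ yields no bound whatsoever on $\int\vnormt{\nabla f_{ij,u}}^{2}\gamma_{\sdimn}$. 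Coercivity fails at exactly the borderline, and without it there is no weak $W^{1,2}$ compactness to start from. The compact embedding of the weighted Sobolev space on $\Sigma$ into $L^{2}(\gamma_{\sdimn})$ that you invoke ``by analogy with the Ornstein--Uhlenbeck spectral gap'' is also not free: it is true on $\R^{\sdimn}$, but on a noncompact hypersurface with boundary it requires structure of $\Sigma$ (volume growth, properness) and is proved nowhere in the paper; likewise the junction term involves traces on $C$ weighted by $q_{ij}$ and is not obviously ``compact.'' A smaller slip: weak $L^{2}$ convergence does not give a.e.\ convergence of a subsequence, so the preservation of the sign condition needs the strong $L^{2}$ convergence that the (unproven) compact embedding would supply.

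The paper sidesteps all of this by never running the direct method on $\Sigma$ itself. It exhausts $\Sigma$ by compact pieces $\Sigma_{1}\subset\Sigma_{2}\subset\cdots$, takes on each $\Sigma_{k}$ a Dirichlet eigenfunction $F_{k}$ with $\L F_{k}=\pcon(\Sigma_{k})F_{k}$ not changing sign on components (existence is classical on compact pieces), normalizes $F_{k}(x)=1$ at a fixed interior point, and uses the monotonicity $\pcon(\Sigma_{k})\uparrow\pcon(\Sigma)<\infty$ from \eqref{seven1} together with the Harnack inequality and interior elliptic $C^{2,\sigma}$ estimates to get locally uniform bounds; Arzel\`{a}--Ascoli then produces a nonvanishing limit solving $\L F=\pcon F$ on all of $\Sigma$. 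This works at the level of the PDE rather than the quadratic form, so the borderline coercivity problem never arises. To salvage your variational route you would need either a strict improvement of the constant in Lemma \ref{lemma28.5} (so that $\int f^{2}\vnormt{A}^{2}\gamma_{\sdimn}$ is absorbed with room to spare) or an independent lower bound on the junction term along the minimizing sequence; neither is available in the paper.
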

\begin{proof}
First note that the set of functions $G$ specified in \eqref{eqstar} is nonempty by Lemma \ref{lemma55}.

Fix $x$ in the interior of $\Sigma$.  Let $\Sigma_{1}\subset\Sigma_{2}\subset\ldots$ be a sequence of compact $C^{\infty}$ hypersurfaces (with boundary) such that $\cup_{k=1}^{\infty}\Sigma_{k}=\Sigma$.  For each $k\geq1$, let $F_{k}$ be a Dirichlet eigenfunction of $\L$ on $\Sigma_{k}$ such that $\L F_{k}=\pcon(\Sigma_{k})F_{k}$, and such that $F_{k}$ does not change sign on any connected component of $\Sigma$.  By multiplying by a constant, we may assume $F_{k}(x)=1$ for all $k\geq1$.  Since $\pcon(\Sigma_{k})$ increases to $\pcon(\Sigma)<\infty$ as $k\to\infty$ by \eqref{seven0}, the Harnack inequality implies that there exists $c=c(\Sigma_{k},\pcon(\Sigma))$ such that $1\leq\sup_{y\in B}F_{k}(y)\leq c\inf_{y\in B}F_{k}(y)\leq c$ for some neighborhood $B$ of $x$.  Elliptic theory then gives uniform $C^{2,\sigma}$ bounds for the functions $F_{1},F_{2},\ldots$ on each compact subset of $\Sigma$.  So, by Arzel\`{a}-Ascoli there exists a uniformly convergent subsequence of $F_{1},F_{2},\ldots$ which converges to a solution $LF=\pcon(\Sigma)F$ on $\Sigma$ with $F(x)=1$ such that $F$ does not change sign on any connected component of $\Sigma$.  The Harnack inequality then implies $F$ is nonzero on any connected component of $\Sigma$.

Let $G\in\mathcal{F}$.  For any $t\in\R$, define
$$c(t)\colonequals\frac{Q(F+tG,F+tG)}{\langle F+tG,F+tG\rangle}-Q(F,F).$$
By definition of $F$, we have $c(t)\geq0$ $\forall$ $t\in\R$.  Therefore, $c'(0)=0$.  By Lemma \ref{lemma32.5}, $Q(F,G)=Q(G,F)$, so
$$c(t)=\frac{Q(F,F)+2tQ(F,G)+t^{2}Q(G,G)}{\langle F,F\rangle+2t\langle F,G\rangle+t^{2}\langle G,G\rangle}-Q(F,F),$$
$$0=c'(0)=\frac{\langle F,F\rangle Q(F,Q)-Q(F,F)\langle F,G\rangle}{\langle F,F\rangle^{2}}.$$
Therefore, for any $G\in\mathcal{F}$,
$$Q(F,G)=\frac{Q(F,F)}{\langle F,F\rangle}\langle F,G\rangle.$$
Fix $1\leq i<j\leq m$.  Choosing $G$ smooth and localized away from $C$ then implies that $\L F=-\frac{Q(F,F)}{\langle F,F\rangle}F\equalscolon\pcon F$ on $\Sigma_{ij}$ for every $1\leq i<j\leq m$, away from their boundaries.  Fix $1\leq i<j<k\leq m$.  Choose the vector field $X$ (where $g_{pq}\colonequals\langle X,N_{pq}\rangle$ for all $1\leq p<q\leq m$) now such that $g_{ij}=-g_{jk}=1$ and $g_{ki}=0$ at some $x\in (\redS_{ij})\cap (\redS_{jk})\cap (\redS_{ki})$, and such that $X$ is supported in a neighborhood of $x$.  Then the definition of $Q(F,G)$ implies that $\nabla_{\nu_{ij}}f_{ij}+q_{ij}f_{ij}=\nabla_{\nu_{jk}}f_{jk}+q_{jk}f_{jk}$.  (This argument is valid as long as the sign of $f_{ij},f_{jk},f_{ki}$ are not all the same at $x$.  It cannot occur that all three of these numbers have the same sign, since they must sum to zero at $x$, and by the definition of $F$, these three functions cannot all have the same sign in a neighborhood of $x$, by the limiting definition of $F$.)

\end{proof}

\begin{lemma}\label{lemma23}
Let $\Omega_{1},\ldots,\Omega_{m}$ minimize Problem \ref{prob1}.  Then $\pcon\geq1$.
\end{lemma}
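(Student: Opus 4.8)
The plan is to argue by contradiction, using the fundamental tone produced by Lemma \ref{lemma33}.

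Assume $\pcon := \pcon(\Sigma) < 1$; since the curvature bounds give $\pcon < \infty$, Lemma \ref{lemma33} produces $F = (f_{ij})_{1\le i<j\le m} \in \mathcal{F}$ with $\langle F,F\rangle = 1$, not changing sign on — and (by the Harnack inequality in that proof) nowhere zero on — each connected component of $\Sigma$, with $\L F = \pcon F$ and satisfying the junction identity \eqref{four75}. By Lemma \ref{lemma97}, $Q(F,G) = -\langle\L F,G\rangle = -\pcon\langle F,G\rangle$ for every $G \in \mathcal{F}$. The first step is to observe that $F$ is orthogonal to every infinitesimal translation: for $v \in \R^{\adimn}$, the tuple $F_v := (\langle v,N_{ij}\rangle)_{1\le i<j\le m}$ satisfies $\L F_v = F_v$ (Lemma \ref{lemma45} together with Lemma \ref{lemma24}) and \eqref{four75} (Remark \ref{rk30}), so Lemma \ref{lemma97} gives $Q(F_v,G) = -\langle F_v,G\rangle$ for all $G\in\mathcal{F}$. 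Comparing $Q(F,F_v) = -\pcon\langle F,F_v\rangle$ with $Q(F_v,F) = -\langle F_v,F\rangle$ and using the symmetry $Q(F,F_v) = Q(F_v,F)$ (Lemma \ref{lemma32.5}) forces $(\pcon-1)\langle F,F_v\rangle = 0$, hence $\langle F,F_v\rangle = 0$ for all $v$, i.e. $\sum_{1\le i<j\le m}\int_{\Sigma_{ij}}f_{ij}N_{ij}\,\gamma_{\sdimn}(x)\,dx = 0$ in $\R^{\adimn}$.

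The second step converts $F$ into a volume-preserving variation contradicting minimality. Since $F$ satisfies \eqref{eight1}, run the Implicit Function Theorem construction of Lemma \ref{lemma27} on $\epsilon\phi_u F + \sum_{ij\in T} t_{ij} g^{(ij)}$, where $\epsilon>0$ is small, $\phi_u$ is a radial cutoff $\equiv 1$ on $B(0,u)$, $T$ is a spanning tree of the (connected) adjacency graph of the $\Omega_i$, and each corrector $g^{(ij)} = b_1^{(ij)} - c_{ij}b_2^{(ij)}$ is supported in a compact part of the interior of $\Sigma_{ij}$, with $b_1^{(ij)},b_2^{(ij)}\ge 0$ disjointly supported and $c_{ij}>0$ chosen so that $\langle F,g^{(ij)}\rangle = 0$ (possible because $f_{ij}$ has constant sign and is nowhere zero on $\Sigma_{ij}$), the bumps taken so wide — their cut-off transitions placed where the Gaussian mass is negligible — that $Q(g^{(ij)},g^{(ij)}) < 0$ while the induced volume derivative $\propto u_i - u_j$ stays nonzero. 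These derivatives span $\{w\in\R^m:\sum_i w_i=0\}$, so this yields a compactly supported, genuinely volume-preserving variation whose normal speed at $s=0$ on $\Sigma_{ij}$ is $\epsilon\phi_u f_{ij} + \sum_{kl\in T} t_{kl}\,g^{(kl)}_{ij}$ with $t_{kl}\to\epsilon\tau_{kl}$ as $u\to\infty$ for fixed $\tau_{kl}$. Because $F$ satisfies \eqref{four75} and the $g^{(ij)}$ vanish near the junctions, the boundary term in Lemma \ref{lemma28} degenerates in the limit to annular junction integrals of $\phi_u(\nabla_{\nu_{ij}}\phi_u)f_{ij}^2$ over $C\cap\{u<\vnorm{x}<u+1\}$, which tend to $0$ by a trace inequality since $F\in\mathcal{F}$. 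Passing to the limit in Lemmas \ref{lemma28} and \ref{lemma97}, and using $Q(F,g^{(ij)}) = -\pcon\langle F,g^{(ij)}\rangle = 0$ together with the disjointness of the supports of the $g^{(ij)}$,
$$\lim_{u\to\infty}\frac{d^2}{ds^2}\Big|_{s=0}\sum_{1\le i<j\le m}\int_{\Sigma_{ij}^{(s)}}\gamma_{\sdimn}(x)\,dx = \epsilon^2\Big(-\pcon + \sum_{ij\in T}\tau_{ij}^2\, Q(g^{(ij)},g^{(ij)})\Big) < 0,$$
so for $u$ large a volume-preserving variation of $\Omega_1,\dots,\Omega_m$ has negative second variation, contradicting minimality. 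Hence $\pcon\ge 1$.

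I expect the genuine difficulty to be concentrated in the second step: producing correctors that simultaneously span the volume directions, are $\langle\cdot,\cdot\rangle$-orthogonal to $F$, are supported away from the junctions (so that the competitor inherits \eqref{four75}), and have negative quadratic form, while keeping all the non-compactness errors introduced by $\phi_u$ (the commutator and the annular junction terms) under control in the limit. Step 1 is what makes the scheme run, since it removes the translational part of $F$ and guarantees that the volume correction leaves $Q(F,\cdot)$ untouched.
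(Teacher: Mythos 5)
Your proposal goes in a genuinely different direction from the paper, and the direction itself is the problem. The inequality $\pcon\geq1$ is an \emph{existence} statement: by \eqref{seven0} one must exhibit admissible, non-sign-changing test functions $G$ with $Q(G,G)\leq-\langle G,G\rangle$. Its negation, $\pcon<1$, is a \emph{lower} bound on $Q$ over such test functions, and it points in the same direction as minimality (which gives $Q\geq0$ on volume-preserving variations); there is no tension between the two to exploit. Symptomatically, your hypothesis $\pcon<1$ is never used: your final display only needs $-\pcon+\sum_{ij\in T}\tau_{ij}^{2}Q(g^{(ij)},g^{(ij)})<0$, which (granting your claims) holds whenever $\pcon>0$. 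Taken at face value, your argument would therefore prove that every minimizer has $\pcon\leq0$ --- contradicting the lemma itself and the paper's later arguments, which rely on $\pcon\geq1$ and treat $\pcon>1$. So a step must fail, and it is the correctors. You need $g^{(ij)}$ compactly supported in the interior of $\Sigma_{ij}$, away from the junction set (so that \eqref{four75} survives for the competitor), with $Q(g^{(ij)},g^{(ij)})<0$. This is impossible in general: on the conjectured minimizer each $\Sigma_{ij}$ is a half-hyperplane with $A\equiv0$, and the Dirichlet--Poincar\'{e} inequality for the Ornstein--Uhlenbeck operator on a Gaussian half-space (first Dirichlet eigenfunction $x\mapsto\langle x,w\rangle$ with $w$ normal to $\redS_{ij}$ inside $\Sigma_{ij}$, eigenvalue $1$) gives $\int\vnormt{\nabla g}^{2}\gamma_{\sdimn}\geq\int g^{2}\gamma_{\sdimn}$ for every $g$ vanishing on $\redS_{ij}$, i.e.\ $Q(g,g)\geq0$. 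Widening the bumps does not help, because they must still vanish in a neighborhood of the junction, which has codimension one in $\Sigma$ and hence positive capacity: the transition layer's Dirichlet energy cannot be made small relative to $\int g^{2}\gamma_{\sdimn}$. Separately, even granting the correctors, the case $\pcon\leq0$ with $F$ already volume-preserving (all $\tau_{ij}=0$) yields second variation $-\epsilon^{2}\pcon\geq0$ and no contradiction; and the finiteness $\pcon<\infty$ needed to invoke Lemma \ref{lemma33} is not supplied by the curvature bounds, which are stated \emph{in terms of} $\pcon$ rather than bounding it.

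The paper's proof is direct and much simpler. By Lemma \ref{lemma55} there are locally constant admissible functions $F_{1},\ldots,F_{m-1}$ (coming from the solution space of the multiplier compatibility relations in \eqref{nine5.3}), each a nonzero constant on every $\Sigma_{ij}$, normalized so that $\sum_{p}F_{p}^{2}\equiv1$ on $\Sigma$. Since $\nabla F_{p}=0$, the bulk term of $Q(F_{p},\phi F_{p})$ in Lemma \ref{lemma32.5} is exactly $-\sum_{i<j}\int_{\Sigma_{ij}}\phi(\vnormt{A}^{2}+1)F_{p}^{2}\gamma_{\sdimn}$, while summing the junction terms over $p$ gives zero because $q_{ij}+q_{jk}+q_{ki}=0$ (Remark \ref{rk5}) and $\sum_{p}F_{p}^{2}=1$. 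Hence some $p$ has nonpositive junction contribution, so $Q(F_{p},F_{p})\leq-\langle F_{p},F_{p}\rangle$ after letting $\phi\uparrow1$, and since $F_{p}$ is nonvanishing and does not change sign on any component, \eqref{seven0} yields $\pcon\geq1$. Your Step 1 (orthogonality of the fundamental tone to the translation eigenfunctions when $\pcon\neq1$) is correct as far as it goes, but it plays no role in this lemma; the load-bearing input is the existence of the locally constant test functions and the identity $q_{ij}+q_{jk}+q_{ki}=0$, neither of which appears in your argument.
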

\begin{proof}
By Lemma \ref{lemma55}, there exist a linearly independent set of functions $F_{1},\ldots,F_{m-1}\colon\Sigma\to\R$ such that for any $1\leq i<j\leq m$ and for any $1\leq p\leq m-1$, $F_{p}$ is a nonzero constant on $\Sigma_{ij}$, and also
$$\sum_{p=1}^{m-1}(F_{p}(x))^{2}=1,\qquad\forall\,x\in\Sigma.$$
Fix $1\leq p\leq m-1$.  Let $\phi\in C_{0}^{\infty}(\Sigma)$.  Recall from Definition \ref{def2} that $Q(F_{p},\phi F_{p})$ is the sum of two terms.  Consider the second such term.  If we sum that term over all $1\leq p\leq m-1$, we get zero, since for any $1\leq i<j<k\leq m$ and for any $x\in(\redS_{ij})\cap(\redS_{jk})\cap(\redS_{ki})$, we have $q_{ij}(x)+q_{jk}(x)+q_{ki}(x)=0$ by Remark \ref{rk5}.  Therefore, there must exist some $1\leq p\leq m-1$ such that the second term of $Q(F_{p},\phi F_{p})$ is nonpositive.  That is, there exists $1\leq p\leq m-1$ such that
$$Q(F_{p},\phi F_{p})\leq-\sum_{1\leq i<j\leq m}\int_{\Sigma_{ij}}\phi (\vnormt{A}^{2}+1)(F_{p}(x))^{2}\gamma_{\sdimn}(x)dx.$$
(Since $F_{p}$ is constant, the $\nabla F_{p}$ term is zero in $Q(F_{p},\phi F_{p})$.)  But then, letting $\phi$ increase monotonically to $1$,
$$
\frac{Q(F_{p},F_{p})}{\langle F_{p},F_{p}\rangle}
\leq\frac{-\sum_{1\leq i<j\leq m}\int_{\Sigma_{ij}}(\vnormt{A}^{2}+1)(F_{p}(x))^{2}\gamma_{\sdimn}(x)dx}
{\sum_{1\leq i<j\leq m}\int_{\Sigma_{ij}}(F_{p}(x))^{2}\gamma_{\sdimn}(x)dx}.
$$
Since $F_{p}$ is nonvanishing, we conclude that $\pcon\geq1$.
\end{proof}

A key step of the important Lemma \ref{rkorth} below is to bound the gradient of $F$ by $F$ itself, where $F$ is an eigenfunction of $\L$.  As in \cite{colding12a,zhu16}, such a bound can result from a bound on the gradient of the logarithm of $F$, together with the Cauchy-Schwarz inequality.  Unfortunately, the function $F$ from Lemma \ref{lemma33} might vanish on the boundary of $\Sigma$.  So, we have to careful when bounding the gradient of the logarithm of $F$.  The argument below is adapted from \cite[Lemma 9.15(2)]{colding12a}.

\begin{lemma}\label{lemma29}
Assume $\pcon\colonequals\pcon(\Sigma)<\infty$.  Suppose $F\in\mathcal{F}$, $LF=\pcon F$ and $F$ satisfies \eqref{four75}.  Let $\phi\in C_{0}^{\infty}(\Sigma)$ such that
$$\{x\in\Sigma\colon \phi(x)=0\}\supset\{x\in\Sigma\colon\exists\,\,1\leq i<j\leq m\,\,\mathrm{such}\,\,\mathrm{that}\,\, f_{ij}(x)=0\}.$$
Then
\begin{flalign*}
&\sum_{1\leq i<j\leq m}\int_{\Sigma_{ij}} \phi^{2}\Big(\vnormt{A}^{2}+\vnormt{\nabla \log \abs{f_{ij}}}^{2}\Big)\gamma_{\sdimn}(x)dx
+\int_{\redS_{ij}}\phi^{2}\nabla_{\nu_{ij}}\log \abs{f_{ij}}\gamma_{\sdimn}(x)dx\\
&\qquad\leq4\sum_{1\leq i<j\leq m}\int_{\Sigma_{ij}}\vnormt{\nabla\phi}^{2}+(\pcon-1)\phi^{2}\,\gamma_{\sdimn}(x)dx.
\end{flalign*}
\end{lemma}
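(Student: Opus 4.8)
## Proof Proposal for Lemma \ref{lemma29}

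The plan is to test the eigenfunction equation $L_{ij}f_{ij}=\pcon f_{ij}$ against a suitable multiple of $f_{ij}$ weighted by a cutoff, and to exploit the fact that for a solution of a Schr\"odinger-type equation $\L F = \pcon F$, one has good control on $\nabla \log\abs{f_{ij}}$ wherever $f_{ij}\neq 0$. First I would set $w_{ij}\colonequals \log\abs{f_{ij}}$ on the (open) set where $f_{ij}\neq 0$; since $\phi$ is supported where all the $f_{ij}$ are nonzero, the quantity $\phi^{2}\vnormt{\nabla w_{ij}}^{2}$ makes sense and is smooth on the support of $\phi$. The starting point is the pointwise identity, valid where $f_{ij}\neq 0$,
$$
\frac{L_{ij}f_{ij}}{f_{ij}} = \Delta w_{ij} + \vnormt{\nabla w_{ij}}^{2} - \langle x,\nabla w_{ij}\rangle + \vnormt{A}^{2} + 1,
$$
which upon using $L_{ij}f_{ij}=\pcon f_{ij}$ rearranges to
$$
\Delta w_{ij} - \langle x,\nabla w_{ij}\rangle = \pcon - 1 - \vnormt{A}^{2} - \vnormt{\nabla w_{ij}}^{2}.
$$
Multiplying by $\phi^{2}\gamma_{\sdimn}$, integrating over $\Sigma_{ij}$, and integrating by parts (using $\mathrm{div}_{\tau}(\gamma_{\sdimn}\nabla w_{ij}) = \gamma_{\sdimn}(\Delta w_{ij}-\langle x,\nabla w_{ij}\rangle)$ as in Lemma \ref{lemma32.5}) converts the left side to $-\int_{\Sigma_{ij}}\langle \nabla(\phi^{2}),\nabla w_{ij}\rangle\gamma_{\sdimn} + \int_{\redS_{ij}}\phi^{2}\langle \nabla w_{ij},\nu_{ij}\rangle\gamma_{\sdimn}$. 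This produces exactly the boundary term $\int_{\redS_{ij}}\phi^{2}\nabla_{\nu_{ij}}\log\abs{f_{ij}}\gamma_{\sdimn}$ appearing in the statement, with no need for the hypothesis \eqref{four75} at this stage — the integration by parts is purely on each $\Sigma_{ij}$.

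Collecting terms, for each $i<j$ I get
$$
\int_{\Sigma_{ij}}\phi^{2}\bigl(\vnormt{A}^{2}+\vnormt{\nabla w_{ij}}^{2}\bigr)\gamma_{\sdimn}
+ \int_{\redS_{ij}}\phi^{2}\nabla_{\nu_{ij}}w_{ij}\,\gamma_{\sdimn}
= \int_{\Sigma_{ij}}(\pcon-1)\phi^{2}\gamma_{\sdimn} + \int_{\Sigma_{ij}}\langle\nabla(\phi^{2}),\nabla w_{ij}\rangle\gamma_{\sdimn}.
$$
Now apply Cauchy--Schwarz and Young's inequality to the last term: $\abs{\langle\nabla(\phi^{2}),\nabla w_{ij}\rangle} = 2\phi\abs{\langle\nabla\phi,\nabla w_{ij}\rangle} \le 2\vnormt{\nabla\phi}\cdot\phi\vnormt{\nabla w_{ij}} \le \tfrac{1}{2}\phi^{2}\vnormt{\nabla w_{ij}}^{2} + 2\vnormt{\nabla\phi}^{2}$. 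Absorbing the $\tfrac12\phi^{2}\vnormt{\nabla w_{ij}}^{2}$ into the left-hand side and then multiplying through by $2$ gives
$$
\int_{\Sigma_{ij}}\phi^{2}\bigl(2\vnormt{A}^{2}+\vnormt{\nabla w_{ij}}^{2}\bigr)\gamma_{\sdimn}
+ 2\int_{\redS_{ij}}\phi^{2}\nabla_{\nu_{ij}}w_{ij}\,\gamma_{\sdimn}
\le 2(\pcon-1)\int_{\Sigma_{ij}}\phi^{2}\gamma_{\sdimn} + 4\int_{\Sigma_{ij}}\vnormt{\nabla\phi}^{2}\gamma_{\sdimn};
$$
this is slightly stronger than what is claimed for the $\vnormt{A}^{2}$ coefficient (one could also simply drop the extra $\vnormt{A}^{2}$), and summing over all $1\le i<j\le m$ yields the stated inequality, noting $4 \ge 2(\pcon-1)$ is not needed since the constant in front of $(\pcon-1)\phi^{2}$ in the claimed bound is $4$, which dominates the $2$ we obtained. (If a sharper constant is desired one keeps the $2$; to match the statement exactly one just uses the crude bound $2\le 4$.)

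The main obstacle I anticipate is purely one of justifying the integrations by parts and the finiteness of all integrals, since $\Sigma_{ij}$ is noncompact and $\phi$ is compactly supported only in $\Sigma$ (not necessarily in the interior) — one must check that $\phi^{2}\gamma_{\sdimn}\nabla w_{ij}$ is a legitimate compactly-supported-on-$\Sigma_{ij}$ vector field whose tangential divergence theorem picks up exactly the boundary integral over $\redS_{ij}$ and no spurious contribution from $\partial\Sigma_{ij}\setminus\redS_{ij}$ or from the singular set $C$. This is handled by the regularity Assumption \ref{as1}: the cutoff $\phi$ can be taken supported in $M_{\sdimn}\cup M_{\sdimn-1}\cup M_{\sdimn-2}$ (as in Lemma \ref{lemma60}), away from the higher-codimension strata, so the divergence theorem for hypersurfaces with $C^{\infty}$ boundary (Remark \ref{rk10}) applies on each piece. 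The other delicate point is that $w_{ij}=\log\abs{f_{ij}}$ blows up where $f_{ij}=0$, but the hypothesis that $\phi$ vanishes on $\{f_{ij}=0\}$, combined with elliptic estimates forcing $\abs{\nabla f_{ij}}$ to be bounded near such a zero set, ensures $\phi^{2}\vnormt{\nabla w_{ij}}^{2}$ extends to an integrable function — indeed $\phi$ can be chosen to vanish to sufficiently high order near the zero set. With these technical points dispatched, the computation above is routine.
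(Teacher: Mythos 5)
Your proposal is correct and follows essentially the same route as the paper's proof: compute $\mathcal{L}_{ij}\log\abs{f_{ij}}$ pointwise from the eigenfunction equation, integrate against $\phi^{2}\gamma_{\sdimn}$ with an integration by parts producing the boundary term over $\redS_{ij}$, and absorb the cross term via Young's inequality before summing over $i<j$ (the paper, adapting \cite[Lemma 9.15(2)]{colding12a}, does exactly this, and shares the same harmless constant slack you note). Your closing remarks on where $f_{ij}$ vanishes and on justifying the divergence theorem near the singular strata match the technical points the paper leaves implicit.
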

\begin{proof}
Fix $1\leq i<j\leq m$.  On the interior of $\Sigma_{ij}$, if $f_{ij}\neq0$, we have
\begin{flalign*}
\mathcal{L}_{ij}\log \abs{f_{ij}}
&\stackrel{\eqref{three4.3}\wedge\eqref{seven00p}}{=}\sum_{k=1}^{\sdimn}\nabla_{e_{k}}\left(\frac{\nabla_{e_{k}}f_{ij}}{f_{ij}}\right)-\frac{\langle x,\nabla f_{ij}\rangle}{f_{ij}}
=\sum_{k=1}^{\sdimn}\frac{-(\nabla_{e_{k}}f_{ij})^{2}}{f_{ij}^{2}}+\frac{\mathcal{L}_{ij}f_{ij}}{f_{ij}}\\
&=-\vnormt{\nabla\log \abs{f_{ij}}}^{2}+\frac{\mathcal{L}_{ij}f_{ij}}{f_{ij}}
\stackrel{\eqref{three4.5}}{=}-\vnormt{\nabla\log \abs{f_{ij}}}^{2}+\frac{\L_{ij}f_{ij} -\vnormt{A}^{2}f_{ij}-f_{ij}}{f_{ij}}\\
&=-\vnormt{\nabla\log \abs{f_{ij}}}^{2}+\frac{\pcon f_{ij} -\vnormt{A}^{2}f_{ij}-f_{ij}}{f_{ij}}\\
&=-\vnormt{\nabla\log \abs{f_{ij}}}^{2}+(\pcon-1)-\vnormt{A}^{2}.
\end{flalign*}

Let $\nu_{ij}$ denote the unit exterior normal vector to the boundary of $\Sigma_{ij}$.  By Lemma \ref{lemma32.5},
% div(f grad g gam)= fLg gam+<grad f, grad g>gam
\begin{flalign*}
&\int_{\Sigma_{ij}}\langle\nabla \phi^{2},\nabla \log \abs{f_{ij}}\rangle\gamma_{\sdimn}(x)dx\\
&\stackrel{\eqref{three4.3}\wedge\eqref{seven00p}}{=}-\int_{\Sigma_{ij}}\phi^{2}\mathcal{L}\log \abs{f_{ij}}\gamma_{\sdimn}(x)dx+\int_{\redS_{ij}}\phi^{2}\nabla_{\nu_{ij}}\log \abs{f_{ij}}\gamma_{\sdimn}(x)dx\\
&\quad=\int_{\Sigma_{ij}}\phi^{2}\Big(\vnormt{\nabla\log \abs{f_{ij}}}^{2}+(1-\pcon)+\vnormt{A}^{2}\Big)\gamma_{\sdimn}(x)dx
+\int_{\redS_{ij}}\phi^{2}\nabla_{\nu}\log \abs{f_{ij}}\gamma_{\sdimn}(x)dx.
\end{flalign*}
(Since $\phi$ is zero whenever $f_{ij}=0$ by assumption, all expressions above are well-defined.)  By the arithmetic mean geometric mean inequality, %2ab\leq (a^2+b^2)
$$\abs{\langle\nabla \phi^{2},\nabla \log \abs{f_{ij}}\rangle}
\leq2\vnormt{\nabla\phi}^{2}+\frac{1}{2}\phi^{2}\vnormt{\nabla \log \abs{f_{ij}}}^{2}.$$
So, combining the above and summing over $1\leq i<j\leq m$,
\begin{flalign*}
&\sum_{1\leq i<j\leq m}\int_{\Sigma_{ij}} \phi^{2}\Big(\vnormt{A}^{2}+\vnormt{\nabla \log \abs{f_{ij}}}^{2}\Big)\gamma_{\sdimn}(x)dx
+\int_{\redS_{ij}}\phi^{2}\nabla_{\nu_{ij}}\log \abs{f_{ij}}\gamma_{\sdimn}(x)dx\\
&\qquad\leq4\sum_{1\leq i<j\leq m}\int_{\Sigma_{ij}}\vnormt{\nabla\phi}^{2}+(\pcon-1)\phi^{2}\gamma_{\sdimn}(x)dx.
\end{flalign*}
\end{proof}

The following curvature bound is adapted from \cite[Lemma 5.1]{mcgonagle15}.

\begin{lemma}\label{lemma29b}
Let $\Omega_{1},\ldots,\Omega_{m}$ minimize Problem \ref{prob1}.  Then $\forall$ $\phi\in C_{0}^{\infty}(\Sigma)$,
$$
\sum_{1\leq i<j\leq m}\int_{\Sigma_{ij}}\phi^{2}\vnorm{A}^{2}\gamma_{\sdimn}(x)dx
\leq \sum_{1\leq i<j\leq m}\int_{\Sigma_{ij}}\Big((\pcon-1)\phi^{2}+\vnorm{\nabla\phi}^{2}\Big)\gamma_{\sdimn}(x)dx.
$$
\end{lemma}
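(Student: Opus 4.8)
The plan is to read the estimate off directly from the variational characterization \eqref{seven0} of $\pcon$, applied to $\phi$ rather than to an auxiliary eigenfunction (as in Lemma \ref{lemma33}); this is the multi-bubble analogue of the one-surface argument of \cite[Lemma 5.1]{mcgonagle15}. We may assume $\pcon<\infty$, since otherwise the right-hand side is $+\infty$ and there is nothing to prove.

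First I would reduce to the case $\phi\geq0$. Both sides of the asserted inequality, and also the quadratic form $Q$, depend on $\phi$ only through $\phi^{2}$, $\vnorm{\nabla\phi}^{2}$ and $\vnorm{A}^{2}$, so it suffices to prove the inequality for the nonnegative functions $\phi_{\epsilon}\colonequals\sqrt{\phi^{2}+\epsilon^{2}}-\epsilon\in C_{0}^{\infty}(\Sigma)$, $\epsilon>0$, and then let $\epsilon\to0^{+}$: indeed $\phi_{\epsilon}$ has the same support as $\phi$, $\phi_{\epsilon}^{2}\uparrow\phi^{2}$ as $\epsilon\downarrow0$, and $\vnorm{\nabla\phi_{\epsilon}}^{2}=\phi^{2}\vnorm{\nabla\phi}^{2}/(\phi^{2}+\epsilon^{2})\leq\vnorm{\nabla\phi}^{2}$ with $\vnorm{\nabla\phi_{\epsilon}}^{2}\to\vnorm{\nabla\phi}^{2}$ almost everywhere, so the estimate for $\phi$ follows from that for each $\phi_{\epsilon}$ by monotone convergence (for the $\vnorm{A}^{2}$ and $\phi^{2}$ integrals, using $\pcon\geq1$ from Lemma \ref{lemma23}) and dominated convergence (for the $\vnorm{\nabla\phi}^{2}$ integral).

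Next, suppose $\phi\geq0$ and that $\phi$ is supported in the regular stratum $M_{\sdimn}$ of Assumption \ref{as1}; then $\phi\in\mathcal{F}\cap C_{0}^{\infty}(\Sigma)$ (the triple-junction compatibility in $\mathcal{F}$ being vacuous) and $\phi$ does not change sign on any connected component of $\Sigma$, so \eqref{seven0} gives $-Q(\phi,\phi)\leq\pcon\langle\phi,\phi\rangle$. On the other hand, by the integration-by-parts computation of Lemma \ref{lemma32.5} the bulk part of $Q(\phi,\phi)$ is $\sum_{1\leq i<j\leq m}\int_{\Sigma_{ij}}\big(\vnorm{\nabla\phi}^{2}-\phi^{2}(\vnorm{A}^{2}+1)\big)\gamma_{\sdimn}(x)dx$, while its triple-junction part, $\sum_{1\leq i<j<k\leq m}\int_{\redS_{ij}\cap\redS_{jk}\cap\redS_{ki}}\phi^{2}(q_{ij}+q_{jk}+q_{ki})\gamma_{\sdimn}(x)dx$, vanishes since $q_{ij}+q_{jk}+q_{ki}=0$ on $C$ by Remark \ref{rk5}. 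Substituting
$$Q(\phi,\phi)=\sum_{1\leq i<j\leq m}\int_{\Sigma_{ij}}\big(\vnorm{\nabla\phi}^{2}-\phi^{2}(\vnorm{A}^{2}+1)\big)\gamma_{\sdimn}(x)dx,\qquad \langle\phi,\phi\rangle=\sum_{1\leq i<j\leq m}\int_{\Sigma_{ij}}\phi^{2}\gamma_{\sdimn}(x)dx$$
into $-Q(\phi,\phi)\leq\pcon\langle\phi,\phi\rangle$ and rearranging yields exactly the asserted inequality for such $\phi$.

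The remaining point, which I expect to be the most delicate, is to remove the restriction that $\phi$ be supported in $M_{\sdimn}$. Given an arbitrary $\phi\in C_{0}^{\infty}(\Sigma)$ with $\phi\geq0$, I would multiply $\phi$ by cutoffs vanishing in shrinking neighborhoods of the lower strata $M_{\sdimn-2}$ and $M_{\sdimn-3}$; since these have codimension at least two in $\Sigma$ (hence zero $W^{1,2}$-capacity), such cutoffs exist with arbitrarily small Dirichlet energy — this is the role of the cutoffs provided by Lemma \ref{lemma60}. No cutoff near the codimension-one stratum $M_{\sdimn-1}$ of triple edges is required: the three exterior conormals of the sheets meeting along $M_{\sdimn-1}$ sum to zero, so the corresponding boundary terms already cancel upon integrating by parts (which is why no $M_{\sdimn-1}$ term appears in $Q$ in Lemma \ref{lemma32.5}). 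Letting the cutoffs tend to $1$, and using that $\sum_{1\leq i<j\leq m}\int_{\Sigma_{ij}}\phi^{2}\vnorm{A}^{2}\gamma_{\sdimn}(x)dx<\infty$ a priori (by the regularity in Assumption \ref{as1}), then gives the inequality for all $\phi\in C_{0}^{\infty}(\Sigma)$.
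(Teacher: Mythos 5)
Your reduction to $\phi\geq0$ and your computation of $Q$ for $\phi$ supported in the regular stratum are fine, but the final step has a genuine gap, and it is exactly the point the paper's proof is designed to get around. The variational characterization \eqref{seven0} of $\pcon$ is an infimum over $G\in\mathcal{F}$, and membership in $\mathcal{F}$ (Definition \ref{def1}) requires $f_{ij}+f_{jk}+f_{ki}=0$ on the triple junctions. Your test function assigns the \emph{same} scalar $\phi$ to every $\Sigma_{ij}$, so on the junction set one has $f_{ij}+f_{jk}+f_{ki}=3\phi$, which is nonzero wherever $\phi$ is. Hence for $\phi$ not vanishing on $M_{\sdimn-1}$ the tuple $(\phi,\ldots,\phi)$ is inadmissible, and the inequality $-Q(\phi,\phi)\leq\pcon\langle\phi,\phi\rangle$ simply does not follow from \eqref{seven0}. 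The cancellation $q_{ij}+q_{jk}+q_{ki}=0$ from Remark \ref{rk5} makes the junction term in the \emph{formula} for $Q$ vanish, but it does not make the function admissible for the infimum defining $\pcon$; these are separate issues. Nor can you cut off near $M_{\sdimn-1}$: that stratum has codimension one in $\Sigma$, so it has positive $W^{1,2}$-capacity and the Dirichlet energy of a cutoff vanishing on its $\delta$-neighborhood grows like $\delta^{-1}$ rather than tending to zero; capacity arguments only dispose of $M_{\sdimn-2}\cup M_{\sdimn-3}$, which is precisely what Lemma \ref{lemma60} provides.

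The paper's proof avoids this by testing with $f_{ij}=\phi\,\alpha_{ij}$, where $(\alpha_{ij})$ is a constant solution of the compatibility relations $\alpha_{ij}+\alpha_{jk}+\alpha_{ki}=0$ (such solutions exist with all entries nonzero by Lemma \ref{lemma55}); then $\phi G\in\mathcal{F}$ for \emph{every} $\phi\in C_{0}^{\infty}(\Sigma)$, with no support restriction. Applying \eqref{seven0} and Lemma \ref{lemma32.5} yields the desired inequality with weights $\alpha_{ij}^{2}$ and with a junction term $\sum\int\phi^{2}(q_{ij}\alpha_{ij}^{2}+q_{jk}\alpha_{jk}^{2}+q_{ki}\alpha_{ki}^{2})$ still present; averaging over all permutations of the labels $\{1,\ldots,m\}$ (each permuted $\alpha$ is again an admissible constant solution) makes the weights equal and kills the junction term via $q_{ij}+q_{jk}+q_{ki}=0$. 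If you want to keep your structure, replace the test function $(\phi,\ldots,\phi)$ by this family and add the permutation-averaging step; your sign reduction then becomes unnecessary as well, since the admissibility and cancellation no longer depend on the support or sign of $\phi$.
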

\begin{proof}
Let $G\colonequals\{\alpha_{ij}\}_{1\leq i<j\leq m}$ be a solution to the system of middle equations of \eqref{nine5.3}.  Let $\phi\in C_{0}^{\infty}(\R^{\adimn})$.  By the definition \eqref{seven0} of $\pcon$,
$$-Q(\phi G,\phi G)\leq\pcon\langle \phi G,\phi G\rangle.$$
That is, by Lemma \ref{lemma32.5}
\begin{flalign*}
&\sum_{1\leq i<j\leq m}\int_{\Sigma_{ij}}\alpha_{ij}^{2}(-\vnorm{\nabla\phi}^{2}+\phi^{2}(\vnorm{A}^{2}+1))\gamma_{\sdimn}(x)dx\\
&\qquad\qquad
+\sum_{1\leq i<j<k\leq m}\int_{\redS_{ij}\cap\redS_{jk}\cap \redS_{ki}}
\phi^{2}\Big(q_{ij}\alpha_{ij}^{2}+q_{jk}\alpha_{jk}^{2}+q_{ki}\alpha_{ki}^{2}\Big)\gamma_{\sdimn}(x)\\
&\qquad\leq \pcon\sum_{1\leq i<j\leq m}\int_{\Sigma_{ij}}\phi^{2}\alpha_{ij}^{2}\gamma_{\sdimn}(x)dx.
\end{flalign*}
Summing these quantities over all permutations of $\{1,\ldots,m\}$, i.e. permuting $\{\alpha_{ij}\}_{1\leq i<j\leq m}$, the middle term vanishes by Remark \ref{rk5}, and we get
$$
\sum_{1\leq i<j\leq m}\int_{\Sigma_{ij}}(-\vnorm{\nabla\phi}^{2}+\phi^{2}(\vnorm{A}^{2}+1))\gamma_{\sdimn}(x)dx
\leq \pcon\sum_{1\leq i<j\leq m}\int_{\Sigma_{ij}}\phi^{2}\gamma_{\sdimn}(x)dx.
$$
Rearranging completes the proof.

\end{proof}

%\begin{itemize}
%%\item We say that $f\in L_{2}(\Sigma,\gamma_{\sdimn})$ if $\int_{\Sigma}\abs{f}^{2}\gamma_{\sdimn}(x)dx<\infty$.
%\item We say that $f\in W_{1,2}(\Sigma,\gamma_{\sdimn})$ if $\int_{\Sigma}(\abs{f}^{2}+\vnormt{\nabla f}^{2})\gamma_{\sdimn}(x)dx<\infty$.
%%\item We say that $f\in W_{2,2}(\Sigma,\gamma_{\sdimn})$ if $\int_{\Sigma}(\abs{f}^{2}+\vnormt{\nabla f}^{2}+\abs{\mathcal{L}f}^{2})\gamma_{\sdimn}(x)dx<\infty$.
%\end{itemize}

\begin{lemma}[{\cite[Lemma 6.2]{zhu16}}]\label{lemma28.5}
Let $\Omega_{1},\ldots,\Omega_{m}$ minimize Problem \ref{prob1}.  If $\int_{\Sigma}(\abs{\phi}^{2}+\vnormt{\nabla \phi}^{2})\gamma_{\sdimn}(x)dx<\infty$ and if $\phi$ is bounded, then

$$\int_{\Sigma}\phi^{2}\vnormt{A}^{2}\gamma_{\sdimn}(x)dx
\leq \int_{\Sigma}(\vnormt{\nabla\phi}^{2}+(\pcon-1)\phi^{2})\gamma_{\sdimn}(x)dx.$$
\end{lemma}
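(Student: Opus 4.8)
The plan is to extend the estimate of Lemma \ref{lemma29b}, which is stated only for $\phi\in C_{0}^{\infty}(\Sigma)$, to the class in the statement by cutting $\phi$ off near the singular set of $\Sigma$ and near infinity, applying Lemma \ref{lemma29b} to the cut-off functions, and passing to the limit; the boundedness of $\phi$ and the finiteness of $\int_{\Sigma}\phi^{2}\gamma_{\sdimn}$ will enter exactly to make the two resulting cross terms disappear. We may assume $\pcon\colonequals\pcon(\Sigma)<\infty$ (else there is nothing to prove), and, since the inequality depends on $\phi$ only through $\phi^{2}$ and $\vnorm{\nabla\phi}^{2}$ and $\vnorm{\nabla\abs{\phi}}=\vnorm{\nabla\phi}$ a.e., that $\phi\geq 0$.

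\textbf{Step 1 (compact support).} First I would show that the estimate of Lemma \ref{lemma29b} holds for every bounded $\phi\geq 0$ with $\int_{\Sigma}(\phi^{2}+\vnorm{\nabla\phi}^{2})\gamma_{\sdimn}(x)dx<\infty$ having compact support in $\R^{\adimn}$. Let $\eta_{u}$ be the cutoffs of Lemma \ref{lemma60}: $0\leq\eta_{u}\uparrow 1$ pointwise on $\Sigma$, supported in $M_{\sdimn}\cup M_{\sdimn-1}\cup M_{\sdimn-2}$, with $\int_{\Sigma}[(1-\eta_{u})^{2}+\vnorm{\nabla(1-\eta_{u})}^{2}]\gamma_{\sdimn}(x)dx\to 0$. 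Then $\phi\eta_{u}\to\phi$ in the weighted Sobolev norm $\big(\int_{\Sigma}(\abs{\cdot}^{2}+\vnorm{\nabla\cdot}^{2})\gamma_{\sdimn}(x)dx\big)^{1/2}$: indeed $\eta_{u}\nabla\phi\to\nabla\phi$ in $L^{2}(\gamma_{\sdimn})$ by dominated convergence, while
$$\int_{\Sigma}\vnorm{\nabla(\phi\eta_{u})-\eta_{u}\nabla\phi}^{2}\gamma_{\sdimn}(x)dx=\int_{\Sigma}\phi^{2}\vnorm{\nabla\eta_{u}}^{2}\gamma_{\sdimn}(x)dx\leq\vnorm{\phi}_{\infty}^{2}\int_{\Sigma}\vnorm{\nabla\eta_{u}}^{2}\gamma_{\sdimn}(x)dx\longrightarrow 0,$$
which is where boundedness of $\phi$ enters. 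Mollifying $\phi\eta_{u}$ along the smooth pieces $\Sigma_{ij}$ in a way adapted to the local structure of $\Sigma$ along its $Y_{1}$-type edges (so that the result belongs to $C_{0}^{\infty}(\Sigma)$), and choosing the mollification scales suitably, I obtain $\phi_{u}\in C_{0}^{\infty}(\Sigma)$ with $\phi_{u}\geq 0$, $\phi_{u}\to\phi$ in weighted $W^{1,2}$, and (along a subsequence) $\phi_{u}\to\phi$ pointwise a.e. Applying Lemma \ref{lemma29b} to $\phi_{u}$ and letting $u\to\infty$, the right-hand side converges to $\int_{\Sigma}(\vnorm{\nabla\phi}^{2}+(\pcon-1)\phi^{2})\gamma_{\sdimn}(x)dx$, and by Fatou's lemma (using $\vnorm{A}^{2}\geq 0$ and pointwise convergence) the left-hand side is $\geq\int_{\Sigma}\phi^{2}\vnorm{A}^{2}\gamma_{\sdimn}(x)dx$; in particular this integral is finite, which we did not assume.

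\textbf{Step 2 (the general case).} Fix $\psi_{r}\in C_{0}^{\infty}(\R^{\adimn})$ with $0\leq\psi_{r}\leq 1$, $\psi_{r}\equiv 1$ on $B(0,r)$, $\psi_{r}\equiv 0$ off $B(0,r+1)$, and $\vnorm{\nabla\psi_{r}}\leq 2$, and apply Step 1 to $\phi\psi_{r}$. Since $\nabla(\phi\psi_{r})=\psi_{r}\nabla\phi+\phi\nabla\psi_{r}$ and $\nabla\psi_{r}$ is supported in $\{r\leq\vnorm{x}\leq r+1\}$,
$$\int_{\Sigma}\phi^{2}\vnorm{\nabla\psi_{r}}^{2}\gamma_{\sdimn}(x)dx\leq 4\int_{\{r\leq\vnorm{x}\leq r+1\}\cap\Sigma}\phi^{2}\gamma_{\sdimn}(x)dx\longrightarrow 0\quad(r\to\infty),$$
using $\int_{\Sigma}\phi^{2}\gamma_{\sdimn}(x)dx<\infty$; this is where that hypothesis enters. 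By dominated convergence $\int_{\Sigma}\vnorm{\nabla(\phi\psi_{r})}^{2}\gamma_{\sdimn}(x)dx\to\int_{\Sigma}\vnorm{\nabla\phi}^{2}\gamma_{\sdimn}(x)dx$ and $\int_{\Sigma}(\phi\psi_{r})^{2}\gamma_{\sdimn}(x)dx\to\int_{\Sigma}\phi^{2}\gamma_{\sdimn}(x)dx$, so the right-hand side of the Step 1 inequality for $\phi\psi_{r}$ tends to $\int_{\Sigma}(\vnorm{\nabla\phi}^{2}+(\pcon-1)\phi^{2})\gamma_{\sdimn}(x)dx$; Fatou's lemma applied to the left-hand side (with $\phi\psi_{r}\to\phi$ pointwise) then yields $\int_{\Sigma}\phi^{2}\vnorm{A}^{2}\gamma_{\sdimn}(x)dx\leq\int_{\Sigma}(\vnorm{\nabla\phi}^{2}+(\pcon-1)\phi^{2})\gamma_{\sdimn}(x)dx$, i.e. the estimate of Lemma \ref{lemma29b}, now valid for every bounded $\phi$ of finite weighted $W^{1,2}$ energy.

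I expect two points to require care. The more routine one is the density claim in Step 1 — replacing the test functions of Lemma \ref{lemma29b} by bounded, compactly supported functions of finite weighted $W^{1,2}$ energy — since this is where the singularities of $\Sigma$ along the triple-junction strata $M_{\sdimn-1}$, $M_{\sdimn-2}$ and the residual set $M_{\sdimn-3}$ must be absorbed and where one must check that the mollified approximants still respect the junction constraint defining $C_{0}^{\infty}(\Sigma)$; this is precisely what Lemma \ref{lemma60}, hence Assumption \ref{as1} (valid by Lemma \ref{lemma52.6}), was set up to handle. The genuine obstacle, however, is that the estimate of Lemma \ref{lemma28.5} is marginally sharper than that of Lemma \ref{lemma29b} (its left side is $\phi^{2}(\vnorm{A}^{2}+1)$, not $\phi^{2}\vnorm{A}^{2}$, against the same right side), so the cutoff argument above falls short of it by the term $\int_{\Sigma}\phi^{2}\gamma_{\sdimn}$. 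To recover that extra term I would try the ground-state substitution $\phi=F\psi$ with $F$ the fundamental eigenfunction of $\L$ from Lemma \ref{lemma33} (so $\L F=\pcon F$), controlling the resulting $\nabla\log\abs{F}$ terms by the logarithmic gradient estimate of Lemma \ref{lemma29} and using $\pcon\geq1$ from Lemma \ref{lemma23}, carried out first for compactly supported $\psi$ away from the zero set of $F$ and then extended by the same exhaustion; this refinement is where I expect the real difficulty to lie.
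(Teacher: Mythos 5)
Your Steps 1 and 2 are exactly the paper's proof, which is stated in one line: apply Lemma \ref{lemma29b}, Lemma \ref{lemma60}, and Fatou's Lemma; your cutoff near the singular strata via the $\eta_{u}$ of Lemma \ref{lemma60} (where boundedness of $\phi$ absorbs the $\phi^{2}\vnorm{\nabla\eta_{u}}^{2}$ cross term) and your radial cutoff $\psi_{r}$ (where $\int_{\Sigma}\phi^{2}\gamma_{\sdimn}<\infty$ absorbs the $\phi^{2}\vnorm{\nabla\psi_{r}}^{2}$ term) are the intended implementation. The ``genuine obstacle'' you identify at the end is real but is a typo in the statement rather than a gap in your argument: Lemma \ref{lemma29b} (before the final ``rearranging'') gives $\int_{\Sigma}\phi^{2}(\vnorm{A}^{2}+1)\gamma_{\sdimn}\leq\int_{\Sigma}(\vnorm{\nabla\phi}^{2}+\pcon\,\phi^{2})\gamma_{\sdimn}$, equivalently $\int_{\Sigma}\phi^{2}\vnorm{A}^{2}\gamma_{\sdimn}\leq\int_{\Sigma}(\vnorm{\nabla\phi}^{2}+(\pcon-1)\phi^{2})\gamma_{\sdimn}$, and this weaker form is all that the paper's own proof yields and all that is used downstream (Lemma \ref{lemma61} only needs $\int_{\Sigma}\vnorm{A}^{2}\gamma_{\sdimn}<\infty$). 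So you should simply record the corrected constant rather than attempt the ground-state substitution you sketch; that refinement is not in the paper and is not needed. One small point in your favor worth keeping: your reduction to $\phi\geq0$ is actually necessary to legitimately invoke the definition \eqref{seven0} of $\pcon$ in Lemma \ref{lemma29b}, since the infimum there is restricted to functions not changing sign on connected components of $\Sigma$.
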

\begin{proof}
%By Lemma \ref{lemma52.6}, Assumption \ref{as1} holds, so the Hausdorff dimension of the boundary of $\Sigma$ is at most $\sdimn-2$.

Apply Lemma \ref{lemma29b}, Lemma \ref{lemma60} and Fatou's Lemma.
\end{proof}

A rearrangement argument implies the following decay for the Gaussian surface area of optimal sets far from the origin.

\begin{lemma}[{\cite[Lemma 4.3]{milman18a}}]\label{lemma28.8}
Let $\Omega_{1},\ldots,\Omega_{m}$ minimize Problem \ref{prob1}.  Then there exists $r_{m}>0$ so that, for all $r>r_{m}$,
$$\sum_{1\leq i<j\leq m}\gamma_{\sdimn}(\Sigma_{ij}\cap\{x\in\R^{\adimn}\colon\vnorm{x}>r\})
\leq 3m\gamma_{\sdimn}(\{x\in\R^{\adimn}\colon\vnorm{x}=r\}).$$
\end{lemma}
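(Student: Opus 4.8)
The plan is a comparison (``rearrangement'') argument: replace the minimizing cluster outside a large ball by a stratified radial configuration whose Gaussian surface area is easy to bound from above, and then invoke minimality.

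First I would fix $r>0$ with $\gamma_{\sdimn}(\Sigma\cap\{x\colon \vnorm{x}=r\})=0$, where $\Sigma\colonequals\cup_{1\le i<j\le m}\Sigma_{ij}$; since any minimizer has finite Gaussian perimeter (the ``slab'' partition $\Omega_\ell=\{t_{\ell-1}\le x_1<t_\ell\}$ is admissible and has Gaussian surface area $\sum_{\ell=1}^{m-1}e^{-t_\ell^2/2}<\infty$, so $\sum_{i<j}\gamma_{\sdimn}(\Sigma_{ij})<\infty$) and the spheres $\{\vnorm{x}=r\}$ are pairwise disjoint, this holds for all but countably many $r$. Put $w_i\colonequals\gamma_{\adimn}(\Omega_i\cap\{\vnorm{x}>r\})$, so $\sum_i w_i=\gamma_{\adimn}(\{\vnorm{x}>r\})>0$. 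Enumerate the indices with $w_i>0$ as $i_1,\dots,i_p$ and pick radii $r=\rho_0<\rho_1<\dots<\rho_{p-1}$, using continuity and monotonicity of $\rho\mapsto\gamma_{\adimn}(\{\rho_{\ell-1}\le\vnorm{x}<\rho\})$ and taking the final shell to be $\{\vnorm{x}\ge\rho_{p-1}\}$, so that the nested shells $U_{i_\ell}\colonequals\{\rho_{\ell-1}\le\vnorm{x}<\rho_\ell\}$ satisfy $\gamma_{\adimn}(U_{i_\ell})=w_{i_\ell}$; set $U_i=\emptyset$ for the remaining indices. The competitor
$$\Omega_i'\colonequals(\Omega_i\cap\overline{B(0,r)})\cup U_i,\qquad 1\le i\le m,$$
is then an admissible partition with $\gamma_{\adimn}(\Omega_i')=\gamma_{\adimn}(\Omega_i)$ for every $i$.

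Next I would estimate $\sum_{i<j}\gamma_{\sdimn}(\Sigma_{ij}')$. Inside the open ball $1_{\Omega_i'}=1_{\Omega_i}$, so $\Sigma_{ij}'\cap B(0,r)=\Sigma_{ij}\cap B(0,r)$ and these interfaces contribute exactly $\sum_{i<j}\gamma_{\sdimn}(\Sigma_{ij}\cap B(0,r))$. On the sphere $\{\vnorm{x}=r\}$ the exterior side is always the first nonempty shell $U_{i_1}$, so the competitor's boundary there is contained in $\{\vnorm{x}=r\}$ and contributes at most $\sigma(r)\colonequals\gamma_{\sdimn}(\{\vnorm{x}=r\})$. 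Outside $\overline{B(0,r)}$ the only interfaces are the $p-1\le m-1$ spheres $\{\vnorm{x}=\rho_\ell\}$, each counted once. Since $\sigma(\rho)=c_{\sdimn}\rho^{\sdimn}e^{-\rho^2/2}$ is non-increasing for $\rho\ge\sqrt{\sdimn}$, if $r\ge\sqrt{\sdimn}$ then $\sigma(\rho_\ell)\le\sigma(r)$ for every $\ell$, and altogether
$$\sum_{1\le i<j\le m}\gamma_{\sdimn}(\Sigma_{ij}')\le\sum_{1\le i<j\le m}\gamma_{\sdimn}(\Sigma_{ij}\cap B(0,r))+p\,\sigma(r)\le\sum_{1\le i<j\le m}\gamma_{\sdimn}(\Sigma_{ij}\cap B(0,r))+m\,\sigma(r).$$
Since $\gamma_{\sdimn}(\Sigma_{ij}\cap\{\vnorm{x}=r\})=0$ we may split $\gamma_{\sdimn}(\Sigma_{ij})=\gamma_{\sdimn}(\Sigma_{ij}\cap B(0,r))+\gamma_{\sdimn}(\Sigma_{ij}\cap\{\vnorm{x}>r\})$; comparing with minimality of $\Omega_1,\dots,\Omega_m$ and cancelling the common term gives $\sum_{i<j}\gamma_{\sdimn}(\Sigma_{ij}\cap\{\vnorm{x}>r\})\le m\,\sigma(r)\le 3m\,\sigma(r)$ for all admissible $r\ge\sqrt{\sdimn}$. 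Finally, to remove the ``all but countably many'' restriction, given $r>r_m\colonequals\sqrt{\sdimn}$ take admissible $r_k\downarrow r$: the left-hand side at $r_k$ increases to its value at $r$ (as $\{\vnorm{x}>r_k\}\uparrow\{\vnorm{x}>r\}$), while $\sigma(r_k)\le\sigma(r)$, so the inequality persists at $r$.

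I expect the only delicate point to be the measure-theoretic bookkeeping in the third paragraph, namely confirming that the reduced boundary of the glued competitor is, up to the $\gamma_{\sdimn}$-null sphere $\{\vnorm{x}=r\}$, exactly ``the minimizer's interfaces inside $B(0,r)$'' $\sqcup$ ``a subset of $\{\vnorm{x}=r\}$'' $\sqcup$ ``the $p-1$ shell spheres'', with no spurious pieces and with each interface counted once in the sum $\sum_{i<j}$. Everything else (existence of the matching radii, finiteness of the perimeter, monotonicity of $\sigma$, and the limiting step) is elementary. As a sanity check, the argument in fact yields the sharper bound $m\,\sigma(r)$; the stated constant $3m$ is far from tight, which is harmless since only a bound of the form $C_m\,\sigma(r)$ is needed downstream.
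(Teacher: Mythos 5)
Your proposal is correct and is precisely the ``rearrangement argument'' that the paper alludes to but does not write out (it simply cites \cite[Lemma 4.3]{milman18a}): truncate the minimizer at radius $r$, refill the exterior with nested spherical shells of matching Gaussian volumes, and compare surface areas using minimality and the monotonicity of $r\mapsto c_{\sdimn}r^{\sdimn}e^{-r^{2}/2}$ for $r\geq\sqrt{\sdimn}$. The bookkeeping of the competitor's reduced boundary and the passage from a.e.\ $r$ to all $r>r_{m}$ are handled correctly, and your bound $m\,\sigma(r)$ is even slightly sharper than the stated $3m\,\sigma(r)$.
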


The following Lemmas follow from Lemma \ref{lemma28.5}.

\begin{lemma}\label{lemma61}
Let $\Omega_{1},\ldots,\Omega_{m}$ minimize Problem \ref{prob1}.  Then
$$\sum_{1\leq i<j\leq m}\int_{\Sigma_{ij}}\vnormt{A}^{2}\gamma_{\sdimn}(x)dx<\infty.$$
Consequently, for any $v\in\R^{\adimn}$, by \eqref{three2},
$$\sum_{1\leq i<j\leq m}\int_{\Sigma_{ij}}\vnorm{\nabla\langle v,N\rangle}^{2}\gamma_{\sdimn}(x)dx<\infty$$
\end{lemma}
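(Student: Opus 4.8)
The plan is to derive both bounds from the curvature estimate of Lemma~\ref{lemma28.5} by inserting the constant test function $\phi\equiv1$. First I would record that
\[
\gamma_{\sdimn}(\Sigma)\colonequals\sum_{1\leq i<j\leq m}\int_{\Sigma_{ij}}\gamma_{\sdimn}(x)dx<\infty,
\]
which is simply the (finite) minimum value of Problem~\ref{prob1}: a minimizer exists by Lemma~\ref{lemma51p}, and the infimum is finite since the partition of Conjecture~\ref{conj0} is an admissible competitor built from finitely many affine pieces, each of finite Gaussian surface area (alternatively, combine the decay bound of Lemma~\ref{lemma28.8} with local finiteness of $\Sigma$ from Assumption~\ref{as1}). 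Then I would apply Lemma~\ref{lemma28.5} with $\phi\equiv1$: this $\phi$ is bounded and $\int_{\Sigma}(\phi^{2}+\vnormt{\nabla\phi}^{2})\gamma_{\sdimn}(x)dx=\gamma_{\sdimn}(\Sigma)<\infty$, so the hypotheses hold and
\[
\sum_{1\leq i<j\leq m}\int_{\Sigma_{ij}}\vnormt{A}^{2}\gamma_{\sdimn}(x)dx
\leq\int_{\Sigma}(\vnormt{A}^{2}+1)\gamma_{\sdimn}(x)dx
\leq(\pcon-1)\,\gamma_{\sdimn}(\Sigma)<\infty,
\]
using $\pcon<\infty$. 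This is the first assertion.

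For the second assertion, fix $v\in\R^{\adimn}$ and $1\leq i<j\leq m$, and let $e_{1},\ldots,e_{\sdimn}$ be a local orthonormal frame of $\Sigma_{ij}$. Since $v$ is constant, \eqref{three2} gives $\nabla_{e_{k}}\langle v,N\rangle=\langle v,\nabla_{e_{k}}N\rangle=-\sum_{l=1}^{\sdimn}a_{kl}\langle v,e_{l}\rangle$ for each $k$. Hence, by Cauchy--Schwarz and orthonormality of the frame,
\[
\vnormt{\nabla\langle v,N\rangle}^{2}=\sum_{k=1}^{\sdimn}\Big(\sum_{l=1}^{\sdimn}a_{kl}\langle v,e_{l}\rangle\Big)^{2}
\leq\vnormt{v}^{2}\sum_{k,l=1}^{\sdimn}a_{kl}^{2}=\vnormt{v}^{2}\vnormt{A}^{2}
\]
pointwise on $\Sigma_{ij}$. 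Integrating against $\gamma_{\sdimn}$ and summing over $1\leq i<j\leq m$, the second bound follows at once from the first.

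The only step that is not purely formal is checking the two finiteness facts fed into Lemma~\ref{lemma28.5}: that $\gamma_{\sdimn}(\Sigma)<\infty$, which is immediate from minimality as above, and — the genuine obstacle — that $\pcon=\pcon(\Sigma)<\infty$, since Lemma~\ref{lemma28.5} (and the estimate of Lemma~\ref{lemma29b} underlying it) is vacuous otherwise. Equivalently, one needs a uniform lower bound for the quadratic form $Q$ over sign-definite test functions; granting this (with Lemma~\ref{lemma23} already giving $\pcon\geq1$), everything else reduces to the short computations above.
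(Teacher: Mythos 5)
Your proof is exactly the paper's: the paper's entire proof of this lemma reads ``Use Lemma \ref{lemma28.5} and \eqref{three2}'', and you have filled in precisely those two steps (taking $\phi\equiv1$ in Lemma \ref{lemma28.5}, using finiteness of the minimal Gaussian perimeter, then bounding $\vnormt{\nabla\langle v,N\rangle}^{2}\leq\vnormt{v}^{2}\vnormt{A}^{2}$ pointwise via \eqref{three2}). The finiteness of $\pcon$ that you flag as the one genuine obstacle is indeed left implicit in the paper as well (which elsewhere even entertains the case $\pcon=\infty$), so your write-up is, if anything, more careful than the original.
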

\begin{proof}
Use Lemma \ref{lemma28.5} and \eqref{three2}.
\end{proof}

\section{Orthogonality of Eigenfunctions}

The following is an adaptation of \cite[Lemma 9.44]{colding12a} and \cite[Proposition 6.11]{zhu16}.

\begin{lemma}[\embolden{Orthogonality of Eigenfunctions}]\label{rkorth}
Suppose Assumption \ref{as2} holds.  Let $\pcon\colonequals\pcon(\{x\in\Sigma\colon\vnorm{x}\leq t\})$.  Assume $1<\pcon<\infty$.  Let $F\in\mathcal{F}$ be the (Dirichlet) eigenfunction of $\{x\in\Sigma\colon\vnorm{x}\leq t\}$ that exists by Lemma \ref{lemma33}, so that $\L F=\pcon F$ and $F=0$ when $\vnorm{x}=t$ (so that $F$ has Dirichlet boundary conditions).  Fix $v\in\R^{\adimn}$.  Let $G\in\mathcal{F}$ so that $g_{ij}=\langle v,N_{ij}\rangle$ for all $1\leq i<j\leq m$.  Then for all $\epsilon>0$, there exists $t>0$ such that
$$\abs{\langle F,G\rangle}<\epsilon(\pcon-1)^{-1/2}\langle F,F\rangle^{1/2}\langle G,G\rangle^{1/2}.$$
Here $\epsilon$ does not depend on $\pcon$.
\end{lemma}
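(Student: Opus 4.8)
The plan is to adapt the Colding--Minicozzi orthogonality argument for eigenfunctions of the stability operator (compare \cite[Lemma 9.44]{colding12a} and \cite[Proposition 6.11]{zhu16}) to the cluster setting. Write $\delta\colonequals\pcon$. The mechanism is that an integration by parts expresses $(\delta-1)\langle F,G\rangle$ as boundary terms, which one then kills by localizing them onto an annulus escaping to infinity. Recall $\L F=\delta F$ by hypothesis, $\L G=G$ by Lemma \ref{lemma45} and Lemma \ref{lemma24}, and $L_{ij}=\mathcal{L}_{ij}+(\vnormt{A}^{2}+1)$. Fix a cutoff $\phi\in C_{0}^{\infty}(\Sigma)$ with $0\le\phi\le1$, $\phi\equiv1$ on $\{\vnorm{x}\le t-1\}$, $\phi\equiv0$ on $\{\vnorm{x}\ge t\}$, and $\vnormt{\nabla\phi}\le2$. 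Integrating by parts on each $\Sigma_{ij}$ as in Lemma \ref{lemma32.5} (the symmetric multiplier $\vnormt{A}^{2}+1$ cancels), one obtains
$$(\delta-1)\sum_{1\le i<j\le m}\int_{\Sigma_{ij}}\phi^{2}f_{ij}g_{ij}\gamma_{\sdimn}(x)\,dx=-2\sum_{1\le i<j\le m}\int_{\Sigma_{ij}}\phi\big(g_{ij}\langle\nabla\phi,\nabla f_{ij}\rangle-f_{ij}\langle\nabla\phi,\nabla g_{ij}\rangle\big)\gamma_{\sdimn}(x)\,dx+B,$$
where $B$ is a boundary integral over the triple-junction set $C$ (the outer sphere $\{\vnorm{x}=t\}$, where $F$ satisfies the Dirichlet condition, contributes nothing since $\phi$ vanishes there).

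The next step is to show $B=0$. Both $F$ --- as an eigenfunction of $\L$, via the variational characterization in the proof of Lemma \ref{lemma33}, using that $F$ does not change sign on connected components --- and $G$ (Remark \ref{rk30}) satisfy the co-normal matching condition \eqref{four75}; combined with $f_{ij}+f_{jk}+f_{ki}=g_{ij}+g_{jk}+g_{ki}=0$ (Definition \ref{def1}) and $q_{ij}+q_{jk}+q_{ki}=0$ (Remark \ref{rk5}), the integrand of $B$ cancels triple junction by triple junction exactly as in Lemma \ref{lemma36.5}, so $B=0$. The delicate point is that $F$ may vanish on part of $C$: since $F$ does not change sign on connected components, the strong maximum principle confines its zero set to $C$, and near that zero set the log-gradient estimate Lemma \ref{lemma29} --- needed also in the last step to control $\nabla F$ --- carries a boundary term on $C$ that is not a priori bounded. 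This is the only place Assumption \ref{as2} is used: it guarantees that the Gaussian $(\sdimn-1)$-area of $C\cap\{t-1\le\vnorm{x}\le t\}$ is as small as desired for $t$ large, so the contribution of a thin neighborhood of $C$ in that annulus is negligible; only smallness (not vanishing) is needed, hence the application with $\epsilon=1/2$ suffices.

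With $B=0$, the right-hand side of the displayed identity, and likewise the annular error $\langle F,G\rangle-\sum_{1\le i<j\le m}\int_{\Sigma_{ij}}\phi^{2}f_{ij}g_{ij}\gamma_{\sdimn}(x)\,dx$, are supported on the annulus $A\colonequals\{t-1\le\vnorm{x}\le t\}$ where $\nabla\phi$ lives, so by Cauchy--Schwarz they are controlled by a constant times
$$\Big(\sum_{1\le i<j\le m}\int_{\Sigma_{ij}\cap A}(\vnormt{\nabla f_{ij}}^{2}+f_{ij}^{2})\gamma_{\sdimn}(x)\,dx\Big)^{1/2}\Big(\sum_{1\le i<j\le m}\int_{\Sigma_{ij}\cap A}(\vnormt{\nabla g_{ij}}^{2}+g_{ij}^{2})\gamma_{\sdimn}(x)\,dx\Big)^{1/2}.$$
The $G$-factor over $A$ tends to $0$ as $t\to\infty$ because $\sum_{1\le i<j\le m}\int_{\Sigma_{ij}}(\vnormt{\nabla g_{ij}}^{2}+g_{ij}^{2})\gamma_{\sdimn}(x)\,dx<\infty$ (from $\abs{g_{ij}}\le\vnormt{v}$, Lemma \ref{lemma28.8}, and Lemma \ref{lemma61}), while the $F$-factor over $A$ stays bounded by a quantity depending only on $\delta$ and the geometry, using $Q(F,F)=-\delta\langle F,F\rangle$ with Lemma \ref{lemma32.5} and the curvature estimates Lemmas \ref{lemma29b} and \ref{lemma28.5}. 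Normalizing so that $\langle F,F\rangle=\langle G,G\rangle=1$ (the case $G\equiv0$ being trivial), and then choosing $t$ large in terms of $\delta$ and $\epsilon$, the right-hand side drops below $\epsilon(\delta-1)^{1/2}$; since $\langle G,G\rangle$ here is a fixed constant, this yields $(\delta-1)\abs{\langle F,G\rangle}\le\epsilon(\delta-1)^{1/2}\langle F,F\rangle^{1/2}\langle G,G\rangle^{1/2}$, i.e. the assertion, with $\epsilon$ independent of $\pcon$.

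The main obstacle is precisely the interaction of the last two steps at the boundary $C$: one needs $\nabla F$ controlled in the escaping annulus, but the only available gradient bound (Lemma \ref{lemma29}) degenerates on the zero set of $F$, which sits on $C$, and the boundary term it produces over $C$ must be absorbed --- this is what forces the polynomial-volume-growth hypothesis Assumption \ref{as2}, and in particular only its quantitative "small on a sequence of annuli" content is used. A subsidiary technical point is confirming that $F$ genuinely satisfies \eqref{four75} everywhere relevant on $C$, including at the degenerate lower-dimensional locus where $f_{ij},f_{jk},f_{ki}$ all vanish simultaneously.
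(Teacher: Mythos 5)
Your proposal reproduces the paper's skeleton up to the decisive estimate: the Wronskian identity $G\mL F-F\mL G=(\pcon-1)FG$, integration by parts against a cutoff supported in $\{\vnormt{x}\le t\}$, vanishing of the triple-junction boundary term via the co-normal matching condition \eqref{four75} for both $F$ and $G$, localization of the error onto an escaping annulus, and the use of Assumption \ref{as2} to neutralize a thin neighborhood of $C$ there. All of that matches the paper. The gap is in how you close the annular estimate. You apply Cauchy--Schwarz symmetrically, producing
$$\Big(\sum_{1\le i<j\le m}\int_{\Sigma_{ij}\cap A}(\vnormt{\nabla f_{ij}}^{2}+f_{ij}^{2})\gamma_{\sdimn}(x)dx\Big)^{1/2}\Big(\sum_{1\le i<j\le m}\int_{\Sigma_{ij}\cap A}(\vnormt{\nabla g_{ij}}^{2}+g_{ij}^{2})\gamma_{\sdimn}(x)dx\Big)^{1/2},$$
and you ask the $G$-factor to supply the smallness while the $F$-factor "stays bounded by a quantity depending only on $\pcon$ and the geometry." That boundedness claim --- which must be uniform in $t$, since $F=F_{t}$ changes with $t$ and is only normalized in $L^{2}$ --- is not delivered by the tools you cite. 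From $Q(F,F)=-\pcon\langle F,F\rangle$ and Lemma \ref{lemma32.5} you get $\sum\int\vnormt{\nabla f_{ij}}^{2}\gamma_{\sdimn}=\sum\int f_{ij}^{2}(\vnormt{A}^{2}+1)\gamma_{\sdimn}-\pcon\langle F,F\rangle-(\text{boundary }q\text{-terms})$, so controlling the gradient requires controlling $\sum\int f_{ij}^{2}\vnormt{A}^{2}\gamma_{\sdimn}$ in terms of $\langle F,F\rangle$; but Lemmas \ref{lemma29b} and \ref{lemma28.5} bound $\int\phi^{2}\vnormt{A}^{2}\gamma_{\sdimn}$ only for \emph{bounded} $\phi$, and the eigenfunction $F$ carries no a priori $L^{\infty}$ bound. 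Attempting to bootstrap by plugging the curvature estimate back into the identity is circular. So the $F$-factor in your Cauchy--Schwarz is not under control.

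This is exactly the difficulty the paper's proof is engineered to avoid, and it is why Lemma \ref{lemma29} is load-bearing rather than "subsidiary" as in your write-up. The paper never estimates $\int_{A}\vnormt{\nabla f_{ij}}^{2}\gamma_{\sdimn}$ at all: it uses the pointwise bounds $\absf{g_{ij}}\le\vnormt{v}$ and $\vnormt{\nabla g_{ij}}\le\vnormt{v}\vnormt{A}$, rewrites $\absf{g_{ij}}\vnormt{\nabla f_{ij}}=\absf{g_{ij}}\absf{f_{ij}}\vnormt{\nabla\log\absf{f_{ij}}}$, and applies Cauchy--Schwarz with $\absf{f_{ij}}$ as the common factor. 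This yields $\vnormt{v}\langle F,F\rangle^{1/2}$ (a \emph{global} $L^{2}$ norm, harmless) times $\big(\sum\int\phi^{2}(\vnormt{\nabla\log\absf{f_{ij}}}^{2}+\vnormt{A}^{2})\gamma_{\sdimn}\big)^{1/2}$ with $\phi$ supported in the annulus and vanishing near $C$; Lemma \ref{lemma29} bounds this last factor by $2\big(\sum\int\vnormt{\nabla\phi}^{2}+(\pcon-1)\phi^{2}\gamma_{\sdimn}\big)^{1/2}$, which is small because the Gaussian area of $\Sigma$ in the annulus decays (Lemma \ref{lemma28.8}) and the $C$-neighborhood contribution decays (Assumption \ref{as2}). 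In other words, the smallness must come from the $F$-side log-gradient term over the annulus, not from the $G$-tail. To repair your argument, replace your symmetric Cauchy--Schwarz with this asymmetric splitting; as written, the step does not go through.
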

\begin{remark}
The following informal argument would show that $\langle F,G\rangle=0$, though since $\Sigma$ is not compact and has boundary, this informal argument is not rigorous.  By the definition of $\mathcal{F}$ and our assumption, the second term in the definition of $Q(F,G)$ is zero, so that $Q(F,G)=-\langle\L F,G\rangle$, so by Lemma \ref{lemma32.5}
$$\pcon\langle F,G\rangle=\langle \L F,G\rangle=\langle F,\L G\rangle=\langle F,G\rangle.$$
Since $\pcon\neq1$, we have $\langle F,G\rangle=0$.
\end{remark}
\begin{proof}
By Lemma \ref{lemma33} and Lemma \ref{lemma29.9} (using that the vector field $X\colonequals v$ is constant for $G$), $\forall\,1\leq i<j<k\leq m$, and for all $x\in (\redS_{ij})\cap (\redS_{jk})\cap (\redS_{ki})$, we have
\begin{equation}\label{five7.3}
\begin{aligned}
\nabla_{\nu_{ij}}f_{ij}+q_{ij}f_{ij}&=\nabla_{\nu_{jk}}f_{jk}+q_{jk}f_{jk}=\nabla_{\nu_{ij}}f_{ij}+q_{ij}f_{ij}.\\
\nabla_{\nu_{ij}}g_{ij}+q_{ij}g_{ij}&=\nabla_{\nu_{jk}}g_{jk}+q_{jk}g_{jk}=\nabla_{\nu_{ij}}g_{ij}+q_{ij}g_{ij}.\\
\end{aligned}
\end{equation}
By \eqref{seven00}, \eqref{seven00p}, \eqref{one3.5n} and \eqref{one3.5o},
\begin{equation}\label{five6}
 G\mL F-F\mL G =G\L F-F\L G=(\pcon-1)FG.
 \end{equation}
 % div \phi X = phi divX + \nabla \phi X

Let $\psi\in C_{0}^{\infty}(\Sigma)$.  Then
$$
\mathrm{div}_{\tau}[(G\nabla F-F\nabla G)\gamma_{\sdimn}]
=(G\mL F- F\mL G)\gamma_{\sdimn}
\stackrel{\eqref{five6}}{=}(\pcon-1)FG\gamma_{\sdimn}.
$$
Integrating by parts gives
\begin{equation}\label{five7.6}
\begin{aligned}
(\pcon-1)\langle\psi F,G\rangle
&\stackrel{\eqref{ipdef}}{=}\sum_{1\leq i<j\leq m}\int_{\Sigma_{ij}}(\pcon-1)\psi f_{ij}g_{ij}\gamma_{\sdimn}(x)dx\\
&=\sum_{1\leq i<j\leq m}\int_{\Sigma_{ij}}\langle\nabla\psi, g_{ij}\nabla f_{ij}-f_{ij}\nabla g_{ij}\rangle\gamma_{\sdimn}(x)dx\\
&\qquad\qquad+\int_{\redS_{ij}}\psi(g_{ij}\nabla_{\nu_{ij}}f_{ij}-f_{ij}\nabla_{\nu_{ij}}g_{ij})\gamma_{\sdimn}(x)dx.
\end{aligned}
\end{equation}

Adding and subtracting the same terms, and using Definition \ref{def1} and \eqref{five7.3},
\begin{flalign*}
&\sum_{1\leq i<j\leq m}\int_{\redS_{ij}}\psi(g_{ij}\nabla_{\nu_{ij}}f_{ij}-f_{ij}\nabla_{\nu_{ij}}g_{ij})\gamma_{\sdimn}(x)dx\\% -qfg+qfg
&\qquad=\sum_{1\leq i<j\leq m}\int_{\redS_{ij}}\psi(g_{ij}[\nabla_{\nu_{ij}}f_{ij}+q_{ij}f_{ij}]
-f_{ij}[\nabla_{\nu_{ij}}g_{ij}+q_{ij}g_{ij}])\gamma_{\sdimn}(x)dx
\stackrel{\eqref{five7.3}}{=}0.
\end{flalign*}

We can then rewrite \eqref{five7.6} as
\begin{equation}\label{five7.6p}
(\pcon-1)\langle\psi F,G\rangle
=\sum_{1\leq i<j\leq m}\int_{\Sigma_{ij}}\langle\nabla\psi, g_{ij}\nabla f_{ij}-f_{ij}\nabla g_{ij}\rangle\gamma_{\sdimn}(x)dx.
\end{equation}

We split this integral into two pieces.  For any $1\leq i<j\leq m$ let $B_{ij}\subset\Sigma_{ij}$ and denote $B\colonequals \cup_{1\leq i<j\leq m}B_{ij}$.   Define $C$ as in Definition \ref{defnote}.  For any $r,\epsilon>0$, denote
$$B(0,r)\colonequals\{x\in\R^{\adimn}\colon\vnormt{x}\leq r\},$$
$$C_{\epsilon,r}\colonequals\{x\in\R^{\adimn}\colon\exists\,c\in C\,\,\mathrm{such}\,\,\mathrm{that}\,\, \vnorm{x-c}<\epsilon\,\,\mathrm{and}\,\,r<\vnorm{c}<r+1\}.$$
So, $C_{\epsilon,r}$ is the $\epsilon$-neighborhood of the part of $C$ lying in the annulus with radii $r$ and $r+1$.  By Assumption \ref{as2}, let $\eta\in C_{0}^{\infty}(\R^{\adimn})$ such that $\eta=0$ on $C_{\epsilon,r}$, $0\leq\eta\leq1$ everywhere, $\eta=1$ on $C_{2\epsilon,r}^{c}$.  We then write \eqref{five7.6p} as
\begin{equation}\label{five9}
\begin{aligned}
(\pcon-1)\langle\psi F,G\rangle
&=\sum_{1\leq i<j\leq m}\int_{\Sigma_{ij}}\eta\langle\nabla\psi, g_{ij}\nabla f_{ij}-f_{ij}\nabla g_{ij}\rangle\gamma_{\sdimn}(x)dx\\
&\qquad\sum_{1\leq i<j\leq m}\int_{\Sigma_{ij}}(1-\eta)\langle\nabla\psi, g_{ij}\nabla f_{ij}-f_{ij}\nabla g_{ij}\rangle\gamma_{\sdimn}(x)dx.
\end{aligned}
\end{equation}

We estimate the first term in \eqref{five9}.   Define $\phi\colonequals\eta\vnorm{\nabla\psi}$.  Using the Cauchy-Schwarz inequality on the first term of \eqref{five9},
\begin{flalign*}
&\abs{\sum_{1\leq i<j\leq m}\int_{\Sigma_{ij}}\eta\langle\nabla\psi, g_{ij}\nabla f_{ij}-f_{ij}\nabla g_{ij}\rangle\gamma_{\sdimn}(x)dx}\\
&\qquad\leq\sum_{1\leq i<j\leq m}\int_{\Sigma_{ij}}\phi \Big(\abs{g_{ij}}\vnorm{\nabla f_{ij}}+\abs{f_{ij}}\vnorm{\nabla g_{ij}}\Big)\gamma_{\sdimn}(x)dx\\
&\qquad\stackrel{\eqref{three2}}{\leq}\sum_{1\leq i<j\leq m}\int_{\Sigma_{ij}}\phi \Big(\vnorm{v}\abs{f_{ij}}\frac{\vnorm{\nabla f_{ij}}}{\abs{f_{ij}}}+\abs{f_{ij}}\vnorm{v}\vnormt{A}\Big)\gamma_{\sdimn}(x)dx\\
&\qquad\leq\vnorm{v}\langle F,F\rangle^{1/2}
\Big(\sum_{1\leq i<j\leq m}\int_{\Sigma_{ij}}\phi^{2}\Big(\vnorm{\nabla\log\abs{f_{ij}}}^{2}+\vnormt{A}^{2}\Big)\gamma_{\sdimn}(x)dx\Big)^{1/2}.
\end{flalign*}

Note that $\phi=0$ on $C$, so the boundary term in Lemma \ref{lemma29} is zero.  We therefore apply Lemma \ref{lemma29} to get
\begin{flalign*}
&\abs{\sum_{1\leq i<j\leq m}\int_{\Sigma_{ij}}\eta\langle\nabla\psi, g_{ij}\nabla f_{ij}-f_{ij}\nabla g_{ij}\rangle\gamma_{\sdimn}(x)dx}\\
&\qquad\leq2\vnorm{v}\langle F,F\rangle^{1/2}\Big(\sum_{1\leq i<j\leq m}\int_{\Sigma_{ij}}\vnorm{\nabla\phi}^{2}+(\pcon-1)\phi^{2}\gamma_{\sdimn}(x)dx\Big)^{1/2}.
\end{flalign*}

Note that $1-\eta$ is only nonzero on $C_{2\epsilon,r}$, so we can estimate \eqref{five9} as
\begin{flalign*}
\abs{(\pcon-1)\langle\psi F,G\rangle}
&\leq 2\vnorm{v}\langle F,F\rangle^{1/2}\Big(\sum_{1\leq i<j\leq m}\int_{\Sigma_{ij}}\vnorm{\nabla\phi}^{2}+(\pcon-1)\phi^{2}\gamma_{\sdimn}(x)dx\Big)^{1/2}\\
&\qquad+\sum_{1\leq i<j\leq m}\int_{C_{2\epsilon,r}\cap \Sigma_{ij}}\big(\vnorm{\nabla\psi} \vnorm{v}\vnorm{\nabla f_{ij}}+\abs{f_{ij}}\vnorm{v}\vnorm{A}\big)\gamma_{\sdimn}(x)dx.
\end{flalign*}
The latter term is negligible when $\epsilon>0$ is small, since the integrand does not depend on $\epsilon$, whereas the region of integration becomes smaller as $\epsilon\to0^{+}$.  The other integral is negligible by Assumption \ref{as2}, and then letting $r=t\to\infty$ completes the proof, since
%$$
%\abs{\langle\psi F,G\rangle}
%\leq 10(\pcon-1)^{-1/2}\vnorm{v}\langle F,F\rangle^{1/2}\Big(\gamma_{\sdimn-1}(C_{0,r})+\gamma_{\sdimn}(\Sigma\cap B(0,r+1)\setminus B(0,r-1))\Big)^{1/2}.
%$$
%The last quantity decays exponentially in $r$ as $r\to\infty$ by Lemma \ref{lemma28.8}.  The quantity $\gamma_{\sdimn-1}(C_{0,r})$ also decays exponentially in $r$ as $r\to\infty$ by Assumption \ref{as2}.
$1-\psi$ is only nonzero on $B(0,r)^{c}$, so
$$\abs{\langle F,G\rangle }
\leq \abs{\langle \psi F,G\rangle }+\abs{\langle F,G(1-\psi)\rangle }
\leq \abs{\langle \psi F,G\rangle }+\langle F,F\rangle^{1/2}\langle G,G1_{B(0,r)^{c}}\rangle^{1/2}.$$
And the last quantity goes to $0$ as $r\to\infty$ by the Dominated Convergence Theorem.

\end{proof}

\section{Dimension Reduction}\label{secred}

Below, we fill in the details to the argument sketched in the introductory Section \ref{intsec}.

\begin{proof}[Proof of Theorem \ref{thm0}]
For any $v\in\R^{\adimn}$, define
$$T(v)\colonequals\Big(\int_{\redb\Omega_{1}}\sum_{j\in\{1,\ldots,m\}\colon j\neq 1}\langle v,N_{1j}\rangle\gamma_{\sdimn}(x)dx,
\ldots,\int_{\redb\Omega_{m}}\sum_{j\in\{1,\ldots,m\}\colon j\neq m}\langle v,N_{mj}\rangle\gamma_{\sdimn}(x)dx\Big).$$
Then $T\colon\R^{\adimn}\to\R^{m}$ is linear.  By the rank-nullity theorem, the dimension of the kernel of $T$ plus the dimension of the image of $T$ is $\adimn$.  Since the sum of the indices of $T(v)$ is zero for any $v\in\R^{\adimn}$ (since $N_{ij}=-N_{ji}$ $\forall$ $1\leq i<j\leq m$ by Definition \ref{defnote}), the dimension $\ell$ of the image of $T$ is at most $m-1$.

Let $v$ in the kernel of $T$.  For any $1\leq i<j\leq m$, let $f_{ij}\colonequals\phi\langle v,N_{ij}\rangle$.  Let $X\colonequals \phi v$ be the chosen vector field.  Since $\Omega_{1},\ldots,\Omega_{m}$ minimize Problem \ref{prob1},
$$0\leq \frac{d^{2}}{ds^{2}}|_{s=0}\sum_{1\leq i<j\leq m}\int_{\Sigma_{ij}^{(s)}}\gamma_{\sdimn}(x)dx.$$
From Lemmas \ref{lemma28}, \ref{lemma29.9}, \ref{lemma97}, \ref{lemma27}, and then letting $\phi$ increase monotonically to $1$ (as in Lemma \ref{lemma97}),
$$0\leq\sum_{1\leq i<j\leq m}-\int_{\Sigma_{ij}}f_{ij}L_{ij}f_{ij}\gamma_{\sdimn}(x)dx.$$
(By Lemma \ref{lemma97} with $F=\{\langle v, N_{ij}\rangle\}_{1\leq i<j\leq m}$, the second term in \eqref{quaddef} for $Q(\phi F,\phi F)$ is zero in the limit as $\phi$ increases to $1$.)  By Remark \ref{rk30},
$$0\leq\sum_{1\leq i<j\leq m}-\int_{\Sigma_{ij}}f_{ij}^{2}\gamma_{\sdimn}(x)dx.$$
The last quantity must then be zero.  In summary, for any $v$ in the kernel of $T$, $\forall$ $1\leq i<j\leq m$, $f_{ij}(x)=\langle v,N_{ij}(x)\rangle=0$ for all $x\in\Sigma_{ij}$.  That is, $\exists$ $0\leq\ell\leq m-1$ as stated in the conclusion of Theorem \ref{thm0}.  It remains to characterize $\ell$ in terms of the Gaussian centers of mass of the sets $\Omega_{1},\ldots,\Omega_{m}$.

The image of $T$ is the span of
$$\Big\{\Big(\int_{\redb\Omega_{1}}\sum_{\substack{j\in\{1,\ldots,m\}\colon\\ j\neq 1}}\langle v,N_{1j}\rangle\gamma_{\sdimn}(x)dx,
\ldots,\int_{\redb\Omega_{m}}\sum_{\substack{j\in\{1,\ldots,m\}\colon\\ j\neq m}}\langle v,N_{mj}\rangle\gamma_{\sdimn}(x)dx\Big)\in\R^{m}\colon v\in\R^{\adimn}\Big\}.$$
Using the Divergence Theorem for $\R^{\adimn}$, if $v\in\R^{\adimn}$, then
$$
\int_{\redb\Omega_{1}}\sum_{j\in\{1,\ldots,m\}\colon j\neq 1}\langle v,N_{1j}\rangle\gamma_{\sdimn}(x)dx
=-\int_{\Omega_{1}}\langle v,x\rangle\gamma_{\sdimn}(x)dx,
$$
and similarly for $\Omega_{2},\ldots,\Omega_{m}$.  So, the image of $T$ is the span of
$$\Big\{\Big(\int_{\Omega_{1}}x\gamma_{\sdimn}(x)dx,\ldots,\int_{\Omega_{m}}x\gamma_{\sdimn}(x)dx\Big)^{t}v\in\R^{m}\colon v\in\R^{\adimn}\Big\}.$$
Here $(\int_{\Omega_{1}}x\gamma_{\sdimn}(x)dx,\ldots,\int_{\Omega_{m}}x\gamma_{\sdimn}(x)dx)^{t}$ denotes the matrix with $m$ rows, each of which is in $\R^{\adimn}$.
So, the dimension $\ell$ of the image of $T$ is equal to the dimension of the span of the vectors $\int_{\Omega_{1}}x\gamma_{\adimn}(x)dx,\ldots,\int_{\Omega_{m}}x\gamma_{\adimn}(x)dx$, as desired.
\end{proof}

\section{Proof of Structure Theorem Dichotomy}\label{secdi}

As above, $\Sigma\colonequals\cup_{1\leq i<j\leq m}\Sigma_{ij}$ and $\Sigma_{ij}\colonequals(\redb\Omega_{i})\cap(\redb\Omega_{j})$, $\forall$ $1\leq i<j\leq m$, and we define
$$
\pcon=\pcon(\Sigma)
\colonequals-\inf_{\substack{G\in \mathcal{F}\cap C_{0}^{\infty}(\Sigma)\colon\langle G,G\rangle=1,\\
G\,\,\mathrm{does}\,\,\mathrm{not}\,\,\mathrm{change}\,\,\mathrm{sign}\,\,\mathrm{on}\,\,\mathrm{any}\\
\,\,\mathrm{connected}\,\,\mathrm{component}\,\,\mathrm{of}\,\,\Sigma
}}Q(G,G).
$$

\begin{proof}[Proof of Theorem \ref{thm1}]
%Let $m\geq4$.
Existence of $\Omega_{1},\ldots,\Omega_{m}$ follows from Lemma \ref{lemma51p}.  Assumption \ref{as1} holds by Lemma \ref{lemma52.6}.  Assume that the second case of Theorem \ref{thm1} does not occur.  That is, assume that $\ell=m-1$ is the smallest possible $\ell$ such that the second case of Theorem \ref{thm1} holds (recalling that $\ell\leq m-1$ by Theorem \ref{thm0}).  If $\pcon=1$, it follows from Lemma \ref{lemma29b} that $\vnorm{A}=0$ on $\Sigma$.  So, either $\pcon>1$ or $\vnorm{A}=0$ on $\Sigma$ (since $\pcon\geq1$ by Lemma \ref{lemma23}).

Suppose $2\geq\pcon>1$.  Let $t>0$, let $\Sigma_{t}\colonequals\{x\in\Sigma\colon\vnorm{x}\leq t\}$, let $\pcon_{t}=\pcon(\Sigma_{t})$ and let $F=F_{t}$ from Lemma \ref{lemma33} so that $\L F=\pcon_{t} F$ and so that $F$ satisfies \eqref{four75}.  For $t$ sufficiently large, $\pcon_{t}>1$ by definition of $\pcon$.  If $F$ is volume-preserving, i.e. if \eqref{eight2} holds, then by Lemma \ref{lemma28}, we get a contradiction by Lemmas \ref{lemma29.9} and \ref{lemma97}.  Otherwise, fix $v\in\R^{\adimn}$ and let $G$ so that $g_{ij}\colonequals\langle v,N_{ij}\rangle$ for all $1\leq i<j\leq m$.  Since $\ell=m-1$, we can choose $v\neq0$ such that $F+G$ satisfies \eqref{eight2}.  By Remark \ref{rk30} and Lemma \ref{lemma29.9}, \eqref{four75} holds for $G$.  Then, by Lemma \ref{lemma97} we have
$$Q(F+G,\phi(F+G))=\langle -L(F+G),\phi(F+G)\rangle=-\langle\pcon_{t} F+G,\phi(F+G)\rangle.$$
Letting $\phi$ increase monotonically to $1$ (as in Lemma \ref{lemma97}), we then get
$$Q(F+G,F+G)
=-\langle\pcon_{t} F+G,F+G\rangle
=-\pcon_{t}\langle F,F\rangle-\langle G,G\rangle-(\pcon_{t}+1)\langle F,G\rangle<0.$$
To see that the last expression is negative, choose $\epsilon>0$ and $t$ large by Lemma \ref{rkorth} so that $\epsilon<1/3\leq 1/(\pcon_{t}+1)$, so that $\abs{(\pcon_{t}+1)\langle F,G\rangle}<\langle F,F\rangle^{1/2}\langle G,G\rangle^{1/2}$.  Then
$$Q(F+G,F+G)\leq (1-\pcon_{t})\langle F,F\rangle-(\langle F,F\rangle^{1/2}-\langle G,G\rangle^{1/2})^{2}<0.$$
So, we have contradicted Lemma \ref{lemma28} and minimality of $\Omega_{1},\ldots,\Omega_{m}$.

In the remaining case that $\pcon>2$ Lemma \ref{rkorth} is not needed.  Let $t>0$ so that $\pcon_{t}>2$.  Let $F$ be a Dirichlet eigenfunction such that $\L F=\pcon_{t} F$.  We then repeat the same computation, and use Lemma \ref{lemma32.5} to get
\begin{flalign*}
Q(F+G,F+G)
&= Q(F,F)+2Q(G,F)+Q(G,G)
=-\pcon_{t}\langle F,F\rangle-2\langle F,G\rangle-\langle G,G\rangle\\
&=-(\pcon_{t}-2)\langle F,F\rangle-\langle F+G,F+G\rangle<0.
\end{flalign*}
\end{proof}

\section{Proof of (Conditional) Double Bubble Problem}\label{secdoub}

\begin{figure}[h]
\centering
\def\svgwidth{.4\textwidth}
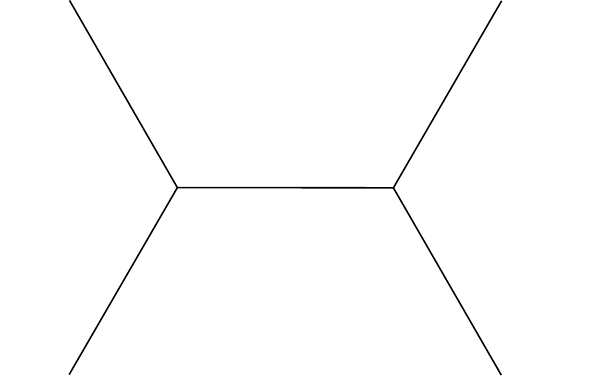
\caption{Proof of Corollary \ref{cor1}.}
\end{figure}

\begin{proof}[Proof of Corollary \ref{cor1}]
Let $m=3$.  Existence of $\Omega_{1},\Omega_{2},\Omega_{3}$ follows from Lemma \ref{lemma51p}.  Assumption \ref{as1} holds by Lemma \ref{lemma52.6}.  By Theorem \ref{thm0}, we may assume that $\Omega_{1},\Omega_{2},\Omega_{3}\subset\R^{2}$.  From Theorem \ref{thm1}, either all connected components of $\redb\Omega_{1},\redb\Omega_{2},\redb\Omega_{3}$ are flat lines, or we may assume that $\Omega_{1},\Omega_{2},\Omega_{3}\subset\R$.  In the second case, a rearrangement argument implies that $\Omega_{1},\Omega_{2},\Omega_{3}$ must be connected intervals, so we now consider the first case.

In the first case, we claim that $\Omega_{1},\Omega_{2},\Omega_{3}$ must have exactly three connected components.  We show this by contradiction.  Suppose $\Omega_{1},\Omega_{2},\Omega_{3}$ have more than three connected components.  By Assumption \ref{as1}, there must then exist two points $x,y\in\R^{2}$ such that (after relabeling the sets), $\Omega_{1}\cap\Omega_{2}$ contains the line segment $\ell_{0}$ between $x$ and $y$, and $\Omega_{3}$ intersects both $x$ and $y$.  Let $v\in\R^{2}$ be nonzero and parallel to $\ell_{0}$.  By Assumption \ref{as1} (which holds by Lemma \ref{lemma52.6}), the boundaries of $\Omega_{1},\Omega_{2},\Omega_{3}$ consist of line segments pointing in only three directions.  So, any line segment not parallel to $\ell_{0}$ is not parallel to $v$.  We can then put the edges of $\partial\Omega_{1}\cup\partial\Omega_{2}\cup\partial\Omega_{3}$ into at least two nonempty equivalent classes.  An edge $e$ is in the equivalence class labelled as $x$ if a sequence of adjacent edges can be constructed connecting $x$ to $e$ such that $v$ is not parallel to any edge in the sequence.  Similarly, an edge $e$ is in the equivalence class labelled as $y$ if a sequence of adjacent edges can be constructed connecting $y$ to $e$ such that $v$ is not parallel to any edge in the sequence.

We now claim we have linearly independent eigenfunctions of $\L$.  These correspond to (i) the vector field $X\colonequals v$, (ii) the constant vector field $X$ that is orthogonal to $v$, and (iii) to the function on $\Sigma$ such that $X=v$ everywhere, while $X=-v$ on edges in the equivalence class labelled $y$.  With these three eigenfunctions, we can form a nontrivial linear combination $F$ such that $Q(F,F)<0$ and $\int_{\partial\Omega_{i}}F\gamma_{\sdimn}(x)dx=0$ for all $i=1,2,3$, violating the minimality of $\Omega_{1},\Omega_{2},\Omega_{3}$.  Therefore,  $\Omega_{1},\Omega_{2},\Omega_{3}$ must have exactly three connected components.

In summary, $\Omega_{1},\Omega_{2},\Omega_{3}$ must have exactly three connected components and flat boundaries, so that either $\Omega_{1},\Omega_{2},\Omega_{3}$ are the sets described in Conjecture \ref{conj0}, or $\Omega_{1},\Omega_{2},\Omega_{3}$ consist of three parallel slabs.  Having reduced to only two cases, one can conclude by showing that the slabs do not minimize Problem \ref{prob1} for $m=3$ by either direct estimates, or the differentiation argument of \cite{milman18a}, which we recall below in the more general case $m=4$.
\end{proof}

\section{Proof of (Conditional) Triple Bubble Conjecture}\label{sectrip}

Below, we let $^{t}$ denote the transpose of a matrix or vector.  Also, all vectors are assumed to be column vectors.

\begin{proof}[Proof of Corollary \ref{cor2}]
Let $m=4$.  Existence of $\Omega_{1},\ldots,\Omega_{4}$ follows from Lemma \ref{lemma51p}.  Assumption \ref{as1} holds by Lemma \ref{lemma52.6}.  By Theorem \ref{thm0}, we may assume that $\Omega_{1},\ldots,\Omega_{4}\subset\R^{3}$.  From Theorem \ref{thm1}, either all connected components of $\redb\Omega_{1},\redb\Omega_{2},\redb\Omega_{3}$ are contained in flat planes, or we may assume that $\Omega_{1},\ldots,\Omega_{4}\subset\R^{2}$.  If $\Omega_{1},\ldots,\Omega_{4}\subset\R$, then a rearrangement argument implies that $\Omega_{1},\ldots,\Omega_{4}$ are connected intervals, so we ignore this case for now.

In the first case, we can choose three nonzero, linearly independent vectors in $\R^{3}$ each corresponding to an eigenfunction of $\L$ with eigenvalue $1$ by Remark \ref{rk30}.

We now consider the case that $\Omega_{1},\ldots,\Omega_{4}\subset\R^{2}$.  In this last remaining case, either $\pcon=1$ or $\pcon>1$, since $\pcon\geq1$ by Lemma \ref{lemma23}.  In the former case, we claim that for any $\epsilon>0$, there exists a three-dimensional subspace of functions such that $Q(G,G)\leq-(1-\epsilon)\langle G,G\rangle$ for every $G$ in this subspace.  To see this, first suppose $1<\pcon<\infty$.  Note that we have two eigenfunctions with eigenvalue $1$ from Remark \ref{rk30} and another Dirichlet eigenfunction $F$ of $\L$ since $\pcon>1$.  Consider the three-dimensional subspace formed by linear combinations of $F$ and $G$, where $G$ is an eigenfunction with eigenvalue $1$.  Let $a\colonequals\langle F,F\rangle^{1/2}$, $b\colonequals\langle G,G\rangle^{1/2}$.  We estimate $Q(F+G,F+G)$.  Without loss of generality, $a^{2}+b^{2}=1$.  Then
$$-Q(F+G,F+G)=\pcon a^{2}+(\pcon+1)\langle F,G\rangle+b^{2}=1+(\pcon-1)a^{2}+(\pcon+1)\langle F,G\rangle.$$
From Lemma \ref{rkorth}, for any $\epsilon>0$, we have $\abs{(\pcon+1)\langle F,G\rangle}\leq2(\pcon+1)(\pcon-1)^{-1/2}\epsilon ab$.  So,
$$-Q(F+G,F+G)\geq 1+(\pcon-1)a^{2}-2(\pcon+1)(\pcon-1)^{-1/2}\epsilon ab.$$
The function $a\mapsto(\pcon-1)a^{2}-2(\pcon+1)(\pcon-1)^{-1/2}\epsilon ab$ for $a\in[-1,1]$ has a minimum value of $-b^{2}\epsilon^{2}(\pcon+1)^{2}/(\pcon-1)^{2}$.  So, if $\epsilon$ is small enough, we have $-Q(F+G,F+G)\geq1-\epsilon$.  Meanwhile, $\langle F+G,F+G\rangle=1+2\langle F,G\rangle\leq1+2\epsilon(\pcon-1)^{-1/2}ab$.  The claim follows.  (This argument also applies to taking Dirichlet eigenfunctions in the case $\pcon=\infty$.)
%   a mapsto ca^2 +da.  derivative = 2ca+d=0. a=-d/2c
%  function is cd^2 /4c^2  -d^2 /2c = -d^2 /4c.  = (pcon+1)^2 /(pcon-1)^2

In summary, in any case, for any $\epsilon>0$ there is a three-dimensional subspace where $Q(G,G)\leq-(1-\epsilon)\langle G,G\rangle$ for all $G$ in the subspace, all such $G$ satisfy \eqref{four75}, and there are $G_{1},G_{2},G_{3}$ in this subspace such that
\begin{equation}\label{ten1}
\mathrm{span}\Big\{\Big(\int_{\redb\Omega_{1}}\sum_{j\neq 1}g_{k,1j}\gamma_{\sdimn}(x)dx,\ldots,\int_{\redb\Omega_{4}}\sum_{j\neq m}g_{k,mj}\gamma_{\sdimn}(x)dx\Big)\colon k=1,2,3\Big\}
\end{equation}
is three (which is as large as possible).  (Otherwise, there is a nonzero element $G$ of the subspace such that the vector in \eqref{ten1} is zero while $Q(G,G)<0$, violating the minimality of $\Omega_{1},\ldots,\Omega_{4}$.)

Let $m=4$.  Let $\Delta_{m}\colonequals\{a=(a_{1},\ldots,a_{m})\colon a_{1}+\cdots+a_{m}=1,\,a_{i}>0\,\,\forall\,\,1\leq i\leq m\}$.  Let $a\in\Delta_{m}$.  Let $z_{1},\ldots,z_{m}\in\R^{m-1}$ be the vertices of a regular simplex in $\R^{m-1}$ centered at the origin.  Assume that $\vnormt{z_{i}}=1$ for all $1\leq i\leq m$.  For all $1\leq i\leq m$, define
$$\Omega_{i}(a)'\colonequals y+\{x\in\R^{m-1}\colon\langle x,z_{i}\rangle=\max_{1\leq j\leq m}\langle x,z_{j}\rangle\},$$
where $y\in\R^{m-1}$ is chosen so that $\gamma_{m-1}(\Omega_{i}'(a))=a_{i}$ for all $1\leq i\leq m$.

For any $a\in\Delta_{m}$, define
\begin{flalign*}
J(a)
&\colonequals \min\{\int_{\cup_{i=1}^{m}\partial\Omega_{i}}\gamma_{m-2}(x)dx\colon \cup_{i=1}^{m}\Omega_{i}=\R^{m-1},
\,\gamma_{m-1}(\Omega_{i})=a_{i},\,\forall\,1\leq i\leq m\}\\
&\qquad-\int_{y+\cup_{i=1}^{m}\partial\Omega_{i}'(a)}\gamma_{m-2}(x)dx.
\end{flalign*}
By definition of $J$, we have $J(a)\leq0$.  Corollary \ref{cor1} is the assertion that $J(a)=0$ for all $a\in\Delta_{m}$.  We argue by contradiction.  Assume $J(a)<0$ for some $a\in\Delta_{m}$.  Further, assume that
$$J(a)=\min_{a'\in\Delta_{m}}J(a').$$
This minimum exists since $J(a')=0$ for any $a'$ with $a_{1}'+\cdots+a_{m}'=1$ and $a_{i}'=0$ for some $1\leq i\leq m$ by the known $m=3$ case of Conjecture \ref{conj0}, recalling that $m=4$ in this section.  For any $a\in\Delta_{m}$, define $I(a)$ as in Lemma \ref{lemma75}.

Let $(\Omega_{1}^{(s)},\ldots,\Omega_{m}^{(s)})_{s\in(-1,1)}$ be a smoothly varying partition of $\R^{\adimn}$ such that $(\Omega_{1}^{(0)},\ldots,\Omega_{m}^{(0)})$ minimizes Problem \ref{prob1} when $s=0$, and define $\Sigma_{ij}^{(s)}\colonequals(\redb\Omega_{i}^{(s)})\cap(\redb\Omega_{j}^{(s)})$ for any $1\leq i<j\leq m$ and for any $s\in(-1,1)$.  For any $s\in(-1,1)$, define
\begin{equation}\label{eleven0}
V(s)\colonequals\Big(\gamma_{\adimn}(\Omega_{1}^{(s)}),\ldots,\gamma_{\adimn}(\Omega_{m}^{(s)})\Big).
\end{equation}
Then $J(V(s))$ has a local minimum at $s=0$, so its first derivative is zero, and its second derivative is nonnegative, i.e.
\begin{equation}\label{eleven1}
\langle J'(V(0)),V'(0)\rangle =0=\frac{d}{ds}|_{s=0}\sum_{1\leq i<j\leq m}\int_{\redb\Sigma_{ij}^{(s)}}\gamma_{\sdimn}(x)dx-\langle I'(V(0)),V'(0)\rangle.
\end{equation}
\begin{equation}\label{eleven3}
\langle I'(V(0)),V''(0)\rangle +(V'(0))^{t}I''(V(0))V'(0)
\leq \frac{d^{2}}{ds^{2}}|_{s=0}\sum_{1\leq i<j\leq m}\int_{\Sigma_{ij}^{(s)}}\gamma_{\sdimn}(x)dx.
\end{equation}
From Remark \ref{rk27}, (using the notation from there),
\begin{equation}\label{eleven3.5}
\frac{d}{ds}|_{s=0}\sum_{1\leq i<j\leq m}\int_{\Sigma_{ij}^{(s)}}\gamma_{\sdimn}(x)dx=\sqrt{2\pi}\langle\lambda,V'(0)\rangle.
\end{equation}
By e.g. \eqref{ten1}, $V'(0)$ can be chosen to be any vector in $\Delta_{4}$, so \eqref{eleven3.5} and \eqref{eleven1} imply that $I'(V(0))=\sqrt{2\pi}\lambda$.  We can then rewrite \eqref{eleven3} as
\begin{equation}\label{eleven4}
(V'(0))^{t}I''(V(0))V'(0)
\leq \frac{d^{2}}{ds^{2}}|_{s=0}\sum_{1\leq i<j\leq m}\int_{\Sigma_{ij}^{(s)}}\gamma_{\sdimn}(x)dx-\sqrt{2\pi}\langle\lambda,V''(0)\rangle.
\end{equation}
Using Lemma \ref{lemma21}, the second part of Remark \ref{rk27} together with \eqref{quaddef}, and using any of the vector fields from \eqref{ten1} (thereby defining $F\in\mathcal{F}$ with $f_{ij}\colonequals\langle X,N_{ij}\rangle$ for any $1\leq i<j\leq m$, where $X$ satisfies \eqref{nine2.3} for $\Psi$ defined so that $\Omega_{i}^{(s)}=\Psi(\Omega_{i}^{(0)},s)$ for all $s\in(1,1)$), \eqref{eleven4} becomes
\begin{equation}\label{eleven5}
(V'(0))^{t}I''(V(0))V'(0)\leq Q(F,F).
\end{equation}
Note that \eqref{four75} holds for $F$, justifying \eqref{eleven5}.

We now apply Lemmas \ref{lemma86}, \ref{lemma87} and \ref{lemma75} to \eqref{eleven5} to get, $\forall$ $\epsilon>0$,
$$(V'(0))^{t}I''(V(0))V'(0)\leq-(1-\epsilon)(\sqrt{2\pi}V'(0))^{t}K^{-1}\sqrt{2\pi}V'(0).$$
By \eqref{ten1}, we then have $I''\leq -2\pi K^{-1}$ in the positive semidefinite sense of matrices acting on $\Delta_{m}$.  Since both matrices have the same null space, we conclude that $-I''\leq -2\pi K^{-1}$ in the positive semidefinite sense of $m\times m$ matrices.  But then
$$2I(V(0))\stackrel{\eqref{twelve1}}{=}-\frac{1}{2\pi}\mathrm{Tr}((I''(V(0)))^{-1})]\leq\mathrm{Tr}(K)\stackrel{\eqref{seven4}}{=}2\sum_{1\leq i<j\leq m}\gamma_{\sdimn}(\Sigma_{ij}).$$
That is, $J(a)\geq0$, a contradiction.
\end{proof}

\section{Lemmas from Milman-Neeman}\label{secmilman}

Below, we let $^{t}$ denote the transpose of a matrix or vector.  Also, all vectors are assumed to be column vectors.

\begin{lemma}[{\cite[Lemma 6.4]{milman18a}}]\label{lemma86}
Let $D,E$ be random vectors in $\R^{\adimn}$ such that $\E\vnorm{D}^{2}<\infty$ and $\E\vnorm{E}^{2}<\infty$.  Assume that $\E EE^{t}$ is nonsingular.  Then
$$(\E DE^{t})(\E EE^{t})^{-1}(\E ED^{t})\leq \E DD^{t}.$$
\end{lemma}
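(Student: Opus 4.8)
The plan is to realize the claimed difference $\E DD^{t}-(\E DE^{t})(\E EE^{t})^{-1}(\E ED^{t})$ as the (automatically positive semidefinite) second-moment matrix of the residual of the population linear regression of $D$ on $E$.

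First I would dispense with the integrability bookkeeping. Since $\E\vnorm{D}^{2}<\infty$ and $\E\vnorm{E}^{2}<\infty$, the Cauchy--Schwarz inequality shows that every entry of $\E DD^{t}$, $\E DE^{t}$, $\E ED^{t}$ and $\E EE^{t}$ is a finite real number, so all the matrix products appearing in the statement are well-defined; and since $\E EE^{t}$ is symmetric and nonsingular, the matrix $A\colonequals(\E EE^{t})^{-1}$ is symmetric, i.e. $A^{t}=A$. Write also $M\colonequals\E DE^{t}$, so that $M^{t}=\E ED^{t}$.

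Next I would introduce the random vector $W\colonequals D-MAE\in\R^{\adimn}$. Since $\vnorm{W}\le\vnorm{D}+\vnorm{MA}_{\mathrm{op}}\vnorm{E}$, we have $\E\vnorm{W}^{2}<\infty$, so $\E WW^{t}$ is a well-defined $(\adimn)\times(\adimn)$ matrix, and it is positive semidefinite because for every $v\in\R^{\adimn}$,
\[
v^{t}(\E WW^{t})v=\E\,\langle v,W\rangle^{2}\ge 0.
\]
Finally I would expand $\E WW^{t}$ directly, using $\E DE^{t}=M$, $\E ED^{t}=M^{t}$, $\E EE^{t}=A^{-1}$ and $A^{t}=A$:
\[
\E WW^{t}=\E DD^{t}-MAM^{t}-MAM^{t}+MA(\E EE^{t})A^{t}M^{t}=\E DD^{t}-MAM^{t}.
\]
Hence $\E DD^{t}-(\E DE^{t})(\E EE^{t})^{-1}(\E ED^{t})=\E WW^{t}\ge 0$, i.e. $(\E DE^{t})(\E EE^{t})^{-1}(\E ED^{t})\le\E DD^{t}$, which is exactly the assertion. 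There is no real obstacle here beyond keeping track of the symmetry of $A$ and the cancellation in the cross terms; the only ingredient that is not purely mechanical is the guess that the regression residual $W=D-(\E DE^{t})(\E EE^{t})^{-1}E$ is the object whose covariance produces the Schur complement.
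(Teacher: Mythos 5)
Your proof is correct: the regression-residual/Schur-complement argument with $W=D-(\E DE^{t})(\E EE^{t})^{-1}E$ is the standard proof of this fact, and the paper itself offers no proof, simply citing \cite[Lemma 6.4]{milman18a}, where essentially this same argument appears. The computation of the cross terms and the use of the symmetry of $(\E EE^{t})^{-1}$ are both handled correctly.
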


Let $u_{1},\ldots,u_{\adimn}$ denote the standard basis of $\R^{\adimn}$.  Let $\Omega_{1},\ldots,\Omega_{m}$ minimize Problem \ref{prob1}.  Define
\begin{equation}\label{seven4}
K\colonequals\sum_{1\leq i<j\leq m}\gamma_{\sdimn}(\Sigma_{ij})(u_{i}-u_{j})(u_{i}-u_{j})^{t}.
\end{equation}

\begin{lemma}\label{lemma87}
Let $\pcon\geq1$.  Let $F\in\mathcal{F}$ satisfy $Q(F,F)\leq-\pcon\langle F,F\rangle$ and \eqref{four75}.  Define $V$ by \eqref{eleven0}.  Then
$$Q(F,F)\leq -\pcon (\sqrt{2\pi}V'(0))^{t}K^{-1}(\sqrt{2\pi}V'(0)).$$
\end{lemma}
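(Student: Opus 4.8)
The plan is to reduce the statement to a single scalar inequality and then recognize that inequality as an instance of Cauchy--Schwarz in the form of Lemma \ref{lemma86}. Since $\pcon\geq 1>0$ and we are given $Q(F,F)\leq-\pcon\langle F,F\rangle$, it suffices to prove
$$\langle F,F\rangle\ \geq\ (\sqrt{2\pi}V'(0))^{t}K^{-1}(\sqrt{2\pi}V'(0)),$$
after which multiplying by $-\pcon$ and chaining with the hypothesis yields the claimed bound. Here $K$ is singular (indeed $K(1,\dots,1)^{t}=0$ since $(u_{i}-u_{j})^{t}(1,\dots,1)=0$), so $K^{-1}$ is to be read as the Moore--Penrose pseudoinverse; this is harmless because, as noted below, $\sqrt{2\pi}V'(0)$ lies in the range of $K$.

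First I would put the three quantities $\langle F,F\rangle$, $\sqrt{2\pi}V'(0)$, and $K$ into a common form. Writing $f_{ij}=\langle X,N_{ij}\rangle$, where $X$ generates the variation $\{\Omega_{i}^{(s)}\}$ defining $V$, Remark \ref{rk23} gives
$$\sqrt{2\pi}V'(0)=\sum_{1\leq i<j\leq m}(u_{i}-u_{j})\int_{\Sigma_{ij}}f_{ij}(x)\gamma_{\sdimn}(x)dx,$$
while by Definition \ref{def2}, $\langle F,F\rangle=\sum_{1\leq i<j\leq m}\int_{\Sigma_{ij}}f_{ij}^{2}\gamma_{\sdimn}(x)dx$, and by \eqref{seven4}, $K=\sum_{1\leq i<j\leq m}\gamma_{\sdimn}(\Sigma_{ij})(u_{i}-u_{j})(u_{i}-u_{j})^{t}$. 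The terms of $\sqrt{2\pi}V'(0)$ indexed by pairs with $\gamma_{\sdimn}(\Sigma_{ij})=0$ vanish, so $\sqrt{2\pi}V'(0)$ is a linear combination of the $u_{i}-u_{j}$ with $\gamma_{\sdimn}(\Sigma_{ij})>0$; these span $\mathrm{range}(K)$, so $\sqrt{2\pi}V'(0)\in\mathrm{range}(K)$ and the displayed expression is unambiguous.

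Next I would introduce a probability space so that Lemma \ref{lemma86} applies. Let $\mu\colonequals\sum_{1\leq i<j\leq m}\gamma_{\sdimn}(\Sigma_{ij})$, finite because $\Omega_{1},\dots,\Omega_{m}$ minimize Problem \ref{prob1} and $\mu$ is the (finite) minimal Gaussian surface area. Equip $\Sigma\colonequals\cup_{1\leq i<j\leq m}\Sigma_{ij}$ with the probability measure $\P\colonequals\gamma_{\sdimn}/\mu$ (pairwise intersections of the pieces being $\gamma_{\sdimn}$-null). For $x\in\Sigma_{i(x)j(x)}$ with $i(x)<j(x)$, set $D(x)\colonequals f_{i(x)j(x)}(x)\in\R$ and $E(x)\colonequals u_{i(x)}-u_{j(x)}\in\R^{m}$. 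Then $\E[D^{2}]=\mu^{-1}\langle F,F\rangle<\infty$, $\E\vnorm{E}^{2}=2<\infty$, $\E[DE^{t}]=\mu^{-1}(\sqrt{2\pi}V'(0))^{t}$, and $\E[EE^{t}]=\mu^{-1}K$. Since $E$ takes values in $\mathrm{range}(K)$ and spans it, the restriction of $\E[EE^{t}]$ to $\mathrm{range}(K)$ is nonsingular; identifying $\mathrm{range}(K)$ with a Euclidean space via an orthonormal basis and applying Lemma \ref{lemma86} to $(D,E)$ restricted to this subspace gives $(\E[DE^{t}])(\E[EE^{t}])^{-1}(\E[ED^{t}])\leq\E[DD^{t}]$, i.e. $\mu^{-1}(\sqrt{2\pi}V'(0))^{t}K^{-1}(\sqrt{2\pi}V'(0))\leq\mu^{-1}\langle F,F\rangle$. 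Cancelling $\mu^{-1}$ yields the inequality from the first paragraph, hence the lemma.

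The one genuinely delicate point is the degeneracy of $K$ (equivalently of $\E[EE^{t}]$): it is never invertible on all of $\R^{m}$, so Lemma \ref{lemma86} cannot be invoked directly and one must restrict to $\mathrm{range}(K)$, checking that $\sqrt{2\pi}V'(0)$ lies there so that $K^{-1}(\sqrt{2\pi}V'(0))$ is well-defined. (When the adjacency graph on $\{1,\dots,m\}$ with edges $\{ij:\gamma_{\sdimn}(\Sigma_{ij})>0\}$ is connected, $\mathrm{range}(K)$ is the full hyperplane $\{v:\sum_{i}v_{i}=0\}$, which contains $V'(0)$ since $\sum_{i}\gamma_{\adimn}(\Omega_{i}^{(s)})\equiv 1$; but this is not needed.) Everything else is bookkeeping: matching $\E[DD^{t}],\E[DE^{t}],\E[EE^{t}]$ with $\langle F,F\rangle$, $\sqrt{2\pi}V'(0)$, and $K$ via Remark \ref{rk23}.
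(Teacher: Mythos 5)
Your proof is correct, and it reaches the same key lemma (Lemma \ref{lemma86}) as the paper, but by a cleaner route. The paper takes $D$ to be the $\R^{\adimn}$-valued averaged normal $\overline{N}_{ij}\colonequals\E_{ij}(N_{ij}|X)$ and $E=u_i-u_j$, which forces an intermediate conditional Jensen step ($\E_{ij}\langle X,N_{ij}\rangle^2\geq[\langle X,\E_{ij}(N_{ij}|X)\rangle]^2$) before the matrix Cauchy--Schwarz can be contracted against $X$ on both sides; only afterwards does it identify $\gamma_{\sdimn}(\Sigma)\,\E MX$ with $\sqrt{2\pi}V'(0)$ via Remark \ref{rk23}. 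You instead take $D$ to be the scalar $f_{ij}=\langle X,N_{ij}\rangle$ directly, so $\E[DD^t]=\mu^{-1}\langle F,F\rangle$, $\E[DE^t]=\mu^{-1}(\sqrt{2\pi}V'(0))^t$ and $\E[EE^t]=\mu^{-1}K$ on the nose, and a single application of Lemma \ref{lemma86} gives $(\sqrt{2\pi}V'(0))^tK^{-1}(\sqrt{2\pi}V'(0))\leq\langle F,F\rangle$; chaining with the hypothesis $Q(F,F)\leq-\pcon\langle F,F\rangle$ finishes. This avoids Jensen entirely (the information Jensen discards is not needed for the stated bound). You are also more careful than the paper on one genuine point: $K$ annihilates $(1,\ldots,1)^t$ and is never invertible on $\R^m$, so Lemma \ref{lemma86} cannot be applied verbatim; your restriction to $\mathrm{range}(K)$, together with the observation that $\sqrt{2\pi}V'(0)$ lies in that range (so the pseudoinverse expression is unambiguous), is exactly the fix the paper leaves implicit. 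The only cosmetic caveat is that Lemma \ref{lemma86} is stated for vectors in $\R^{\adimn}$ of equal dimension, whereas you apply it with a scalar $D$ and a vector $E$ of a different dimension; since the lemma is just the Schur-complement form of Cauchy--Schwarz, this generalization is immediate, but it is worth a sentence acknowledging it.
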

\begin{proof}
For any $1\leq i<j\leq m$, define $\E_{ij}f_{ij}\colonequals\int_{\Sigma_{ij}}f_{ij}\gamma_{\sdimn}(x)dx/\int_{\Sigma_{ij}}\gamma_{\sdimn}(x)dx$ and define
$$\overline{N}_{ij}\colonequals \E_{ij}(N_{ij}|X),\qquad S\colonequals\sum_{1\leq i<j\leq m}\overline{N}_{ij}\overline{N}_{ij}^{t}.$$
Define $D,E$ so that $D\colonequals \overline{N}_{ij}$ and $E\colonequals u_{i}-u_{j}$ on $\Sigma_{ij}$, for all $1\leq i<j\leq m$.  Define
\begin{equation}\label{seven4p}
M\colonequals\sum_{1\leq i<j\leq m}\gamma_{\sdimn}(\Sigma_{ij})(u_{i}-u_{j})\overline{N}_{ij}^{t}.
\end{equation}
Then by Definition \ref{quaddef} and \eqref{four75},
\begin{flalign*}
Q(F,F)
&\leq-\pcon\sum_{1\leq i<j\leq m}\int_{\Sigma_{ij}}f_{ij}^{2}\gamma_{\sdimn}(x)dx
=-\pcon\sum_{1\leq i<j\leq m}\frac{\int_{\Sigma_{ij}}\langle X,N_{ij}\rangle^{2}\gamma_{\sdimn}(x)dx}{\gamma_{\sdimn}(\Sigma_{ij})}\gamma_{\sdimn}(\Sigma_{ij})\\
&=-\pcon\sum_{1\leq i<j\leq m}\E_{ij}\langle X,N_{ij}\rangle^{2}\gamma_{\sdimn}(\Sigma_{ij})
=-\pcon\sum_{1\leq i<j\leq m}\E_{ij}\E_{ij}(\langle X,N_{ij}\rangle^{2}|X)\gamma_{\sdimn}(\Sigma_{ij})\\
\end{flalign*}
So, using the conditional Jensen inequality,
\begin{equation}\label{seven7}
\begin{aligned}
&Q(F,F)\\
&\leq-\pcon\sum_{1\leq i<j\leq m}\E_{ij}[\E_{ij}(\langle X,N_{ij}\rangle|X)]^{2}\gamma_{\sdimn}(\Sigma_{ij})
=-\pcon\sum_{1\leq i<j\leq m}\E_{ij}[\langle X,\E_{ij}(N_{ij}|X)\rangle]^{2}\gamma_{\sdimn}(\Sigma_{ij})\\
&=-\pcon\sum_{1\leq i<j\leq m}\E_{ij}X^{t}\overline{N}_{ij}\overline{N}_{ij}^{t}X\gamma_{\sdimn}(\Sigma_{ij})
=-\pcon\E X^{t}DD^{t}X\gamma_{\sdimn}(\Sigma).
\end{aligned}
\end{equation}
For any $g\colon\Sigma\to\R$, we defined $\E g\colonequals\int_{\Sigma}g(x)\gamma_{\sdimn}(x)dx/\gamma_{\sdimn}(\Sigma)$.  Note that
$$ \E (ED^{t})=\E (ED^{t}|X)=\frac{1}{\gamma_{\sdimn}(\Sigma)}\E(M|X),\qquad
\E (EE^{t})=\E (EE^{t}|X)=\frac{1}{\gamma_{\sdimn}(\Sigma)}K.$$
So, from Lemma \ref{lemma86},
$$-\E (DD^{t}|X)\leq-(\E (M|X))^{t}K^{-1}(\E (M|X))\gamma_{\sdimn}(\Sigma).$$
Multiplying by $X^{t}$ on the left and $X$ on the right, then taking $\E$ of both sides,
$$-\E (X^{t}DD^{t}X)\leq-(\E MX)^{t}K^{-1}(\E MX)\gamma_{\sdimn}(\Sigma).$$
Consequently,
\begin{equation}\label{seven8}
Q(F,F)\stackrel{\eqref{seven7}}{\leq}
 -\pcon (\E MX)^{t}K^{-1}(\E MX)\gamma_{\sdimn}(\Sigma)^{2}.
\end{equation}
Note now that
\begin{flalign*}
&\gamma_{\sdimn}(\Sigma)\E MX\\
&=\sum_{1\leq i<j\leq m}\gamma_{\sdimn}(\Sigma_{ij})(u_{i}-u_{j})\E_{ij} \overline{N}_{ij}^{t}X
=\sum_{1\leq i<j\leq m}\gamma_{\sdimn}(\Sigma_{ij})(u_{i}-u_{j})\E_{ij} \langle\E_{ij}(N_{ij}\rangle|X),X\rangle\\
&=\sum_{1\leq i<j\leq m}\gamma_{\sdimn}(\Sigma_{ij})(u_{i}-u_{j})\E_{ij} \E_{ij}(\langle N_{ij},X\rangle|X)
=\sum_{1\leq i<j\leq m}\gamma_{\sdimn}(\Sigma_{ij})(u_{i}-u_{j})\E_{ij} \langle N_{ij},X\rangle.
\end{flalign*}
Now use Remark \ref{rk23} to see that $\sqrt{2\pi}V'(0)=\gamma_{\sdimn}(\Sigma)\E MX$, so \eqref{seven8} concludes the proof.
\end{proof}

\begin{lemma}[\embolden{Differentiation Formula}, {\cite[Proposition 2.6]{milman18a}}]\label{lemma75}
Let $z_{1},\ldots,z_{m}\in\R^{\adimn}$ be the vertices of a regular simplex in $\R^{\adimn}$ centered at the origin.  For all $1\leq i\leq m$, define
$$\widetilde{\Omega}_{i}\colonequals \{x\in\R^{\adimn}\colon\langle x,z_{i}\rangle=\max_{1\leq j\leq m}\langle x,z_{j}\rangle\}.$$
For any $y\in\R^{\adimn}$, define
$$B(y)\colonequals\int_{y+\cup_{i=1}^{m}\partial\widetilde{\Omega}_{i}}\gamma_{\sdimn}(x)dx.$$
$y=y(a)$ such that $\gamma_{\adimn}(\widetilde{\Omega_{i}})=a_{i}>0$, $\forall$ $1\leq i\leq m$, $\forall$ $a=(a_{1},\ldots,a_{m})\in\R^{m}$ with $\sum_{i=1}^{m}a_{i}=1$, and define $I(a)\colonequals B(y(a))$.  Then for all $b=(b_{1},\ldots,b_{m})\in\R^{m}$ with $\sum_{i=1}^{m}b_{i}=0$, and letting $\overline{\nabla},\overline{\Delta}$ denote the gradient and Laplacian on $\R^{\adimn}$, respectively,
$$\overline{\nabla}_{b} I(a)=\sqrt{\pi}(a_{1}^{-1},\ldots,a_{m}^{-1})b$$
\begin{equation}\label{twelve1}
\overline{\nabla}_{b}\overline{\nabla}_{b} I(a)=-2\pi b\widetilde{K}^{-1}b.
\end{equation}
Here $\widetilde{K}\colonequals\sum_{1\leq i<j\leq m}\gamma_{\sdimn}((\partial\widetilde{\Omega_{i}})\cap(\partial\widetilde{\Omega_{j}}))(u_{i}-u_{j})(u_{i}-u_{j})^{t}$.
\end{lemma}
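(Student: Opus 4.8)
The plan is to reduce everything to explicit Gaussian computations on the model cluster $\{y+\widetilde\Omega_i\}_{i=1}^m$, differentiating its surface area $B(y)$ as the translation $y$ varies and converting $y$-derivatives into $a$-derivatives by the chain rule. The first step is to set up $y(a)$: I would show that the map $\Phi\colon y\mapsto(\gamma_{\adimn}(y+\widetilde\Omega_1),\dots,\gamma_{\adimn}(y+\widetilde\Omega_m))$ is a $C^\infty$ diffeomorphism from $\R^{\adimn}$ (equivalently, from the span of $z_1,\dots,z_m$) onto the relative interior of $\{a:\sum_i a_i=1\}$. By the inverse function theorem this needs the differential of $\Phi$ to be surjective onto $\{b:\sum_i b_i=0\}$ at every $y$, which follows from the first-variation formula below together with the fact that the unit normals $N_{ij}=(z_j-z_i)/\vnorm{z_j-z_i}$ are pairwise non-collinear; properness of $\Phi$ (as $\vnorm{y}\to\infty$ some $a_i\to0$) then upgrades the local diffeomorphism to a global one and defines $y(a)$.

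The two infinitesimal identities I need are both instances of the first-variation calculus under a translation $y\mapsto y+tv$. For the volumes: since each interface $y+\widetilde\Sigma_{ij}$ is an $\sdimn$-dimensional piece of the hyperplane $\{z_i-z_j\}^\perp$ with \emph{constant} unit normal $N_{ij}$ (pointing from $\widetilde\Omega_i$ into $\widetilde\Omega_j$), the divergence theorem (the translation case of Lemma \ref{lemma10}; cf. Remark \ref{rk23}) yields $\frac{d}{dt}|_{t=0}\gamma_{\adimn}(y+tv+\widetilde\Omega_i)=\sum_{j\ne i}\langle v,N_{ij}\rangle\int_{y+\widetilde\Sigma_{ij}}\gamma_{\adimn}(x)\,dx$. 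For the surface area $B(y)=\sum_{i<j}\int_{y+\widetilde\Sigma_{ij}}\gamma_{\sdimn}(x)\,dx$: on each interface split $v=v^{N}+v^{T}$ into its components normal and tangential to $\{z_i-z_j\}^\perp$. Because the interface is flat ($H=0$), the $v^{T}$-contribution integrates by parts to a conormal integral over the $(\sdimn-1)$-dimensional triple junctions, and summing over $i<j$ these cancel by the balancing relation $\nu_{ij}+\nu_{jk}+\nu_{ki}=0$ of Lemma \ref{lemma24}; the $v^{N}$-contribution equals (a constant multiple of) $\langle v,z_i-z_j\rangle\langle y,z_i-z_j\rangle$ times $\int_{y+\widetilde\Sigma_{ij}}\gamma_{\sdimn}$, using that $\gamma_{\sdimn}$ restricted to the translated hyperplane factors as $e^{-h_{ij}^2/2}$ times a fixed Gaussian on $\{z_i-z_j\}^\perp$, with $h_{ij}=\langle y,z_i-z_j\rangle/\vnorm{z_i-z_j}$.

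Combining these via the chain rule $\overline\nabla_b I(a)=\frac{d}{dt}|_{t=0}B(y(a+tb))$ — i.e.\ solving $D\Phi(y)[v]=b$ and substituting — gives the first formula after a telescoping rearrangement that uses the antisymmetry $N_{ij}=-N_{ji}$ and the fact that, since $\sum_i b_i=0$, both sides are insensitive to the $(1,\dots,1)$ direction; this rearrangement converts the $\langle y(a),z_i\rangle$ weights into $a_i^{-1}$ weights by way of the explicit relation between $y(a)$ and $a$ coming from the Gaussian integrals, and it is there that the constant $\sqrt\pi$ appears. (Equivalently: for any cluster $\partial_{a_i}(\text{surface area})=\sqrt{2\pi}\lambda_i$ — as used in the proof of Corollary \ref{cor2} — and for the model the first-variation condition $0=H_{ij}=\langle x,N_{ij}\rangle+\lambda_{ij}$ on the flat interface pins down $\lambda_{ij}$; the content is then the identification with $a_i^{-1}$.) The second formula follows by differentiating the first once more, equivalently by computing $\overline\nabla^2 I=(Dy)^{t}\,\overline\nabla^2 B\,(Dy)+(\text{terms in }\overline\nabla B\text{ and }\overline\nabla^2 y)$ and simplifying: the matrix $\widetilde K=\sum_{i<j}\gamma_{\sdimn}(\widetilde\Sigma_{ij})(u_i-u_j)(u_i-u_j)^{t}$ is, up to constants, precisely the Gram-type matrix through which $D\Phi(y)$ is inverted, so $\widetilde K^{-1}$ emerges. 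At the barycentre $a=(1/m,\dots,1/m)$, where $y=0$, the whole computation is transparent by symmetry, and the general $a$ is reached by transporting it along $y(a)$.

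The main obstacle is the second differentiation: the cross-sections $\widetilde\Sigma_{ij}$, regarded inside $\{z_i-z_j\}^\perp$, move with $y$ through their remaining $(m-2)$ facets, so one cannot treat them as rigid when differentiating $\overline\nabla_v B(y)$ again; doing this carefully, and keeping track of the powers of $2\pi$ produced by the mismatch between the volume weight $\gamma_{\adimn}$ and the area weight $\gamma_{\sdimn}$, is the delicate bookkeeping that turns the above outline into the stated $\sqrt\pi$ and $-2\pi$. The conceptual ingredients — translation variations, the $120^\circ$/balancing condition, and the Gaussian product structure of the model — are exactly as displayed; for the precise execution I would follow \cite{milman18a}.
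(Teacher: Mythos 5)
This lemma is not proved in the paper at all: it is imported verbatim as \cite[Proposition 2.6]{milman18a}, and Section \ref{secmilman} offers no argument for it. So there is no internal proof to compare your proposal against; the paper's ``proof'' is the citation, and your proposal, which ends by deferring ``the precise execution'' to the same reference, is in effect an annotated version of that citation rather than an independent proof.

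Judged as a standalone argument, the proposal has two genuine gaps. First, the heart of the lemma is the second-derivative identity \eqref{twelve1} --- it is the only part actually used downstream (in the proof of Corollary \ref{cor2}, to compare $I''$ with $-2\pi K^{-1}$) --- and you explicitly do not carry it out: you correctly identify the difficulty (the cross-sections $\widetilde\Sigma_{ij}$ are not rigid under translation of $y$, since their $(\sdimn-1)$-dimensional facets move), but then label the resolution ``delicate bookkeeping'' and point to \cite{milman18a}. That bookkeeping is the lemma. Second, your route to the first-derivative formula is asserted rather than derived. The translation first-variation gives, per interface, the weight $-\langle y(a),N_{ij}\rangle=\lambda_{ij}$ (consistent with Lemma \ref{lemma24} and Remark \ref{rk27}, since $H_{ij}=0$ on the flat interfaces), so the natural output of your computation is $\overline{\nabla}_b I=\sqrt{2\pi}\langle\lambda,b\rangle$ in Lagrange-multiplier form. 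The passage from $\langle y(a),z_i\rangle$-weights to the stated $a_i^{-1}$-weights with the constant $\sqrt{\pi}$ is exactly where the analytic content of the formula lives (it requires the explicit dependence of $y(a)$ on $a$ through the Gaussian volume integrals, which is not of the form $\langle y,z_i\rangle\propto a_i^{-1}$), and you give no computation for it. The surrounding scaffolding --- the inverse-function-theorem construction of $y(a)$, the cancellation of the conormal boundary terms via $\nu_{ij}+\nu_{jk}+\nu_{ki}=0$, the appearance of $\widetilde K$ as the Gram matrix inverting $D\Phi$ --- is all sensible and matches how such a proof must go, but neither displayed identity is actually established.
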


\section{Comments on More than Four Sets}\label{secconc}

It would be desirable to remove our need for Assumption \ref{as2} in the cases $m=3,4$ of Conjecture \ref{conj0}.  Moreover, it would be nice to extend our arguments to the case $m>4$ of Conjecture \ref{conj0}.  If $\Omega_{1},\ldots,\Omega_{m}$ minimize Problem \ref{prob1} and the boundaries of the sets are not all flat, it follows from Remark \ref{rk30} and the reasoning of Lemma \ref{lemma33} that there is a subspace of eigenfunctions of the second variation operator $\L$ from \eqref{seven00} of dimension $m-1+\adimn=\sdimn+m$ with positive eigenvalues.  As discussed in the introduction, this fact alone should be sufficient to solve Conjecture \ref{conj0}, since more eigenfunctions of $\L$ means more control on the structure of $\Omega_{1},\ldots,\Omega_{m}$.  However, it could be the case that the quadratic form $Q$ from \eqref{quaddef} is negative definition on most of these $\sdimn+m$ eigenfunctions, contrary to our intuition.  (Recall that $Q$ came from Lemmas \ref{lemma28} and \ref{lemma29.9}.)  The main issue is that we have no a priori control on the second term in \eqref{quaddef} for these eigenfunctions of $\L$.  One might think that a Sobolev trace inequality could control this second term of \eqref{quaddef}, but then it seems we would need e.g. a bounded geometry condition on $\Omega_{1},\ldots,\Omega_{m}$ to proceed further.

\medskip
\noindent\textbf{Acknowledgement}.  Thanks to David Galvin, Russ Lyons, Yury Makarychev, Emanuel Milman and Elchanan Mossel for helpful discussions.

\bibliographystyle{amsalpha}
\bibliography{12162011}

\newcommand{\etalchar}[1]{$^{#1}$}
\def\polhk#1{\setbox0=\hbox{#1}{\ooalign{\hidewidth
  \lower1.5ex\hbox{`}\hidewidth\crcr\unhbox0}}} \def\cprime{$'$}
  \def\cprime{$'$}
\providecommand{\bysame}{\leavevmode\hbox to3em{\hrulefill}\thinspace}
\providecommand{\MR}{\relax\ifhmode\unskip\space\fi MR }
% \MRhref is called by the amsart/book/proc definition of \MR.
\providecommand{\MRhref}[2]{%
  \href{http://www.ams.org/mathscinet-getitem?mr=#1}{#2}
}
\providecommand{\href}[2]{#2}
\begin{thebibliography}{KKMO07}

\bibitem[Alm76]{almgren76}
F.~J. Almgren, Jr., \emph{Existence and regularity almost everywhere of
  solutions to elliptic variational problems with constraints}, Mem. Amer.
  Math. Soc. \textbf{4} (1976), no.~165, viii+199. \MR{0420406}

\bibitem[BBJ17]{barchiesi16}
Marco Barchiesi, Alessio Brancolini, and Vesa Julin, \emph{Sharp dimension free
  quantitative estimates for the {G}aussian isoperimetric inequality}, Ann.
  Probab. \textbf{45} (2017), no.~2, 668--697. \MR{3630285}

\bibitem[BdC84]{barbosa84}
Jo{\~a}o~Lucas Barbosa and Manfredo do~Carmo, \emph{Stability of hypersurfaces
  with constant mean curvature}, Math. Z. \textbf{185} (1984), no.~3, 339--353.
  \MR{731682 (85k:58021c)}

\bibitem[Bob97]{bobkov97}
S.~G. Bobkov, \emph{An isoperimetric inequality on the discrete cube, and an
  elementary proof of the isoperimetric inequality in gauss space}, Ann.
  Probab. \textbf{25} (1997), no.~1, 206--214.

\bibitem[Bor85]{borell85}
Christer Borell, \emph{Geometric bounds on the {O}rnstein-{U}hlenbeck velocity
  process}, Z. Wahrsch. Verw. Gebiete \textbf{70} (1985), no.~1, 1--13.
  \MR{795785 (87k:60103)}

\bibitem[Bor03]{borell03}
\bysame, \emph{The {E}hrhard inequality}, C. R. Math. Acad. Sci. Paris
  \textbf{337} (2003), no.~10, 663--666. \MR{2030108 (2004k:60102)}

\bibitem[BS01]{burchard01}
A.~Burchard and M.~Schmuckenschl�ger, \emph{Comparison theorems for exit
  times}, Geometric \& Functional Analysis GAFA \textbf{11} (2001), no.~4,
  651--692 (English).

\bibitem[CCH{\etalchar{+}}08]{corneli08}
J.~Corneli, I.~Corwin, S.~Hurder, V.~Sesum, Y.~Xu, E.~Adams, D.~Davis, M.~Lee,
  R.~Visocchi, and N.~Hoffman, \emph{Double bubbles in {G}auss space and
  spheres}, Houston J. Math. \textbf{34} (2008), no.~1, 181--204. \MR{2383703
  (2009e:53007)}

\bibitem[CES17]{colombo17}
Maria Colombo, Nick Edelen, and Luca Spolaor, \emph{The singular set of minimal
  surfaces near polyhedral cones}, Preprint,
  \href{https://arxiv.org/abs/1709.09957}{arXiv:1709.09957}, 2017.

\bibitem[CIMW13]{colding12}
Tobias~Holck Colding, Tom Ilmanen, William~P. Minicozzi, II, and Brian White,
  \emph{The round sphere minimizes entropy among closed self-shrinkers}, J.
  Differential Geom. \textbf{95} (2013), no.~1, 53--69. \MR{3128979}

\bibitem[CL12]{cicalese12}
Marco Cicalese and Gian~Paolo Leonardi, \emph{A selection principle for the
  sharp quantitative isoperimetric inequality}, Arch. Ration. Mech. Anal.
  \textbf{206} (2012), no.~2, 617--643. \MR{2980529}

\bibitem[CM12]{colding12a}
Tobias~H. Colding and William~P. Minicozzi, II, \emph{Generic mean curvature
  flow {I}: generic singularities}, Ann. of Math. (2) \textbf{175} (2012),
  no.~2, 755--833. \MR{2993752}

\bibitem[COW16]{cheng16}
Qing-Ming Cheng, Shiho Ogata, and Guoxin Wei, \emph{Rigidity theorems of
  {$\lambda$}-hypersurfaces}, Comm. Anal. Geom. \textbf{24} (2016), no.~1,
  45--58. \MR{3514553}

\bibitem[CS07]{chokski07}
Rustum Choksi and Peter Sternberg, \emph{On the first and second variations of
  a nonlocal isoperimetric problem}, J. Reine Angew. Math. \textbf{611} (2007),
  75--108. \MR{2360604 (2008j:49062)}

\bibitem[CW14]{cheng14}
Qing-Ming Cheng and Guoxin Wei, \emph{The gauss image of
  $\lambda$-hypersurfaces and a bernstein type problem}, Preprint,
  \href{http://arxiv.org/abs/1410.5302}{arXiv:1410.5302}, 2014.

\bibitem[CW18]{cheng15}
Qing-Ming Cheng and Guoxin Wei, \emph{Complete {$\lambda$}-hypersurfaces of
  weighted volume-preserving mean curvature flow}, Calc. Var. Partial
  Differential Equations \textbf{57} (2018), no.~2, Art. 32, 21. \MR{3763110}

\bibitem[Eld15]{eldan13}
Ronen Eldan, \emph{A two-sided estimate for the gaussian noise stability
  deficit}, Inventiones mathematicae \textbf{201} (2015), no.~2, 561--624
  (English).

\bibitem[FP13]{feo13}
Filomena Feo and Maria~Rosaria Posteraro, \emph{Logarithmic sobolev trace
  inequalities}, Asian J. Math. \textbf{17} (2013), no.~3, 569--582.

\bibitem[Gua18]{guang15}
Qiang Guang, \emph{Gap and rigidity theorems of {$\lambda$}-hypersurfaces},
  Proc. Amer. Math. Soc. \textbf{146} (2018), no.~10, 4459--4471. \MR{3834671}

\bibitem[Hei15]{heilman15}
Steven Heilman, \emph{Low correlation noise stability of symmetric sets}, to
  appear, Journal of Theoretical Probability. Preprint,
  \href{http://arxiv.org/abs/1511.00382}{arXiv:1511.00382}, 2015.

\bibitem[Hei17]{heilman17}
\bysame, \emph{Symmetric convex sets with minimal {G}aussian surface area}, to
  appear, American Journal of Mathematics. Preprint,
  \href{https://arxiv.org/abs/1705.06643}{arXiv:1705.06643}., 2017.

\bibitem[HMRR02]{hutchings02}
Michael Hutchings, Frank Morgan, Manuel Ritor{\'e}, and Antonio Ros,
  \emph{Proof of the double bubble conjecture}, Ann. of Math. (2) \textbf{155}
  (2002), no.~2, 459--489. \MR{1906593 (2003c:53013)}

\bibitem[Hut97]{hutchings97}
Michael Hutchings, \emph{The structure of area-minimizing double bubbles}, J.
  Geom. Anal. \textbf{7} (1997), no.~2, 285--304. \MR{1646776}

\bibitem[IM12]{isaksson11}
Marcus Isaksson and Elchanan Mossel, \emph{Maximally stable {G}aussian
  partitions with discrete applications}, Israel J. Math. \textbf{189} (2012),
  347--396. \MR{2931402}

\bibitem[KKMO07]{khot07}
Subhash Khot, Guy Kindler, Elchanan Mossel, and Ryan O'Donnell, \emph{Optimal
  inapproximability results for {MAX}-{CUT} and other 2-variable {CSP}s?}, SIAM
  J. Comput. \textbf{37} (2007), no.~1, 319--357. \MR{2306295 (2008d:68035)}

\bibitem[Led94]{ledoux94}
Michel Ledoux, \emph{Semigroup proofs of the isoperimetric inequality in
  {E}uclidean and {G}auss space}, Bull. Sci. Math. \textbf{118} (1994), no.~6,
  485--510. \MR{1309086 (96c:49061)}

\bibitem[Led96]{ledoux96}
\bysame, \emph{Isoperimetry and {G}aussian analysis}, Lectures on probability
  theory and statistics ({S}aint-{F}lour, 1994), Lecture Notes in Math., vol.
  1648, Springer, Berlin, 1996, pp.~165--294. \MR{1600888 (99h:60002)}

\bibitem[Lee03]{lee03}
John~M. Lee, \emph{Introduction to smooth manifolds}, Graduate Texts in
  Mathematics, vol. 218, Springer-Verlag, New York, 2003. \MR{1930091
  (2003k:58001)}

\bibitem[MN15a]{mossel15}
Elchanan Mossel and Joe Neeman, \emph{Robust dimension free isoperimetry in
  gaussian space}, The Annals of Probability \textbf{43} (2015), no.~3,
  971--991.

\bibitem[MN15b]{mossel12}
\bysame, \emph{Robust optimality of {G}aussian noise stability}, J. Eur. Math.
  Soc. (JEMS) \textbf{17} (2015), no.~2, 433--482. \MR{3317748}

\bibitem[MN18a]{milman18a}
Emanuel Milman and Joe Neeman, \emph{The {G}aussian double-bubble conjecture},
  Preprint, \href{https://arxiv.org/abs/1801.09296}{arXiv:1801.09296}, 2018.

\bibitem[MN18b]{milman18b}
\bysame, \emph{The {G}aussian multi-bubble conjecture}, Preprint,
  \href{https://arxiv.org/abs/1805.10961}{arXiv:1805.10961}, 2018.

\bibitem[MOO10]{mossel10}
Elchanan Mossel, Ryan O'Donnell, and Krzysztof Oleszkiewicz, \emph{Noise
  stability of functions with low influences: invariance and optimality}, Ann.
  of Math. (2) \textbf{171} (2010), no.~1, 295--341. \MR{2630040 (2012a:60091)}

\bibitem[MR15]{mcgonagle15}
Matthew McGonagle and John Ross, \emph{The hyperplane is the only stable,
  smooth solution to the isoperimetric problem in {G}aussian space}, Geom.
  Dedicata \textbf{178} (2015), 277--296. \MR{3397495}

\bibitem[Rei08]{reichardt08}
BenW. Reichardt, \emph{Proof of the double bubble conjecture in
  $\mathbb{R}^{n}$}, Journal of Geometric Analysis \textbf{18} (2008), no.~1,
  172--191 (English).

\bibitem[SC74]{sudakov74}
V.~N. Sudakov and B.~S. Cirel{\cprime}son, \emph{Extremal properties of
  half-spaces for spherically invariant measures}, Zap. Nau\v cn. Sem.
  Leningrad. Otdel. Mat. Inst. Steklov. (LOMI) \textbf{41} (1974), 14--24, 165,
  Problems in the theory of probability distributions, II. \MR{0365680 (51
  \#1932)}

\bibitem[Sim68]{simons68}
James Simons, \emph{Minimal varieties in riemannian manifolds}, Ann. of Math.
  (2) \textbf{88} (1968), 62--105. \MR{0233295}

\bibitem[Wic04]{wichiramala04}
Wacharin Wichiramala, \emph{Proof of the planar triple bubble conjecture}, J.
  Reine Angew. Math. \textbf{567} (2004), 1--49. \MR{2038304}

\bibitem[Zhu20]{zhu16}
Jonathan~J. Zhu, \emph{On the entropy of closed hypersurfaces and singular
  self-shrinkers}, J. Differential Geom. \textbf{114} (2020), no.~3, 551--593.
  \MR{4072205}

\end{thebibliography}

\end{document}